\newcommand{\numberseries}{\bfseries}   %Fontseries used for numbering
\newlength{\thmtopspace}                %Space above theorem
\newlength{\thmbotspace}                %Space below theorem
\newlength{\thmheadspace}               %Space after theorem label
\newlength{\thmindent}                  %For indenting
\newtheoremstyle{fixed bf head,slanted body}
                {\thmtopspace}{\thmbotspace}{\slshape}
                {\thmindent}{\bfseries}{.}{\thmheadspace}
                {{\numberseries \thmnumber{#2\;}}\thmname{#1}\thmnote{ (#3)}}
\newtheoremstyle{variable bf head,slanted body}
                {\thmtopspace}{\thmbotspace}{\slshape}
                {\thmindent}{\bfseries}{.}{\thmheadspace}
                {{\numberseries \thmnumber{#2\;}}\thmname{#1}\thmnote{ #3}}
\newtheoremstyle{fixed bf head,upright body}
                {\thmtopspace}{\thmbotspace}{\upshape}
                {\thmindent}{\bfseries}{.}{\thmheadspace}
                {{\numberseries \thmnumber{#2\;}}\thmname{#1}\thmnote{ (#3)}}
\newtheoremstyle{numbered paragraph}
                {\thmtopspace}{\thmbotspace}{\upshape}
                {\thmindent}{\upshape}{}{\thmheadspace}
                {{\numberseries \thmnumber{#2.}}}
\theoremstyle{fixed bf head,slanted body}
\newtheorem{res}{}[section]
\newtheorem{thm}[res]{Theorem}          \newtheorem*{thm*}{Theorem}
\newtheorem{prp}[res]{Proposition}      \newtheorem*{prp*}{Proposition}
\newtheorem{cor}[res]{Corollary}        \newtheorem*{cor*}{Corollary}
\newtheorem{lem}[res]{Lemma}            \newtheorem*{lem*}{Lemma}
\theoremstyle{variable bf head,slanted body}
     \newtheorem*{introthm*}{Theorem}
   \newtheorem*{introcor*}{Corollary}
\theoremstyle{fixed bf head,upright body}
            \newtheorem*{stp*}{Setup}
\newtheorem{dfn}[res]{Definition}       \newtheorem*{dfn*}{Definition}
     \newtheorem*{con*}{Construction}
      \newtheorem*{obs*}{Observation}
\newtheorem{rmk}[res]{Remark}           \newtheorem*{rmk*}{Remark}
\newtheorem{exa}[res]{Example}          \newtheorem*{exa*}{Example}
\newtheorem{qst}[res]{Question}         \newtheorem*{qst*}{Question}
\theoremstyle{numbered paragraph}
\newtheorem{ipg}[res]{}
\newlength{\thmlistleft}        %leftmargin
\newlength{\thmlistright}       %rightmargin
\newlength{\thmlistpartopsep}   %partopsep
\newlength{\thmlisttopsep}      %topsep
\newlength{\thmlistparsep}      %parsep
\newlength{\thmlistitemsep}     %itemsep
\newcounter{eqc} 
\newenvironment{eqc}{\begin{list}{\upshape (\textit{\roman{eqc}})}%
    {\usecounter{eqc}%
      \setlength{\leftmargin}{\thmlistleft}%
      \setlength{\labelwidth}{\thmlistleft}%
      \setlength{\rightmargin}{\thmlistright}%
      \setlength{\partopsep}{\thmlistpartopsep}%
      \setlength{\topsep}{\thmlisttopsep}%
      \setlength{\parsep}{\thmlistparsep}%
      \setlength{\itemsep}{\thmlistitemsep}}}%
  {\end{list}}%
\newcounter{prt}
\newenvironment{prt}{\begin{list}{\upshape (\alph{prt})}%
    {\usecounter{prt}%
      \setlength{\leftmargin}{\thmlistleft}%
      \setlength{\labelwidth}{\thmlistleft}%
      \setlength{\rightmargin}{\thmlistright}%
      \setlength{\partopsep}{\thmlistpartopsep}%
      \setlength{\topsep}{\thmlisttopsep}%
      \setlength{\parsep}{\thmlistparsep}%
      \setlength{\itemsep}{\thmlistitemsep}}}%
  {\end{list}}%
  \newcommand{\proofoftag}[2][:]{(#2)#1}
\newcommand{\pgref}[1]{\ref{#1}}
\newcommand{\thmref}[2][Theorem~]{#1\pgref{thm:#2}}
\newcommand{\corref}[2][Corollary~]{#1\pgref{cor:#2}}
\newcommand{\prpref}[2][Proposition~]{#1\pgref{prp:#2}}
\newcommand{\lemref}[2][Lemma~]{#1\pgref{lem:#2}}
\newcommand{\dfnref}[2][Definition~]{#1\pgref{dfn:#2}}
\newcommand{\rmkref}[2][Remark~]{#1\pgref{rmk:#2}}
\newcommand{\secref}[2][Section~]{#1\ref{sec:#2}}
\renewcommand{\eqref}[1]{(\pgref{eq:#1})}
\def\@nobreak@#1{\mathchoice%
  {\nobreakdef@\displaystyle\f@size{#1}}%
  {\nobreakdef@\nobreakstyle\tf@size{\firstchoice@false #1}}%
  {\nobreakdef@\nobreakstyle\sf@size{\firstchoice@false #1}}%
  {\nobreakdef@\nobreakstyle\ssf@size{\firstchoice@false #1}}%
  \check@mathfonts}%
\def\nobreakdef@#1#2#3{\hbox{{%
                    \everymath{#1}%
                    \let\f@size#2\selectfont%
                    #3}}}%
\DeclareFontFamily{T1}{cmex}{}
\DeclareFontShape{T1}{cmex}{m}{n}{<-> s * [0.89] cmex10}{}
\DeclareSymbolFont{cmlargesymbols}{T1}{cmex}{m}{n}
\DeclareMathSymbol{\mycoprod}{\mathop}{cmlargesymbols}{"60} \let\coprod\mycoprod
\DeclareMathSymbol{\myprod}{\mathop}{cmlargesymbols}{"51} \let\prod\myprod
\DeclareSymbolFont{usualmathcal}{OMS}{cmsy}{m}{n}
\DeclareSymbolFontAlphabet{\mathcal}{usualmathcal}
\DeclareSymbolFont{letters}{OML}{txmi}{m}{it}
\DeclareMathSymbol{\alpha}{\mathord}{letters}{"0B}
\DeclareMathSymbol{\beta}{\mathord}{letters}{"0C}
\DeclareMathSymbol{\gamma}{\mathord}{letters}{"0D}
\DeclareMathSymbol{\delta}{\mathord}{letters}{"0E}
\DeclareMathSymbol{\epsilon}{\mathord}{letters}{"0F}
\DeclareMathSymbol{\zeta}{\mathord}{letters}{"10}
\DeclareMathSymbol{\eta}{\mathord}{letters}{"11}
\DeclareMathSymbol{\theta}{\mathord}{letters}{"12}
\DeclareMathSymbol{\iota}{\mathord}{letters}{"13}
\DeclareMathSymbol{\kappa}{\mathord}{letters}{"14}
\DeclareMathSymbol{\lambda}{\mathord}{letters}{"15}
\DeclareMathSymbol{\mu}{\mathord}{letters}{"16}
\DeclareMathSymbol{\nu}{\mathord}{letters}{"17}
\DeclareMathSymbol{\xi}{\mathord}{letters}{"18}
\DeclareMathSymbol{\pi}{\mathord}{letters}{"19}
\DeclareMathSymbol{\rho}{\mathord}{letters}{"1A}
\DeclareMathSymbol{\sigma}{\mathord}{letters}{"1B}
\DeclareMathSymbol{\tau}{\mathord}{letters}{"1C}
\DeclareMathSymbol{\upsilon}{\mathord}{letters}{"1D}
\DeclareMathSymbol{\phi}{\mathord}{letters}{"1E}
\DeclareMathSymbol{\chi}{\mathord}{letters}{"1F}
\DeclareMathSymbol{\psi}{\mathord}{letters}{"20}
\DeclareMathSymbol{\omega}{\mathord}{letters}{"21}
\DeclareMathSymbol{\varepsilon}{\mathord}{letters}{"22}
\DeclareMathSymbol{\vartheta}{\mathord}{letters}{"23}
\DeclareMathSymbol{\varpi}{\mathord}{letters}{"24}
\DeclareMathSymbol{\varrho}{\mathord}{letters}{"25}
\DeclareMathSymbol{\varsigma}{\mathord}{letters}{"26}
\DeclareMathSymbol{\varphi}{\mathord}{letters}{"27}
\DeclareMathSymbol{\Gamma}{\mathord}{letters}{"00}
\DeclareMathSymbol{\Delta}{\mathord}{letters}{"01}
\DeclareMathSymbol{\Theta}{\mathord}{letters}{"02}
\DeclareMathSymbol{\Lambda}{\mathord}{letters}{"03}
\DeclareMathSymbol{\Xi}{\mathord}{letters}{"04}
\DeclareMathSymbol{\Pi}{\mathord}{letters}{"05}
\DeclareMathSymbol{\Sigma}{\mathord}{letters}{"06}
\DeclareMathSymbol{\Upsilon}{\mathord}{letters}{"07}
\DeclareMathSymbol{\Phi}{\mathord}{letters}{"08}
\DeclareMathSymbol{\Psi}{\mathord}{letters}{"09}
\DeclareMathSymbol{\Omega}{\mathord}{letters}{"0A}
\DeclareMathSymbol{\upGamma}{\mathalpha}{operators}{"00}
\DeclareMathSymbol{\upDelta}{\mathalpha}{operators}{"01}
\DeclareMathSymbol{\upTheta}{\mathalpha}{operators}{"02}
\DeclareMathSymbol{\upLambda}{\mathalpha}{operators}{"03}
\DeclareMathSymbol{\upXi}{\mathalpha}{operators}{"04}
\DeclareMathSymbol{\upPi}{\mathalpha}{operators}{"05}
\DeclareMathSymbol{\upSigma}{\mathalpha}{operators}{"06}
\DeclareMathSymbol{\upUpsilon}{\mathalpha}{operators}{"07}
\DeclareMathSymbol{\upPhi}{\mathalpha}{operators}{"08}
\DeclareMathSymbol{\upPsi}{\mathalpha}{operators}{"09}
\DeclareMathSymbol{\upOmega}{\mathalpha}{operators}{"0A}
\newcommand{\cA}{\mathcal{A}}
\newcommand{\cB}{\mathcal{B}}
\newcommand{\cC}{\mathcal{C}}
\newcommand{\cD}{\mathcal{D}}
\newcommand{\cM}{\mathcal{M}}
\newcommand{\e}{\mathsf{e}}
\newcommand{\s}{\mathsf{s}}
\newcommand{\f}{\mathsf{f}}
\newcommand{\g}{\mathsf{g}}
\renewcommand{\c}{\mathsf{c}}
\renewcommand{\k}{\mathsf{k}}
\newcommand{\sou}[1]{s(#1)}
\newcommand{\tar}[1]{t(#1)}
\newcommand{\arrs}[1]{Q_1^{#1\to*}}
\newcommand{\arrt}[1]{Q_1^{*\to#1}}
\newcommand{\Rep}[2]{\operatorname{Rep}(#1,#2)}
\newcommand{\Hom}[3]{\operatorname{Hom}_{#1}(#2,#3)}
\newcommand{\Ext}[4]{\operatorname{Ext}_{#1}^{#2}(#3,#4)}
\newcommand{\Ker}[1]{\operatorname{Ker}#1}
\newcommand{\Coker}[1]{\operatorname{Coker}#1}
\newcommand{\Proj}[1]{\mathrm{Prj}\mspace{2.5mu}#1}
\newcommand{\GProj}[1]{\mathrm{GPrj}\mspace{2.5mu}#1}
\newcommand{\Inj}[1]{\mathrm{Inj}\mspace{2.5mu}#1}
\newcommand{\GInj}[1]{\mathrm{GInj}\mspace{2.5mu}#1}
\begin{document}

\title{Cotorsion pairs in categories of quiver representations}

\author{Henrik Holm \ }
\address{Department of Mathematical Sciences, Universitetsparken 5, University of Copenhagen, 2100 Copenhagen {\O}, Denmark} 
\email{holm@math.ku.dk}
\urladdr{http://www.math.ku.dk/\~{}holm/}

\author{ \ Peter J{\o}rgensen}
\address{School of Mathematics and Statistics, Newcastle University, Newcastle upon Tyne NE1 7RU, United Kingdom} 
\email{peter.jorgensen@ncl.ac.uk}
\urladdr{http://www.staff.ncl.ac.uk/peter.jorgensen}

%\date{\today}

\keywords{Abelian categories; adjoint functors; cotorsion pairs; quivers; quiver representations.}

\subjclass[2010]{18E10, 18G05, 18G15}

%18 Category theory; homological algebra  
%18E10 Exact categories, abelian categories 
%18G05 Projectives and injectives  
%18G15 Ext and Tor, generalizations, Künneth formula  

\begin{abstract}
  We study the category $\Rep{Q}{\cM}$ of representations of a quiver $Q$ with values in an abelian category $\cM$. Under certain assumptions, we show that every cotorsion pair $(\cA,\cB)$ in $\cM$ induces two (explicitly described) cotorsion pairs $(\upPhi(\cA),\Rep{Q}{\cB})$ and $(\Rep{Q}{\cA},\upPsi(\cB))$ in $\Rep{Q}{\cM}$. This is akin to a result by Gillespie, which asserts that a cotorsion pair $(\cA,\cB)$ in $\cM$ induces cotorsion pairs \smash{$(\widetilde{\cA}, \mathrm{dg}\mspace{1mu} \widetilde{\cB})$} and \smash{$(\mathrm{dg}\mspace{1mu}\widetilde{\cA}, \widetilde{\cB})$} in the category $\mathrm{Ch}(\cM)$ of chain complexes in $\cM$. Special cases of our results recover descriptions of the projective and injective objects in $\Rep{Q}{\cM}$ proved by Enochs, Estrada, and Garc{\'{\i}}a~Rozas.   
\end{abstract}

\maketitle

%$\alpha \beta \gamma \delta \epsilon \varepsilon \zeta \eta \theta \vartheta \iota \kappa \lambda \mu \nu o \pi \varpi \rho \varrho \sigma \varsigma \tau \upsilon \phi \varphi \chi \psi \omega$

\vspace*{-2.5ex}

\section{Introduction}
\label{sec:Introduction}

The traditional study of quiver representations is often restricted to  representations with values in the category of modules over a ring (or even in the category of finite dimensional vector spaces over a field). In this paper, we study the category $\Rep{Q}{\cM}$ of $\cM$-valued re\-presentations
of a quiver $Q$ where $\cM$ is an abelian category, and we are interested in how homological properties (here we focus on cotorsion pairs) in $\cM$ carry~over~to~$\Rep{Q}{\cM}$. We extend results from the literature about module-valued quiver representations to general $\cM$-valued representations, but we also prove results about the category $\Rep{Q}{\cM}$ which are new even in the case where $\cM$ is a module category. Our main results, Theorems A and B below, are akin to \cite[Cor.~3.8]{Gillespie} by Gillespie, where it is shown that every cotorsion pair $(\cA,\cB)$ in an abelian category $\cM$ with enough projectives and injectives induces two cotorsion pairs \smash{$(\widetilde{\cA}, \mathrm{dg}\mspace{1mu} \widetilde{\cB})$} and \smash{$(\mathrm{dg}\mspace{1mu}\widetilde{\cA}, \widetilde{\cB})$} in the category $\mathrm{Ch}(\cM)$ of chain complexes in $\cM$; see also \cite{Gillespie07}.

Besides the obvious gain of generality, there is another advantage to working with general $\cM$-valued representations: While it is not true that the opposite of a module category is a module category, it is true that the opposite of an abelian category is abelian. This fact, together with observations like $\Rep{Q^\mathrm{op}}{\cM^\mathrm{op}} = \Rep{Q}{\cM}^\mathrm{op}$ where $Q^\mathrm{op}$ is the opposite quiver of $Q$, makes it easy to dualize results about quiver representations and, in a sense, cuts the work in half. For example, one way to prove Theorem~B below is by applying Theorem~A directly to the opposite quiver $Q^\mathrm{op}$ and the opposite category $\cM^\mathrm{op}$.

We now explain the mathematical content of this paper in more detail. Our work is~mo\-ti\-vated by a series of results about module-valued quiver representations. To explain them, we first need to introduce some notation. For every $i \in Q_0$ (where $Q_0$ denotes the set of~vertices in $Q$) and every $\cM$-valued representation $X$ of $Q$ there are two canonical morphisms,
\begin{displaymath}
  \bigoplus_{a \colon\mspace{-3mu} j\mspace{1mu}\to\mspace{1mu} i} X(j) \stackrel{\varphi^X_i}{\longrightarrow} X(i)
  \qquad \text{and} \qquad
  X(i) \stackrel{\psi^X_i}{\longrightarrow} \prod_{a \colon\mspace{-2mu} i\mspace{1mu}\to j} X(j)\;,    
\end{displaymath}
where the coproduct, respectively, product, is taken over all arrows in $Q$ whose target,~re\-spec\-tively, source, is the vertex $i$. In the following results from the literature, a  ``representation'' means a representation with values in the category of (left) modules over any~ring. 
\begin{prt}

\item[$\bullet$] Enochs and Estrada characterize in \cite[Thm.~3.1]{EEESE05} (2005) the projective representations of a left rooted quiver\footnote{\,The left rooted quivers, which are defined \ref{left-rooted}, constitute quite a large class of quivers.} $Q$. They are exactly the representations $X$ for which the module $X(i)$ is projective and $\varphi^X_i$ is a split monomorphism for every $i \in Q_0$.

\item[$\bullet$] Enochs, Oyonarte, and Torrecillas characterize in \cite[Thm.~3.7]{EOT04} (2004) the flat representations of a left rooted quiver $Q$. They are exactly the representations $X$ for which the module $X(i)$ is flat and $\varphi^X_i$ is a pure monomorphism for every $i \in Q_0$.

\item[$\bullet$] Eshraghi, Hafezi, and Salarian characterize in \cite[Thm.~3.5.1(b)]{EshraghiHafeziSalarian} (2013) the Gorenstein projective representations of a left rooted quiver $Q$. They are exactly the representations $X$ for which the module $X(i)$ is Gorenstein projective and $\varphi^X_i$ is a mono\-morphism with Gorenstein projective cokernel for every $i \in Q_0$.

\end{prt}
As the reader may notice, all these results follow the same pattern. Indeed, if we for a class $\cA$ of objects in $\cM$ define a class $\upPhi(\cA)$ of objects in $\Rep{Q}{\cM}$ by
  \begin{align*}
    \upPhi(\cA) = \left\{\!
    X \in \Rep{Q}{\cM} \left|\!\!\!
    \begin{array}{ll}
      \text{$\varphi^X_i$ is a monomorphism and} \\
      \text{$X(i), \Coker{\varphi^X_i} \in \cA$ for all $i \in Q_0$} 
    \end{array}
    \right.
    \!\!\!\!\!\right\},
  \end{align*}
then the results in \cite{EEESE05,EOT04,EshraghiHafeziSalarian} mentioned above say that if $Q$ is left rooted and $\cA$ is the class of projective, flat, or Gorenstein projective objects in a module category $\cM$, then $\upPhi(\cA)$ is exactly the class of projective, flat, or Gorenstein projective objects in $\Rep{Q}{\cM}$. This indicates that---at least if $Q$ is left rooted---it could be the case  that $\upPhi(\cA)$ will inherit any ``good'' properties which the class $\cA$ might have. Here we study the relationship between $\cA$ and  $\upPhi(\cA)$ from a more abstract point of view. Our focus is on cotorsion pairs, and we prove that if $\cA$ is the left half of a cotorsion pair in $\cM$, and if $Q$ is left rooted, then $\upPhi(\cA)$ will be the left half of a cotorsion pair in $\Rep{Q}{\cM}$. More precisely:

\begin{introthm*}[A]
Let $Q$ be a left rooted quiver and let $\cM$ be an abelian category that satisfies \textnormal{AB4} and \textnormal{AB4*} and which has enough projectives and injectives. If $(\cA,\cB)$ is a cotorsion pair in $\cM$, then there is a cotorsion pair $(\upPhi(\cA),\Rep{Q}{\cB})$ in $\Rep{Q}{\cM}$ where $\upPhi(\cA)$ is defined as above and
  \begin{align*}
    \Rep{Q}{\cB} &= \big\{\,Y \in \Rep{Q}{\cM} \,\big|\, \text{$Y(i) \in \cB$ for all $i \in Q_0$} \big\}\,.    
  \end{align*}
  Moreover, if $(\cA,\cB)$ is hereditary or generated by a set, then so is $(\upPhi(\cA),\Rep{Q}{\cB})$.
\end{introthm*}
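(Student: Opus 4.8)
The plan is to reduce everything to the base category $\cM$ by means of the evaluation functors $e_i\colon\Rep{Q}{\cM}\to\cM$, $X\mapsto X(i)$, for $i\in Q_0$. Since $\cM$ satisfies \textnormal{AB4} and \textnormal{AB4*}, each $e_i$ has an exact left adjoint $f_i$, with $f_i(M)(j)=\bigoplus_{p\colon i\to j}M$ (the coproduct over the paths $i\to j$), and an exact right adjoint $g_i$, with $g_i(M)(j)=\prod_{p\colon j\to i}M$; hence $\Rep{Q}{\cM}$ has enough projectives and injectives, and for every $n$ there are natural isomorphisms $\Ext{\Rep{Q}{\cM}}{n}{f_i(M)}{Y}\cong\Ext{\cM}{n}{M}{Y(i)}$ and $\Ext{\Rep{Q}{\cM}}{n}{X}{g_i(N)}\cong\Ext{\cM}{n}{X(i)}{N}$. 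I will use three facts: (a) $f_i(A)\in\upPhi(\cA)$ for $A\in\cA$, because $f_i(A)(j)$ is a coproduct of copies of $A$ (and $\cA$, being ${}^{\perp}\cB$, is closed under coproducts) while $\varphi^{f_i(A)}_j$ is a split monomorphism with cokernel $A$ (for $j=i$) or $0$; (b) $\Ext{\cM}{1}{\cA}{\cB}=0$, and $\cB\supseteq\Inj{\cM}$; and (c) every $X$ fits into a \emph{standard short exact sequence}
\[
  0\longrightarrow\bigoplus_{a\in Q_1}f_{t(a)}\bigl(X(s(a))\bigr)\longrightarrow\bigoplus_{i\in Q_0}f_i\bigl(X(i)\bigr)\longrightarrow X\longrightarrow 0,
\]
exactness being a vertexwise verification that uses only that $Q$ has no oriented cycles — which left-rootedness forces. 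When $X\in\upPhi(\cA)$, the two left-hand terms are coproducts of representations $f_i(A)$ with $A\in\cA$.

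The crux is $\Ext{\Rep{Q}{\cM}}{1}{X}{Y}=0$ for $X\in\upPhi(\cA)$ and $Y\in\Rep{Q}{\cB}$. Applying $\Hom{\Rep{Q}{\cM}}{-}{Y}$ to the standard sequence, and noting $\Ext{\Rep{Q}{\cM}}{1}{\bigoplus_i f_i(X(i))}{Y}\cong\prod_i\Ext{\cM}{1}{X(i)}{Y(i)}=0$ by (b), identifies $\Ext{\Rep{Q}{\cM}}{1}{X}{Y}$ with $\operatorname{coker}d^1$, where $d^1\colon\prod_{i\in Q_0}\Hom{\cM}{X(i)}{Y(i)}\to\prod_{a\in Q_1}\Hom{\cM}{X(s(a))}{Y(t(a))}$ sends $(h_i)_i$ to $(h_{t(a)}X(a)-Y(a)h_{s(a)})_a$; its kernel is $\Hom{\Rep{Q}{\cM}}{X}{Y}$, and the same description holds over every full subquiver $Q_W$ with $W$ predecessor-closed (i.e. $j\to i$ and $i\in W$ imply $j\in W$). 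I would then prove $\operatorname{coker}d^1_{Q_W}=0$ by transfinite induction along the left-rooted filtration $\emptyset=W_0\subseteq W_1\subseteq\cdots$ with $\bigcup_\alpha W_\alpha=Q_0$. At a successor $W_{\alpha+1}=W_\alpha\cup U_\alpha$ — where every arrow into a vertex of $U_\alpha$ starts in $W_\alpha$ and there are no arrows inside $U_\alpha$ — the two-term complexes sit in a short exact sequence whose quotient is the complex for $W_\alpha$ and whose kernel has first cohomology $\prod_{i\in U_\alpha}\operatorname{coker}\!\bigl(\Hom{\cM}{X(i)}{Y(i)}\xrightarrow{(\varphi^X_i)^{*}}\Hom{\cM}{\bigoplus_{a\colon j\to i}X(j)}{Y(i)}\bigr)$; since $\varphi^X_i$ is monic with $\Coker{\varphi^X_i}\in\cA$, that term embeds into $\prod_{i\in U_\alpha}\Ext{\cM}{1}{\Coker{\varphi^X_i}}{Y(i)}=0$, so the induction propagates. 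At a limit ordinal the complexes form a tower with surjective transition maps whose inverse limit is the complex for $W_\lambda$, so the Milnor $\varprojlim^1$ sequence reduces the claim to $\varprojlim^1$-acyclicity of the tower $\bigl\{\Hom{\Rep{Q_{W_\mu}}{\cM}}{X}{Y}\bigr\}_\mu$ of restricted Hom groups; this tower has surjective transition maps, because a morphism over $Q_{W_\mu}$ extends over the next layer precisely when the obstructions in the vanishing groups $\Ext{\cM}{1}{\Coker{\varphi^X_i}}{Y(i)}$ vanish.

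This vanishing supplies one half of each orthogonality relation. For the others: if $\Ext{\Rep{Q}{\cM}}{1}{X}{Y}=0$ for all $X\in\upPhi(\cA)$, then testing $X=f_i(A)$ with $A\in\cA$ gives $\Ext{\cM}{1}{A}{Y(i)}=0$, so $Y(i)\in\cB$ and $Y\in\Rep{Q}{\cB}$; thus $\Rep{Q}{\cB}=\upPhi(\cA)^{\perp}$. Conversely, suppose $\Ext{\Rep{Q}{\cM}}{1}{X}{Y}=0$ for all $Y\in\Rep{Q}{\cB}$. Testing $Y=g_i(B)$ — which lies in $\Rep{Q}{\cB}$ since $\cB$ is closed under products — gives $\Ext{\cM}{1}{X(i)}{B}=0$, hence $X(i)\in\cA$; testing against the stalk representation $S_i(B)$, concentrated at $i$ with all arrows zero (again in $\Rep{Q}{\cB}$), controls $\varphi^X_i$. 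Here $S_i$ has the right-exact left adjoint $c_i=\Coker{\varphi^{(-)}_i}$, which carries projectives of $\Rep{Q}{\cM}$ to projectives of $\cM$ and satisfies $L_1c_i\cong\Ker{\varphi^{(-)}_i}$ and $L_{\ge 2}c_i=0$, so there is a five-term exact sequence
\[
  0\to\Ext{\cM}{1}{\Coker{\varphi^X_i}}{B}\to\Ext{\Rep{Q}{\cM}}{1}{X}{S_i(B)}\to\Hom{\cM}{\Ker{\varphi^X_i}}{B}\to\Ext{\cM}{2}{\Coker{\varphi^X_i}}{B}
\]
whose middle term vanishing forces $\Coker{\varphi^X_i}\in\cA$ and $\Hom{\cM}{\Ker{\varphi^X_i}}{B}=0$ for all $B\in\cB$, whence $\Ker{\varphi^X_i}=0$ because $\cB\supseteq\Inj{\cM}$ and $\cM$ has enough injectives. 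So $X\in\upPhi(\cA)$, giving $\upPhi(\cA)={}^{\perp}\Rep{Q}{\cB}$; together with the previous paragraph, $(\upPhi(\cA),\Rep{Q}{\cB})$ is a cotorsion pair.

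For the final sentence of the theorem: if $(\cA,\cB)$ is hereditary, then $\Ext{\cM}{n}{\cA}{\cB}=0$ for all $n\ge 1$, and feeding the standard short exact sequence (its two nontrivial terms lying in $\bigoplus f_i(\cA)$) into the long exact sequence yields $\Ext{\Rep{Q}{\cM}}{n}{\upPhi(\cA)}{\Rep{Q}{\cB}}=0$ for $n\ge 2$, i.e. the induced pair is hereditary; and if $(\cA,\cB)$ is generated by a set $\cS\subseteq\cA$, then the set $\{f_i(S)\mid i\in Q_0,\ S\in\cS\}\subseteq\upPhi(\cA)$ generates $(\upPhi(\cA),\Rep{Q}{\cB})$, since $\Ext{\Rep{Q}{\cM}}{1}{f_i(S)}{Y}\cong\Ext{\cM}{1}{S}{Y(i)}$ gives $\{f_i(S)\}^{\perp}=\Rep{Q}{\cB}$. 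I expect the main obstacle to be the transfinite induction of the second paragraph — concretely, the passage through limit ordinals, where a $\varprojlim^1$ term must be killed; this is exactly where one needs $\Coker{\varphi^X_i}\in\cA$ rather than merely $X(i)\in\cA$, since that is what makes partial morphisms extend. A subsidiary technical point is the identification $L_1c_i\cong\Ker{\varphi^{(-)}_i}$ and the resulting five-term sequence used to extract the monomorphism condition.
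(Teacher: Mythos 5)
Your approach is genuinely different from the paper's. The paper first establishes two cotorsion pairs separately — $({}^\perp\Rep{Q}{\cB},\Rep{Q}{\cB})$, generated by $\f_*(\cA)$, and $(\upPhi(\cA),\upPhi(\cA)^\perp)$, cogenerated by $\s_*(\cB)$ — and then proves they coincide when $Q$ is left rooted by building, for each $Y\in\Rep{Q}{\cB}$, a $\upPhi(\cA)^\perp$-cofiltration along the vertex filtration $\{V_\alpha\}$ whose layers are products of stalks $\s_i(Y(i))$, and invoking a dual Eklof/Trlifaj lemma. You instead resolve $X$ by the ``standard'' two-step sequence $0\to\bigoplus_a f_{t(a)}(X(s(a)))\to\bigoplus_i f_i(X(i))\to X\to 0$, compute $\Ext^1(X,Y)$ as the cokernel of the induced two-term Hom complex, and kill that cokernel by a transfinite lifting along the same filtration. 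This is a cleaner and more ``bar-resolution-like'' route, closer to what one sees for chain complexes, and it avoids having to introduce Trlifaj's lemma. It also makes visible at once where $\Coker\varphi^X_i\in\cA$ is used: exactly in factoring a partial lift through $\varphi^X_i$ at each successor stage.

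There are, however, two places where your argument as written does not quite close. First, at limit ordinals you invoke ``Milnor $\varprojlim^1$-acyclicity'' of a surjective tower. For $\omega$-indexed towers surjectivity of the transition maps does imply $\varprojlim^1=0$, but for towers indexed by a limit ordinal of uncountable cofinality this fails in general, so the inference ``surjective transitions $\Rightarrow$ $\varprojlim^1$-acyclic'' is not available. Happily the machinery is unnecessary: because each vertex $i$ enters the filtration at a unique successor stage and, by \corref{no-arrow}, all arrows into $i$ start at earlier stages, you can define the lift $(h_i)_{i\in Q_0}$ by a \emph{monotone} transfinite recursion — at the successor stage you solve the factoring problem through $\varphi^X_i$ using $\Ext{\cM}{1}{\Coker\varphi^X_i}{Y(i)}=0$, never revising earlier $h_j$, and at a limit stage nothing new happens since $V_\lambda=\bigcup_{\mu<\lambda}V_\mu$. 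This replaces the $\varprojlim^1$ step entirely. Second, in the five-term exact sequence
\[
0\to\Ext{\cM}{1}{\Coker{\varphi^X_i}}{B}\to\Ext{\Rep{Q}{\cM}}{1}{X}{\s_i(B)}\to\Hom{\cM}{\Ker{\varphi^X_i}}{B}\to\Ext{\cM}{2}{\Coker{\varphi^X_i}}{B}
\]
the vanishing of the middle term does \emph{not} by itself yield $\Hom{\cM}{\Ker{\varphi^X_i}}{B}=0$ for all $B\in\cB$ — it only gives an injection into $\Ext{\cM}{2}{\Coker{\varphi^X_i}}{B}$, which is not assumed zero since $(\cA,\cB)$ need not be hereditary. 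The fix is to specialise to $B=I$ injective (legal since $\Inj{\cM}\subseteq\cB$), for which $\Ext{\cM}{2}{\Coker{\varphi^X_i}}{I}=0$ automatically; then $\Hom{\cM}{\Ker{\varphi^X_i}}{I}=0$ for all injectives $I$, and enough injectives force $\Ker{\varphi^X_i}=0$. This matches what the paper extracts more directly from the explicit extension $0\to\s_i(M)\to C\to X\to 0$ in \pgref{cone} and \prpref{varphi-is-mono}. With these two repairs your route is correct and self-contained; the remaining items (exactness of the standard sequence, the perpendicular computations via $f_i(A)$ and $g_i(B)$, heredity via the long exact sequence, generation by $\{f_i(S)\}$) are sound.
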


\noindent
For the trivial cotorsion pair $(\cA,\cB)=(\Proj{\cM},\cM)$ one has $\Rep{Q}{\cB}=\Rep{Q}{\cM}$, and we get from Theorem~A that the class of projective objects in $\Rep{Q}{\cM}$ is precisely
  \begin{align*}
    \Proj{(\Rep{Q}{\cM})} =
    \upPhi(\Proj{\cM}) = \left\{\!
    X \in \Rep{Q}{\cM} \left|\!\!\!  
    \begin{array}{ll}
      \text{$\varphi^X_i$ is a split monomorphism and} \\
      \text{$X(i) \in \Proj{\cM}$ for all $i \in Q_0$} 
    \end{array}
    \right.
    \!\!\!\!\!\right\}\!.
  \end{align*}
This recovers the result by Enochs and Estrada \cite{EEESE05} mentioned above when $\cM$ is a module  category. We also establish the following dual version of Theorem~A.

\begin{introthm*}[B]
Let $Q$ be a right rooted quiver and let $\cM$ be an abelian category that satisfies \textnormal{AB4} and \textnormal{AB4*} and which has enough projectives and injectives. If $(\cA,\cB)$ is a cotorsion pair in $\cM$, then there is a cotorsion pair $(\Rep{Q}{\cA},\upPsi(\cB))$ in $\Rep{Q}{\cM}$ where
  \begin{align*}
    \Rep{Q}{\cA} &= \big\{X \in \Rep{Q}{\cM} \,\big|\, \text{$X(i) \in \cA$ for all $i \in Q_0$} \big\} \quad \text{and}
    \\   
    \upPsi(\cB) &= 
    \left\{\!
    Y \in \Rep{Q}{\cM} \left|\!\!\!  
    \begin{array}{ll}
      \text{$\psi^Y_i$ is an epimorphism and} \\
      \text{$Y(i),\Ker{\psi^Y_i} \in \cB$ for all $i \in Q_0$} 
    \end{array}
    \right.
    \!\!\!\!\!\right\}.    
  \end{align*}
Moreover, if $(\cA,\cB)$ is hereditary or cogenerated by a set, then so is  $(\Rep{Q}{\cA},\upPsi(\cB))$.  
\end{introthm*}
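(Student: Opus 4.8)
The plan is to obtain Theorem~B from Theorem~A by passing to the opposite quiver $Q^{\mathrm{op}}$ and the opposite category $\cM^{\mathrm{op}}$, using the identification $\Rep{Q^{\mathrm{op}}}{\cM^{\mathrm{op}}} = \Rep{Q}{\cM}^{\mathrm{op}}$ noted in the introduction. First I would check that the hypotheses of Theorem~A hold in the opposite setting. If $Q$ is right rooted then $Q^{\mathrm{op}}$ is left rooted, since these notions are defined dually to one another. The conditions AB4 and AB4* are interchanged under $(-)^{\mathrm{op}}$, so $\cM^{\mathrm{op}}$ satisfies both of them because $\cM$ does; and $\cM^{\mathrm{op}}$ has enough projectives and injectives because $\cM$ does, by duality. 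Finally, a cotorsion pair in an abelian category is the same as a cotorsion pair in the opposite category with its two halves interchanged (using the natural isomorphism $\operatorname{Ext}^1_{\cM^{\mathrm{op}}}(X,Y)\cong\operatorname{Ext}^1_{\cM}(Y,X)$), so $(\cB,\cA)$ is a cotorsion pair in $\cM^{\mathrm{op}}$; moreover it is hereditary, respectively generated by a set, exactly when $(\cA,\cB)$ is hereditary, respectively cogenerated by a set.

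Applying Theorem~A to $Q^{\mathrm{op}}$, $\cM^{\mathrm{op}}$ and $(\cB,\cA)$ then produces a cotorsion pair $(\upPhi'(\cB),\Rep{Q^{\mathrm{op}}}{\cA})$ in $\Rep{Q^{\mathrm{op}}}{\cM^{\mathrm{op}}} = \Rep{Q}{\cM}^{\mathrm{op}}$, where $\upPhi'$ is the $\upPhi$-construction performed inside $\Rep{Q^{\mathrm{op}}}{\cM^{\mathrm{op}}}$. A cotorsion pair in $\Rep{Q}{\cM}^{\mathrm{op}}$ is the same thing as a cotorsion pair in $\Rep{Q}{\cM}$ with the two halves swapped, so this says that $(\Rep{Q^{\mathrm{op}}}{\cA},\upPhi'(\cB))$ is a cotorsion pair in $\Rep{Q}{\cM}$. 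Since $\Rep{Q^{\mathrm{op}}}{\cM^{\mathrm{op}}}$ and $\Rep{Q}{\cM}$ have the same objects and evaluation at vertices is unchanged, $\Rep{Q^{\mathrm{op}}}{\cA}$ coincides with $\Rep{Q}{\cA}$.

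The one step requiring genuine attention is the identification $\upPhi'(\cB)=\upPsi(\cB)$. For a representation $X$ and a vertex $i$, the canonical morphism $\varphi^X_i$ formed for $Q^{\mathrm{op}}$ and $\cM^{\mathrm{op}}$ is a morphism into $X(i)$ from the coproduct, taken in $\cM^{\mathrm{op}}$, over the arrows of $Q^{\mathrm{op}}$ with target $i$, i.e.\ the arrows $a\colon i\to j$ of $Q$; since a coproduct in $\cM^{\mathrm{op}}$ is a product in $\cM$ and a morphism into $X(i)$ in $\cM^{\mathrm{op}}$ is a morphism out of $X(i)$ in $\cM$, this is precisely $\psi^X_i$. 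Hence ``$\varphi^X_i$ is a monomorphism in $\cM^{\mathrm{op}}$'' becomes ``$\psi^X_i$ is an epimorphism in $\cM$'', $\Coker{\varphi^X_i}$ in $\cM^{\mathrm{op}}$ becomes $\Ker{\psi^X_i}$ in $\cM$, and membership in $\cB$ is unchanged by passing to $\cM^{\mathrm{op}}$; therefore $\upPhi'(\cB)=\upPsi(\cB)$ and the cotorsion pair obtained above is $(\Rep{Q}{\cA},\upPsi(\cB))$. The ``moreover'' clause then follows because Theorem~A carries the ``hereditary'' and ``generated by a set'' properties of $(\cB,\cA)$ over to the induced pair in $\Rep{Q^{\mathrm{op}}}{\cM^{\mathrm{op}}}$, and these dualize back to ``hereditary'' and ``cogenerated by a set'' for $(\Rep{Q}{\cA},\upPsi(\cB))$. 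I expect the main (though modest) obstacle to be purely organizational: keeping the op-dualizations of $\varphi$ and $\psi$, of AB4 and AB4*, and of ``generated/cogenerated by a set'' consistent throughout.
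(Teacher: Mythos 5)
Your proposal is correct, and it is in essence the approach the paper itself endorses: the introduction explicitly states that one way to prove Theorem~B is to apply Theorem~A to $Q^{\mathrm{op}}$ and $\cM^{\mathrm{op}}$, and the paper's formal proof of Theorem~B simply cites the (b)-parts of Propositions~\pgref{prp:values}, \pgref{prp:hereditary}, Theorem~\pgref{thm:4xcotorsion}, and Remark~\pgref{rmk:generated}, each of which was itself established by the very duality you invoke. Your careful tracking of how $\varphi^X_i$ dualizes to $\psi^X_i$, how mono/coker in $\cM^{\mathrm{op}}$ become epi/ker in $\cM$, and how ``generated'' and ``cogenerated'' by a set interchange, is exactly what makes this dualization legitimate.
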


\noindent
Applied to the other trivial cotorsion pair $(\cA,\cB)=(\cM,\Inj{\cM})$, Theorem~B yields:
  \begin{align*}
    \Inj{(\Rep{Q}{\cM})} =
    \upPsi(\Inj{\cM}) = \left\{\!
    Y \in \Rep{Q}{\cM} \left|\!\!\!  
    \begin{array}{ll}
      \text{$\psi^Y_i$ is a split epimorphism and} \\
      \text{$Y(i) \in \Inj{\cM}$ for all $i \in Q_0$} 
    \end{array}
    \right.
    \!\!\!\!\!\right\}\!.
  \end{align*}
When $\cM$ is a module category, this recovers a result by Enochs, Estrada, and Garc{\'{\i}}a~Rozas \cite[Prop.~2.1, Def.~2.2, and Thm. 4.2]{EEGR09} (2009). We also mention that if $\cB$ is the class of Gorenstein injective modules over a ring, then a result of Eshraghi, Hafezi, and Salarian~\cite[Thm.~3.5.1(a)]{EshraghiHafeziSalarian} (2013) shows that $\upPsi(\cB)$ is precisely the class of Gorenstein injective representations of $Q$, provided that $Q$ is right rooted.

The paper is organized as follows. \secref[Sects.~]{Quivers} and \secref[]{cotorsion} contain preliminaries on quivers~and cotorsion pairs. In \secref[Sect.~]{adj-e} we show the existence of a left adjoint and a right adjoint of the evaluation functor $\e_i$, and in \secref[Sect.~]{adj-s} we do the same for the stalk functor $\s_i$. In \secref[Sect.~]{Ext} we establish some isomorphisms between various Ext groups, which will allow us to describe relevant perpendicular classes in the category of quiver representations. In the final \secref[Sect.~]{main} we prove our main results, including Theorems A and B above.

\section{Quivers}
\label{sec:Quivers}

Throughout this paper, $Q$ is a quiver (that is, a directed graph) with vertex set $Q_0$ and arrow set $Q_1$. Unless anything else is mentioned, there will be no restrictions on the quiver. Thus, $Q$ may have infinitely many vertices, it may have loops and/or oriented cycles, and there may be infinitely many or no arrows from one vertex to another. 

\begin{ipg}
\label{finiteness-conditions}
For $i,j \in Q_0$ (not necessarily different), we write $Q(i,j)$ for the set of paths in $Q$ from $i$ to $j$. The trivial path at vertex $i$ is denoted by $e_i$. For an arrow  arrow \mbox{$a \colon\! i \to j$} in $Q$ we write $\sou{a}$ for its source and $\tar{a}$ for its target, that is, $\sou{a}=i$ and $\tar{a}=j$. For a given vertex $i \in Q_0$ we denote by $\arrs{i}$, respectively, $\arrt{i}$, the set of arrows in $Q$ whose source, respectively, target, is the vertex $i$, that is, 
\begin{displaymath}
  \arrs{i} = \{ a \in Q_1 \,|\, \sou{a}=i\}
  \qquad \text{and} \qquad
  \arrt{i} = \{ a \in Q_1 \,|\, \tar{a}=i\}\;.
\end{displaymath}    
%A quiver $Q$ is called \emph{vertex-finite} if it has only finitely many vertices, i.e.~if the set $Q_0$ is finite. It is called \emph{locally path-finite} if there are only finitely many paths from any given vertex to another, i.e.~if the set $Q(i,j)$ is finite for every pair of vertices $i,j$. It is called \emph{source-finite} if every vertex is the source of at most finitely many arrows, i.e.~if the set $\arrs{i}$ is finite for every vertex $i$. It is called \emph{target-finite} if every vertex is the target of at most finitely many arrows, i.e.~if the set $\arrt{i}$ is finite for every vertex $i$. (Easy examples show that these four notions of finiteness for a quiver $Q$ are independent.) Finally, a quiver is called \emph{finite} if it is both vertex-, locally path-, source-, and target-finite.
\end{ipg}

\begin{ipg}
  \label{e-and-s}
Let $\cM$ be any category. We write $\Rep{Q}{\cM}$ for the category of $\cM$-valued representations of the quiver $Q$. An object $X$ in $\Rep{Q}{\cM}$ assigns to every vertex $i \in Q_0$ an object $X(i)$ in $\cM$ and to every arrow \mbox{$a \colon\! i \to j$} in $Q$ a morphism $X(a) \colon X(i) \to X(j)$ in $\cM$. A morphism $\lambda \colon X \to Y$ in $\Rep{Q}{\cM}$ is a family $\{ \lambda(i) \colon X(i) \to Y(i)\}_{i \in Q_0}$ of morphisms in $\cM$ for which the diagram
\begin{displaymath}
  \xymatrix{
     X(i) \ar[d]_-{X(a)} \ar[r]^-{\lambda(i)} & Y(i) \ar[d]^-{Y(a)}
     \\
     X(j) \ar[r]^{\lambda(j)} & Y(j)
   }
\end{displaymath}   
is commutative for every arrow \mbox{$a \colon\! i \to j$} in $Q$. Note that if $X$ is an object in $\Rep{Q}{\cM}$ and $p \in Q(i,j)$ is a path in $Q$, then by composition $X$ yields a morphism $X(p) \colon X(i) \to X(j)$ in $\cM$. For the trivial path $e_i$, the morphism $X(e_i)$ is the identity on $X(i)$.

For every $i \in Q_0$ there is an \emph{evaluation functor},
\begin{displaymath}
  \xymatrix{
    \Rep{Q}{\cM} \ar[r]^-{\e_i} & \cM
  },
\end{displaymath}
which maps an $\cM$-valued representation $X$ of $Q$ to the object $\e_i(X) = X(i) \in \cM$ at vertex~$i$. 

If $\cM$ has a zero object $0$, then there is also, for every $i \in Q_0$, a \emph{stalk functor},
\begin{displaymath}
  \xymatrix{
    \cM \ar[r]^-{\s_i} & \Rep{Q}{\cM}
  },
\end{displaymath}
which to an object $M \in \cM$ assigns the \emph{stalk representation} $\s_i(M)$ given by $\s_i(M)(j) = 0$ for $j \neq i$ and $\s_i(M)(i) = M$. For every arrow $a \in Q_1$, the morphism $\s_i(M)(a)$ is  zero.
\end{ipg}

\begin{ipg}
  \label{op}
    For a quiver $Q$ we denote by $Q^\mathrm{op}$ its opposite quiver, and for a category $\cC$ we denote by $\cC^\mathrm{op}$ its opposite category. It is straightforward to verify that
\begin{displaymath}
  \Rep{Q^\mathrm{op}}{\cM^\mathrm{op}} = \Rep{Q}{\cM}^\mathrm{op}.
\end{displaymath}  
\end{ipg}  

\begin{ipg}
  \label{inherited-structures}
  If $\cM$ has a certain type of limits (e.g.~products, pullbacks etc.), then $\Rep{Q}{\cM}$ has the same type of limits, and they are computed vertex-wise in $\cM$. A similar remark holds for colimits, cf.~\ref{op}. 
  
  If $\cM$ is abelian, then so is $\Rep{Q}{\cM}$. Kernels, cokernels, and images in $\Rep{Q}{\cM}$ are computed vertex-wise in $\cM$; thus a sequence $X \to Y \to Z$ in $\Rep{Q}{\cM}$ is exact if and only if the sequence $X(i) \to Y(i) \to Z(i)$ is exact in $\cM$ for every vertex $i \in Q_0$. It follows that every evaluation functor $\e_i$ and every stalk functor $\s_i$ is exact.
\end{ipg}

The remaining part of this section is concerned with rooted quivers; this material will not be relevant before \secref{main}.

Left rooted quivers are defined in Enochs, Oyonarte, and Torrecillas \cite[Sect.~3]{EOT04} (where the terminologi ``rooted'' is used instead of ``left rooted'') and the dual notion of right rooted quivers appears in Enochs, Estrada, and Garc{\'{\i}}a Rozas \cite[Sect.~4]{EEGR09}. 

\begin{ipg}
  \label{left-rooted}
 Let $Q$ be any quiver. As in \cite[Sect.~3]{EOT04} we consider the transfinite sequence $\{V_\alpha\}_{\alpha \; \mathrm{ ordinal}}$ of subsets of the vertex set $Q_0$ defined as follows:
\begin{prt}
\item[$\bullet$] For the first ordinal $\alpha=0$ set $V_0 = \emptyset$.

\item[$\bullet$] For a successor ordinal $\alpha=\beta+1$ set\footnote{As $V_0=\emptyset$ it follows that $V_1 = \{ i \in Q_0 \,|\, \textnormal{$i$ is \emph{not} the target of any arrow $a$ in $Q$}\}$. The vertices in $V_1$ are often called \emph{sources} (this includes \emph{isolated vertices}, i.e.~vertices which are neither a source nor a target of any arrow).}
\begin{displaymath}
  \hspace*{6ex} V_{\alpha} \!= V_{\beta+1} \!= \{ i \in Q_0 \,|\, \textnormal{$i$ is \emph{not} the target of any arrow $a$ in $Q$ with $\sou{a} \notin \textstyle\bigcup_{\gamma\leqslant \beta}V_\gamma$}\}.
\end{displaymath}

\item[$\bullet$] For a limit ordinal $\alpha$ set $V_{\alpha} = \bigcup_{\beta<\alpha}V_\beta$.

\end{prt}
Following \cite[Def.~3.5]{EOT04}, a quiver $Q$ is called \emph{left rooted} if there exists some ordinal $\lambda$ such that $V_\lambda=Q_0$. It is proved in \cite[Prop.~3.6]{EOT04} that $Q$ is left rooted if and only if there exists no infinite sequence 
$\cdots \to \bullet \to \bullet \to \bullet$ of (not necessarily different) composable arrows~in~$Q$. Hence, the left rooted quivers constitute quite a large class of quivers, for example, every \emph{path-finite} quiver---that is, a quiver which has only finitely many paths---is left rooted.
\end{ipg}

\enlargethispage{6.5ex}

\begin{exa}
  Let $Q$ be the (left rooted) quiver:
  \begin{displaymath}
    \xymatrix@R=-0.1pc@C=0.6pc{
      {} & \underset{5}{\bullet} & {}
      \\
      {} & {} & {}
      \\      
      {} & \underset{2}{\bullet} \ar[dr] & {}
      \\
      \underset{1}{\bullet} \ar[dr] \ar[ur] \ar@/^0.5pc/[uuur]
      & {} & \underset{4}{\bullet} \ar@/_0.5pc/[uuul]
      \\
      {} & \underset{3}{\bullet} \ar[ur] & {}      
    }
  \end{displaymath}
  For this quiver, the transfinite sequence $\{V_\alpha\}$ from \ref{left-rooted} looks like this:
  \begin{displaymath}
    \begin{array}{ccccc}
    {
    \xymatrix@R=-0.1pc@C=0.6pc{
      {} & \underset{5}{\circ} & {}
      \\
      {} & {} & {}
      \\      
      {} & \underset{2}{\circ} \ar@{..>}[dr] & {}
      \\
      \underset{1}{\circ} \ar@{..>}[dr] \ar@{..>}[ur] \ar@{..>}@/^0.5pc/[uuur]
      & {} & \underset{4}{\circ} \ar@{..>}@/_0.5pc/[uuul]
      \\
      {} & \underset{3}{\circ} \ar@{..>}[ur] & {}      
    }
    }
    & \hspace*{1.8ex}
    {
    \xymatrix@R=-0.1pc@C=0.6pc{
      {} & \underset{5}{\circ} & {}
      \\
      {} & {} & {}
      \\      
      {} & \underset{2}{\circ} \ar@{..>}[dr] & {}
      \\
      \underset{1}{\bullet} \ar@{..>}[dr] \ar@{..>}[ur] \ar@{..>}@/^0.5pc/[uuur]
      & {} & \underset{4}{\circ} \ar@{..>}@/_0.5pc/[uuul]
      \\
      {} & \underset{3}{\circ} \ar@{..>}[ur] & {}      
    }
    }    
    & \hspace*{1.6ex}
    {
    \xymatrix@R=-0.1pc@C=0.6pc{
      {} & \underset{5}{\circ} & {}
      \\
      {} & {} & {}
      \\      
      {} & \underset{2}{\bullet} \ar@{..>}[dr] & {}
      \\
      \underset{1}{\bullet} \ar@{..>}[dr] \ar@{..>}[ur] \ar@{..>}@/^0.5pc/[uuur]
      & {} & \underset{4}{\circ} \ar@{..>}@/_0.5pc/[uuul]
      \\
      {} & \underset{3}{\bullet} \ar@{..>}[ur] & {}      
    }
    }        
    &
    {
    \xymatrix@R=-0.1pc@C=0.6pc{
      {} & \underset{5}{\circ} & {}
      \\
      {} & {} & {}
      \\      
      {} & \underset{2}{\bullet} \ar@{..>}[dr] & {}
      \\
      \underset{1}{\bullet} \ar@{..>}[dr] \ar@{..>}[ur] \ar@{..>}@/^0.5pc/[uuur]
      & {} & \underset{4}{\bullet} \ar@{..>}@/_0.5pc/[uuul]
      \\
      {} & \underset{3}{\bullet} \ar@{..>}[ur] & {}      
    }
    }            
    &
    {
    \xymatrix@R=-0.1pc@C=0.6pc{
      {} & \underset{5}{\bullet} & {}
      \\
      {} & {} & {}
      \\      
      {} & \underset{2}{\bullet} \ar@{..>}[dr] & {}
      \\
      \underset{1}{\bullet} \ar@{..>}[dr] \ar@{..>}[ur] \ar@{..>}@/^0.5pc/[uuur]
      & {} & \underset{4}{\bullet} \ar@{..>}@/_0.5pc/[uuul]
      \\
      {} & \underset{3}{\bullet} \ar@{..>}[ur] & {}      
    }
    }                
    \\
    V_0=\emptyset
    & \hspace*{1.8ex}
    V_1 = \{1\}
    & \hspace*{1.6ex}
    V_2 = \{1,2,3\}
    &
    V_3 = \{1,2,3,4\}    
    &
    V_4 = Q_0 \,.       
    \end{array}
  \end{displaymath}  
\end{exa}

The following properties about the transfinite sequence $\{V_\alpha\}$ from \ref{left-rooted}---which we will need later---are not mentioned in \cite{EOT04}, however, these properties are probably known to the authors of \cite{EOT04}. A consequence of the lemma below is that one can simplify the definition of $V_{\beta+1}$ in \ref{left-rooted} to be $V_{\beta+1} \!= \{ i \in Q_0 \,|\, \textnormal{$i$ is \emph{not} the target of any arrow $a$ in $Q$ with $\sou{a} \notin V_\beta$}\}$.

\begin{lem}
  \label{lem:Ascending}
  The transfinite sequence $\{V_\alpha\}$ defined in \ref{left-rooted} is ascending, that is, for every pair of ordinals $\alpha$, $\beta$ with $\alpha<\beta$ one has $V_\alpha \subseteq V_\beta$. In particular, one has \smash{$\bigcup_{\alpha\leqslant \beta}V_\alpha=V_\beta$} for every ordinal $\beta$.
\end{lem}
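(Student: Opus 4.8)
The plan is to argue by transfinite induction, the one non-formal ingredient being a monotonicity property of the operation defining the successor step in \ref{left-rooted}. For a subset $W\subseteq Q_0$ let $F(W)$ denote the set of vertices $i\in Q_0$ that are \emph{not} the target of any arrow $a$ in $Q$ with $\sou{a}\notin W$, so that $V_{\beta+1}=F\bigl(\bigcup_{\gamma\leqslant\beta}V_\gamma\bigr)$ by definition. The key elementary observation is that $F$ preserves inclusions: if $W\subseteq W'$ then $F(W)\subseteq F(W')$, since enlarging $W$ only weakens the condition imposed on $i$. I would also use the trivial fact that $\bigcup_{\gamma\leqslant\beta}V_\gamma\subseteq\bigcup_{\gamma\leqslant\beta'}V_\gamma$ whenever $\beta\leqslant\beta'$.

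Step one is to show $V_\beta\subseteq V_{\beta+1}$ for every ordinal $\beta$, by transfinite induction on $\beta$. The case $\beta=0$ is clear as $V_0=\emptyset$. If $\beta=\epsilon+1$, then $V_\beta=F\bigl(\bigcup_{\gamma\leqslant\epsilon}V_\gamma\bigr)$ and $V_{\beta+1}=F\bigl(\bigcup_{\gamma\leqslant\beta}V_\gamma\bigr)$, and monotonicity of $F$ applied to the inclusion $\bigcup_{\gamma\leqslant\epsilon}V_\gamma\subseteq\bigcup_{\gamma\leqslant\beta}V_\gamma$ gives the claim. If $\beta$ is a limit ordinal, then for each $\gamma<\beta$ the induction hypothesis together with monotonicity gives $V_\gamma\subseteq V_{\gamma+1}=F\bigl(\bigcup_{\rho\leqslant\gamma}V_\rho\bigr)\subseteq F\bigl(\bigcup_{\rho\leqslant\beta}V_\rho\bigr)=V_{\beta+1}$, and taking the union over $\gamma<\beta$ yields $V_\beta=\bigcup_{\gamma<\beta}V_\gamma\subseteq V_{\beta+1}$.

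Step two deduces the lemma by a second transfinite induction on $\beta$, with the statement ``$V_\alpha\subseteq V_\beta$ for all $\alpha\leqslant\beta$''. In the successor case $\beta=\delta+1$ one combines $V_\alpha\subseteq V_\delta$ (trivial if $\alpha=\delta$, otherwise the induction hypothesis) with $V_\delta\subseteq V_{\delta+1}$ from step one. In the limit case the inclusion $V_\alpha\subseteq V_\beta$ for $\alpha<\beta$ is immediate from $V_\beta=\bigcup_{\gamma<\beta}V_\gamma$. This gives the ascending property; the ``in particular'' assertion $\bigcup_{\alpha\leqslant\beta}V_\alpha=V_\beta$ then follows at once, since every term of that union lies in $V_\beta$ while $V_\beta$ itself occurs as the term $\alpha=\beta$.

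I do not anticipate a real obstacle: the argument is routine transfinite induction. The only point requiring care is that the successor clause of \ref{left-rooted} is phrased using the cumulative union $\bigcup_{\gamma\leqslant\beta}V_\gamma$ rather than $V_\beta$ alone, so monotonicity of $F$ must be invoked with respect to these cumulative unions rather than the individual sets $V_\beta$ — which is exactly what makes the lemma, and the simplification of the definition remarked on just before it, worth recording.
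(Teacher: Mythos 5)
Your proof is correct and arrives at the same conclusion by the same basic method (transfinite induction), but it is organized more transparently than the paper's. The paper proves, by a single induction on $\gamma$, the cumulative statement $(P_\gamma)$ that $V_\alpha\subseteq V_\beta$ holds for all $\alpha<\beta\leqslant\gamma$, and inside the successor case it reduces to showing $V_\delta\subseteq V_{\delta+1}$, which it verifies by a further case split on whether $\delta$ is a limit or a successor, unwinding the definition of $V_{\delta+1}$ by hand each time. You instead isolate the real mechanism as a monotonicity lemma for the operator $F(W)=\{i\in Q_0\mid\text{no arrow into }i\text{ has source outside }W\}$, which turns $V_{\beta+1}=F\bigl(\bigcup_{\gamma\leqslant\beta}V_\gamma\bigr)$ into an immediate consequence and eliminates the paper's inner case split: the successor step of your Step 1 needs no induction hypothesis at all, only monotonicity of $F$ and the trivial inclusion of cumulative unions, and the limit step needs the induction hypothesis only once. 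The resulting two-step structure (first $V_\beta\subseteq V_{\beta+1}$ for all $\beta$, then chain up to the full ascending property) is logically equivalent to what the paper does, but the abstraction through $F$ makes the argument shorter and makes it visibly a fixed-point-iteration phenomenon rather than something specific to quivers. Everything checks out; there is no gap.
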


\begin{proof}
  It suffices, for every ordinal $\gamma$, to prove the statement:
  \begin{displaymath}
  \tag{$P_\gamma$}
  \textnormal{For every pair of ordinals $\alpha$, $\beta \leqslant \gamma$ for which $\alpha<\beta$ one has $V_\alpha \subseteq V_\beta$.}
  \end{displaymath}
  We will do this by transfinite induction on $\gamma$. The induction start is easy: For $\gamma=0$ the statement is empty since the situation $\alpha<\beta\leqslant \gamma=0$ is impossible. And for $\gamma=1$ the only possibility for $\alpha<\beta\leqslant \gamma=1$ is $\alpha=0$ and $\beta=1$, and evidently $V_0 \subseteq V_1$ as $V_0 = \emptyset$.
  
  Now assume that $\gamma$ is a limit ordinal and that ($P_\delta$) holds for all $\delta<\gamma$. To prove that ($P_\gamma$) is true, let ordinals $\alpha<\beta \leqslant \gamma$ be given. If $\beta<\gamma$ then, as ($P_\beta$) holds, we get that $V_\alpha \subseteq V_\beta$. If $\beta=\gamma$, then one has $V_\beta = V_\gamma = \bigcup_{\delta<\gamma}V_\delta$ (since $\gamma$ is a limit ordinal), so clearly $V_\alpha \subseteq V_\beta$.
  
  It remains to consider the situation where $\gamma=\delta+1$ is a successor ordinal. We assume that ($P_\delta$) holds and must show that ($P_{\delta+1}$) holds as well. Let ordinals $\alpha<\beta \leqslant \delta+1$ be given. If one has $\beta<\delta+1$, then $\beta \leqslant \delta$ and it follows from ($P_\delta$) that $V_\alpha \subseteq V_\beta$. Now assume that $\beta=\delta+1$. As $\alpha \leqslant\delta$ and since ($P_\delta$) holds, we have $V_\alpha \subseteq V_\delta$. Thus, to prove the desired conclusion $V_\alpha \subseteq V_\beta = V_{\delta+1}$, it suffices to argue that $V_\delta \subseteq V_{\delta+1}$. There are two cases:
  
  (1) $\delta$ is a limit ordinal. To prove $V_\delta \subseteq V_{\delta+1}$, assume that $j \in V_\delta$. As $\delta$ is a limit ordinal, we have $V_{\delta} = \bigcup_{\sigma<\delta}V_\sigma$ and hence $j \in V_\sigma$ for some $\sigma<\delta$. Since $\sigma<\sigma+1<\delta$ and since ($P_\delta$) holds, we have $V_\sigma \subseteq V_{\sigma+1}$ and therefore also $j \in V_{\sigma+1}$. By definition, this means that there exists \emph{no} arrow \mbox{$i \to\! j$} in $Q$ with 
  $i \notin \bigcup_{\tau\leqslant \sigma}V_\tau$. As $\sigma \leqslant \delta$ (in fact, $\sigma<\delta$) one has \smash{$\bigcup_{\tau\leqslant \sigma}V_\tau \subseteq \bigcup_{\tau\leqslant \delta}V_\tau$}, and it follows that there exists \emph{no} arrow \mbox{$i \to\! j$} in $Q$ with $i \notin \bigcup_{\tau\leqslant \delta}V_\tau$. By definition, this means that $j \in V_{\delta+1}$, as desired.
  
  (2) $\delta=\varepsilon+1$ is a successor ordinal.  To prove $V_\delta \subseteq V_{\delta+1}$, assume that $j \in V_\delta = V_{\varepsilon+1}$. By definition, this means that there exists \emph{no} arrow \mbox{$i \to\! j$} in $Q$ with 
  $i \notin \bigcup_{\tau\leqslant \varepsilon}V_\tau$. As $\varepsilon \leqslant \delta$ (in fact, $\varepsilon<\delta$) one has \smash{$\bigcup_{\tau\leqslant \varepsilon}V_\tau \subseteq \bigcup_{\tau\leqslant \delta}V_\tau$}, and it follows that there exists \emph{no} arrow \mbox{$i \to\! j$} in $Q$ with $i \notin \bigcup_{\tau\leqslant \delta}V_\tau$. By definition, this means that $j \in V_{\delta+1}$ as desired.  
\end{proof}

\begin{cor}
  \label{cor:no-arrow}
  Let $i, j \in Q_0$ and let $\{V_\alpha\}$ be the transfinite sequence from \ref{left-rooted}. If $i \notin V_\alpha$ and $j \in V_{\alpha+1}$ (in particular, if $j \in V_\alpha$ by \lemref{Ascending}), then there exists no arrow \mbox{$i \to\! j$} in~$Q$.
\end{cor}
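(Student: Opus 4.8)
The plan is to reduce everything to the definition of $V_{\alpha+1}$ as a successor step, using \lemref{Ascending} to simplify the union that appears in that definition. First I would recall that, by \ref{left-rooted}, the successor step reads
\begin{displaymath}
  V_{\alpha+1} = \big\{\, k \in Q_0 \,\big|\, \text{$k$ is not the target of any arrow $a$ in $Q$ with $\sou{a} \notin \textstyle\bigcup_{\gamma\leqslant\alpha}V_\gamma$} \,\big\}\,,
\end{displaymath}
and then invoke \lemref{Ascending}, which gives $\bigcup_{\gamma\leqslant\alpha}V_\gamma = V_\alpha$. Hence membership $j \in V_{\alpha+1}$ is equivalent to the assertion that $j$ is not the target of any arrow whose source lies outside $V_\alpha$.

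With that reformulation in hand, the corollary is immediate by contradiction: suppose $i \notin V_\alpha$, $j \in V_{\alpha+1}$, and there were an arrow $a\colon i \to j$ in $Q$. Then $\tar{a}=j$ and $\sou{a}=i \notin V_\alpha$, so $j$ \emph{is} the target of an arrow with source outside $V_\alpha$, contradicting $j \in V_{\alpha+1}$. Therefore no such arrow exists.

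For the parenthetical remark, I would observe that \lemref{Ascending} gives $V_\alpha \subseteq V_{\alpha+1}$ (the special case $\alpha < \alpha+1$ of the ascending property), so the hypothesis $j \in V_\alpha$ implies $j \in V_{\alpha+1}$ and the conclusion follows from the case already treated. I do not anticipate any real obstacle here; the only point that requires care is making sure the union $\bigcup_{\gamma\leqslant\alpha}V_\gamma$ in the definition of $V_{\alpha+1}$ is correctly collapsed to $V_\alpha$ via \lemref{Ascending}, which is exactly the ``in particular'' clause of that lemma, and confirming that the argument is insensitive to whether $\alpha$ is zero, a successor, or a limit ordinal (it is, since we only use the successor-step formula for $V_{\alpha+1}$ itself).
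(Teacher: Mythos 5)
Your proposal is correct and follows essentially the same route as the paper's proof: unwind the successor-step definition of $V_{\alpha+1}$, collapse the union $\bigcup_{\gamma\leqslant\alpha}V_\gamma$ to $V_\alpha$ via \lemref{Ascending}, and read off the conclusion. The paper phrases it directly rather than by contradiction, but that is a cosmetic difference.
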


\begin{proof}
  Since $j \in V_{\alpha+1}$ there exists  by definition \emph{no} arrow \mbox{$k \to\! j$} in $Q$ with \smash{$k \notin \bigcup_{\beta\leqslant \alpha}V_\beta$}. By \lemref{Ascending} we have \smash{$\bigcup_{\beta\leqslant \alpha}V_\beta=V_\alpha$}, so there exists \emph{no} arrow \mbox{$k \to\! j$} in $Q$ with $k \notin V_\alpha$.
\end{proof}

\begin{ipg}
  \label{right-rooted}
 Let $Q$ be a quiver. As in \cite[Sect.~4]{EEGR09} we consider the transfinite sequence $\{W_\alpha\}_{\alpha \; \mathrm{ ordinal}}$ of subsets of the vertex set $Q_0$ defined as follows:
\begin{prt}
\item[$\bullet$] For the first ordinal $\alpha=0$ set $W_0 = \emptyset$.

\item[$\bullet$] For a successor ordinal $\alpha=\beta+1$ set\footnote{Actually, in \cite[Sect.~4]{EEGR09} they set $W_{\beta+1} \!= \{ i \in Q_0 \,|\, \textnormal{$i$ is \emph{not} the source of any arrow $a$ in $Q$ with $\tar{a} \notin W_\beta$}\}$, but this is the same as the definition of $W_{\beta+1}$ we have given; cf.~the text preceding \lemref{Ascending}.}
\begin{displaymath}
  \hspace*{6ex} W_{\alpha} \!= W_{\beta+1} \!= \{ i \in Q_0 \,|\, \textnormal{$i$ is \emph{not} the source of any arrow $a$ in $Q$ with $\tar{a} \notin \textstyle\bigcup_{\gamma\leqslant \beta}W_\gamma$}\}.
\end{displaymath}

\item[$\bullet$] For a limit ordinal $\alpha$ set $W_{\alpha} = \bigcup_{\beta<\alpha}W_\beta$.

\end{prt}
A quiver $Q$ is called \emph{right rooted} if there exists some ordinal $\lambda$ such that $W_\lambda=Q_0$, equivalently, if there exists no infinite sequence 
$\bullet \to \bullet \to \bullet \to \cdots$ of (not necessarily different) composable arrows in $Q$. 
\end{ipg}

Note that the sequence $\{V_\alpha\}$ in \ref{left-rooted} for the quiver $Q^\mathrm{op}$ coincides with the sequence $\{W_\alpha\}$ in \ref{right-rooted} for the quiver $Q$. Therefore a quiver $Q$ is left rooted, respectively, right rooted, if and only if the opposite quiver $Q^\mathrm{op}$ is right rooted, respectively, left rooted.

\section{Adjoints of the evaluation functor $\e_i$}
\label{sec:adj-e}

As stated in \secref{Quivers}, we work with an arbitrary quiver $Q$. Furthermore, in this section, $\cM$ denotes any category. We will show that if $\cM$ has small coproducts, respectively, small products, then the evaluation functor $\e_i \colon \Rep{Q}{\cM} \to \cM$ from \ref{e-and-s} has a left adjoint $\f_i$, respectively, a right adjoint $\g_i$. If \mbox{$\cM = \mathrm{Mod}\mspace{3mu}R$} is the category of (left) modules over a ring $R$, then the left adjoint of $\e_i$ was constructed in Enochs, Oyonarte, and Tor\-re\-cillas \cite{EOT04} and the right adjoint of $\e_i$ was considered in Enochs and Herzog \cite{EnochsHerzog}. Here we give a shorter and cleaner argument which works for any category $\cM$, and also explains the duality between the functors $\f_i$ and $\g_i$; cf.~\ref{g}.

\begin{ipg}  
  \label{f}
  Assume that $\cM$ has small coproducts and fix any vertex $i \in Q_0$. For any $M \in \cM$ we construct a representation $\f_i(M) \in \Rep{Q}{\cM}$ as follows. For $j \in Q_0$ set 
\begin{displaymath}
  \f_i(M)(j) =  \textstyle{\coprod_{p \in Q(i,j)}M_p} 
\end{displaymath}
where each $M_p$ is a copy of $M$. Notice that if there are no paths in $Q$ from $i$ to $j$, then this coproduct is empty and hence $\f_i(M)(j)$ is the initial object in $\cM$. Let \mbox{$a \colon\! j \to k$} be an arrow in $Q$. Note that each path $p \in Q(i,j)$ yields a path $ap \in Q(i,k)$, and we define $\f_i(M)(a)$ to be the unique morphism in $\cM$ that makes the following diagrams commutative:
\begin{equation}
  \label{eq:f}
  \begin{gathered}
  \xymatrix@C=1pc{
    M_p \ar[d]_-{\varepsilon_p} \ar@{=}[r] & 
    M \ar@{=}[r] & 
    M_{ap} \ar[d]^-{\varepsilon_{ap}}
    \\
    \f_i(M)(j) \ar[rr]^-{\f_i(M)(a)} & & \f_i(M)(k)
  }
  \end{gathered}  
  \qquad (p \in Q(i,j))\,.  
\end{equation}
Here the vertical morphisms $\varepsilon_*$ are the canonical injections. It is evident that the assignment $M \mapsto \f_i(M)$ yields a functor $\f_i \colon \cM \to \Rep{Q}{\cM}$.
\end{ipg}  

\begin{rmk}
  \label{rmk:fewer}
  For the construction of the functors $\f_i$ to work, it is not necessary to require that $\cM$ has \textsl{all} small coproducts; it suffices to assume that the coproduct exists in $\cM$ for every set of objects $\{M_u\}_{u \in U}$ with cardinality $|U| = |Q(i,j)|$ for some $i,j \in Q_0$. 
  
  A quiver $Q$ is called \emph{locally path-finite} if there are only finitely many paths in $Q$ from any given vertex to another, i.e.~if the set $Q(i,j)$ is finite for all $i,j \in Q_0$. For such a quiver, the functors $\f_i \colon \cM \to \Rep{Q}{\cM}$ exist for every category $\cM$ with \textsl{finite} coproducts.
\end{rmk}

\begin{exa}
  \label{exa:f1}
  Let $Q$ be the quiver with one vertex (labelled ``$1$'') and one loop:
  \begin{displaymath}
    \xymatrix{
      %*{{}_1\bullet\mspace{15mu}} 
      \underset{1}{\bullet} \ar@(ur,dr)[]
    }
  \end{displaymath}
  Using ``element notation'', the functor $\f_1$ maps $M \in \cM$ to the representation
  \begin{displaymath}
    \xymatrix{
      *{M \ \text{\raisebox{0.3ex}{$\scriptstyle\coprod$}} \ M \ \text{\raisebox{0.3ex}{$\scriptstyle\coprod$}} \ M \ \text{\raisebox{0.3ex}{$\scriptstyle\coprod$}} \ \cdots \mspace{130mu}} \ar@(ur,dr)[]^-{\lambda}
    }
    \mspace{-70mu}
    \quad \text{where} \quad
    \lambda(m_0,m_1,m_2,\ldots) = (0,m_0,m_1,\ldots)\;.
  \end{displaymath}  
  Note that the functor $\f_1$ exists if $\cM$ has countable coproducts, cf.~\rmkref{fewer}.
\end{exa}

\begin{exa}
  \label{exa:f2}
  Let $Q$ be the quiver 
  \begin{displaymath}
    A_\infty = \ \
    \xymatrix@C=1.35pc{
      \cdots \ar[r] & \underset{i+2}{\bullet} \ar[r] & \underset{i+1}{\bullet} \ar[r] & \underset{i}{\bullet} \ar[r] & \underset{i-1}{\bullet} \ar[r] & \cdots \ar[r] & \underset{2}{\bullet} \ar[r] & \underset{1}{\bullet}
    }.
  \end{displaymath}
  The functor $\f_i$ maps $M \in \cM$ to the representation
  \begin{displaymath}
    \phantom{A_\infty = \ \ }
    \xymatrix@C=1.2pc{
      \cdots \ar[r] & \underset{i+2}{0} \ar[r] & \underset{i+1}{0} \ar[r] & \underset{i}{M} \ar[r]^-{=} & \underset{i-1}{M} \ar[r]^-{=} & \cdots \ar[r]^-{=} & \underset{2}{M} \ar[r]^-{=} & \underset{1}{M}
    }.
  \end{displaymath}
  where $0$ is the initial object in $\cM$. Note that for this particular quiver, the only requirement for the existence of $\f_i$ is that $\cM$ has an initial object (= empty coproduct), cf.~\rmkref{fewer}.
\end{exa}

\begin{lem}
  \label{lem:f-path}
  For $i \in Q_0$ and $M \in \cM$ consider the representation $\f_i(M) \in \Rep{Q}{\cM}$ constructed in \ref{f}. For every path $p \in Q(i,j)$ one has $\f_i(M)(p)\circ\varepsilon_{e_i}=\varepsilon_p$.
\end{lem}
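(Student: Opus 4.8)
The plan is to prove the identity $\f_i(M)(p)\circ\varepsilon_{e_i}=\varepsilon_p$ by transfinite... no, simply by induction on the length of the path $p$. Since $p \in Q(i,j)$ is a finite path, write $\ell(p)$ for its length (number of arrows), and prove the statement $P_n$: ``for every $j \in Q_0$ and every path $p \in Q(i,j)$ with $\ell(p)=n$ one has $\f_i(M)(p)\circ\varepsilon_{e_i}=\varepsilon_p$'' by induction on $n \geqslant 0$.

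For the base case $n=0$, the only path of length $0$ starting at $i$ is the trivial path $e_i \in Q(i,i)$, and by \ref{e-and-s} the morphism $\f_i(M)(e_i)$ is the identity on $\f_i(M)(i)$; hence $\f_i(M)(e_i)\circ\varepsilon_{e_i}=\varepsilon_{e_i}$, which is exactly what is claimed. For the inductive step, suppose $P_n$ holds and let $p \in Q(i,k)$ have length $n+1$. Then $p = aq$ for a unique path $q \in Q(i,j)$ of length $n$ and a unique arrow $a\colon j \to k$, and by functoriality $\f_i(M)(p)=\f_i(M)(a)\circ\f_i(M)(q)$. Composing with $\varepsilon_{e_i}$ and using $P_n$ gives $\f_i(M)(p)\circ\varepsilon_{e_i}=\f_i(M)(a)\circ\f_i(M)(q)\circ\varepsilon_{e_i}=\f_i(M)(a)\circ\varepsilon_q$. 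Now the defining commutative diagram \eqref{f} for the arrow $a$ (applied to the path $q \in Q(i,j)$) says precisely that $\f_i(M)(a)\circ\varepsilon_q=\varepsilon_{aq}=\varepsilon_p$, which completes the induction.

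The argument is essentially a bookkeeping exercise, so there is no serious obstacle; the only point demanding a little care is making sure the decomposition $p=aq$ is matched correctly with the diagram \eqref{f}, in which the left vertical map is $\varepsilon_q$ (indexed by the ``old'' path) and the right vertical map is $\varepsilon_{aq}$ (indexed by the ``extended'' path). One should also note that the commutative square \eqref{f} is stated for paths with source $i$, which is exactly the situation here, so no extra justification is needed. I would keep the write-up to a few lines, citing \ref{e-and-s} for the base case and the diagram \eqref{f} together with functoriality of $\f_i(M)$ for the inductive step.
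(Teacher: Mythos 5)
Your proof is correct and follows essentially the same approach as the paper: the paper handles the trivial path directly and then says the identity "follows from successive applications of \eqref{f}" for a non-trivial path written as a composite of arrows, which is exactly the induction on path length you have carried out explicitly. You have just made the induction formal (base case, decomposition $p = aq$, inductive hypothesis, one application of \eqref{f}); there is no substantive difference.
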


\begin{proof}
  The assertion is obviously true for the trivial path $p=e_i$ as $\f_i(M)(e_i)$ is the identity morphism. Every non-trivial path $p$ from $i$ to $j$ is a finite sequence of arrows in $Q$,
  \begin{displaymath}
    \xymatrix@C=1.3pc{
      i = j_1 \ar[r]^-{a_1} & j_2 \ar[r]^-{a_2} & \cdots \ar[r]^-{a_n} & j_{n+1} = j
    } \qquad (n \geqslant 1)
  \end{displaymath}
and the desired identity follows from successive applications of \eqref{f}.
\end{proof}
      
\begin{ipg}  
  \label{g}
  Assume that $\cM$ has small products and fix any vertex $i \in Q_0$. By a construction dual to that in \ref{f} one gets a functor $\g_i \colon \cM \to \Rep{Q}{\cM}$, that is, for $j \in Q_0$ we have
\begin{displaymath}
  \g_i(M)(j) =  \textstyle{\prod_{\,q \in Q(j,i)}M_q}
\end{displaymath}
where each $M_q$ is a copy of $M$. If there are no paths in $Q$ from $j$ to $i$, then this product is empty and hence $\g_i(M)(j)$ is the terminal object in $\cM$. For an arrow \mbox{$a \colon\! j \to k$} in $Q$ the morphism $\g_i(M)(a)$ is the unique one that makes the following diagrams commutative:
\begin{equation*}
%  \label{eq:g}
  \begin{gathered}
  \xymatrix@C=1pc{
    \g_i(M)(j) \ar[d]_-{\pi_{qa}} \ar[rr]^-{\g_i(M)(a)} & & 
    \g_i(M)(k) \ar[d]^		-{\pi_q}
    \\
    M_{qa} \ar@{=}[r] & M \ar@{=}[r] & M_q
  }
  \end{gathered}  
  \qquad (q \in Q(k,i))\,.  
\end{equation*}
Here the vertical morphisms $\pi_*$ are the canonical projections.

Let us make the duality between the functors $\f_i$ ang $\g_i$ even more clear: A precise notation for the functor $\f_i \colon \cM \to \Rep{Q}{\cM}$ in \ref{f} is \smash{$\f_i^{Q,\cM}$}, and it exists for every quiver $Q$ and every category $\cM$ with small coproducts. If $\cM$ has small products, then  $\cM^\mathrm{op}$ has small coproducts, and thus it makes sense to consider the functor \smash{$\f_i^{Q^\mathrm{op},\cM^\mathrm{op}} \colon  \cM^\mathrm{op} \to \Rep{Q^\mathrm{op}}{\cM^\mathrm{op}}$}.
By taking the opposite of this functor, see \cite[Chap.~II\S2]{Mac}, one gets in view of \ref{op} a functor
\begin{displaymath}
  (\f_i^{Q^\mathrm{op},\cM^\mathrm{op}})^\mathrm{op} \colon \cM \longrightarrow \Rep{Q}{\cM}\;,
\end{displaymath} 
and it is straightforward to verify that this functor is nothing but $\g_i$ ($=\g_i^{Q,\cM}$).
\end{ipg}    
  
\begin{thm}
  \label{thm:adjoints-of-e}
  Let $\cM$ be any category, let $i$ be any vertex in a quiver $Q$, and consider the evaluation functor $\e_i \colon \Rep{Q}{\cM} \to \cM$ from \ref{e-and-s}. The following assertions hold.
\begin{prt}
\item If $\cM$ has small coproducts, then the 
functor $\f_i$ from \ref{f} is a left adjoint of~$\e_i$.

\item If $\cM$ has small products, then the functor $\g_i$ from \ref{g} is a right adjoint of $\e_i$.
\end{prt}
\end{thm}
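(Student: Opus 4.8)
The plan is to prove \prtlbl{a} directly, by exhibiting a bijection
\begin{displaymath}
  \Hom{\Rep{Q}{\cM}}{\f_i(M)}{X} \;\xrightarrow{\ \cong\ }\; \Hom{\cM}{M}{\e_i(X)} = \Hom{\cM}{M}{X(i)}
\end{displaymath}
that is natural in $M \in \cM$ and $X \in \Rep{Q}{\cM}$, and then to deduce \prtlbl{b} from \prtlbl{a} by the formal duality set up in \ref{g}. The map from left to right will send a morphism $\lambda \colon \f_i(M) \to X$ to the composite $\lambda(i)\circ\varepsilon_{e_i} \colon M \to X(i)$, where $\varepsilon_{e_i}$ is the canonical injection into the coproduct $\f_i(M)(i) = \coprod_{p \in Q(i,i)}M_p$ of the copy of $M$ indexed by the trivial path $e_i$. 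The inverse will send a morphism $f \colon M \to X(i)$ to the morphism $\lambda^f \colon \f_i(M) \to X$ whose component at a vertex $j$ is the \emph{unique} morphism $\lambda^f(j) \colon \f_i(M)(j) \to X(j)$ with $\lambda^f(j)\circ\varepsilon_p = X(p)\circ f$ for every path $p \in Q(i,j)$; existence and uniqueness of $\lambda^f(j)$ are exactly the universal property of the coproduct $\f_i(M)(j) = \coprod_{p\in Q(i,j)}M_p$.

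Three verifications will then be needed, each reducing to precomposition with the coproduct injections $\varepsilon_p$. First, $\lambda^f$ must be a genuine morphism of representations: given an arrow $a\colon j\to k$, one checks that $X(a)\circ\lambda^f(j)$ and $\lambda^f(k)\circ\f_i(M)(a)$ agree by composing each with $\varepsilon_p$ ($p\in Q(i,j)$) and using the defining diagram \eqref{f} of $\f_i(M)(a)$ to see that both give $X(ap)\circ f$. Second, the two assignments must be mutually inverse: from $f$ one recovers $\lambda^f(i)\circ\varepsilon_{e_i} = X(e_i)\circ f = f$ since $X(e_i)=\mathrm{id}_{X(i)}$; conversely, starting from $\lambda$ and putting $f=\lambda(i)\circ\varepsilon_{e_i}$, one uses that a morphism of representations commutes with the action of paths, i.e.\ $X(p)\circ\lambda(i)=\lambda(j)\circ\f_i(M)(p)$, together with $\f_i(M)(p)\circ\varepsilon_{e_i}=\varepsilon_p$ from \lemref{f-path}, to get $\lambda^f(j)\circ\varepsilon_p=\lambda(j)\circ\varepsilon_p$ for all $p$, hence $\lambda^f=\lambda$. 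Third, naturality in $M$ and in $X$ is immediate from the formulas. The main (though still routine) point here is the second verification: it is where \lemref{f-path} and the fact that representation morphisms intertwine the path actions $\f_i(M)(p)$ and $X(p)$ are genuinely used; everything else is bookkeeping with universal properties of coproducts.

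For \prtlbl{b} the plan is to avoid repeating the argument dually and instead invoke the identity $\Rep{Q^\mathrm{op}}{\cM^\mathrm{op}}=\Rep{Q}{\cM}^\mathrm{op}$ from \ref{op} and the observation in \ref{g} that $\g_i=\g_i^{Q,\cM}$ is the opposite of the functor $\f_i^{Q^\mathrm{op},\cM^\mathrm{op}}$. Since $\cM$ has small products, $\cM^\mathrm{op}$ has small coproducts, so part \prtlbl{a}, applied to the quiver $Q^\mathrm{op}$ and the category $\cM^\mathrm{op}$, yields an adjunction $\f_i^{Q^\mathrm{op},\cM^\mathrm{op}}\dashv\e_i^{Q^\mathrm{op},\cM^\mathrm{op}}$. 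Passing to opposite functors turns a left adjoint into a right adjoint; and under the identification $\Rep{Q^\mathrm{op}}{\cM^\mathrm{op}}=\Rep{Q}{\cM}^\mathrm{op}$ evaluation visibly commutes with taking opposites, so $(\e_i^{Q^\mathrm{op},\cM^\mathrm{op}})^\mathrm{op}=\e_i^{Q,\cM}=\e_i$, while $(\f_i^{Q^\mathrm{op},\cM^\mathrm{op}})^\mathrm{op}=\g_i$. Therefore $\e_i\dashv\g_i$, which is precisely \prtlbl{b}. I do not anticipate any real obstacle beyond the care required to check naturality in both variables in \prtlbl{a} and to confirm that the functor identifications used in \prtlbl{b} are the intended ones.
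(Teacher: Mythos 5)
Your proposal is correct and follows essentially the same route as the paper: the explicit unit--counit style bijection for \prtlbl{a} with the same formulas (using the injection $\varepsilon_{e_i}$ and \lemref{f-path}), and the same formal opposite-category reduction for \prtlbl{b} via \ref{op} and \ref{g}. The only cosmetic difference is that the paper isolates the ``opposite of a left adjoint is a right adjoint'' step as a separate \lemref{adjoint}, whereas you fold it into the argument directly.
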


\begin{proof}
  \proofoftag{a} For $M \in \cM$ and $X \in \Rep{Q}{\cM}$ we construct a pair of natural maps
  \begin{displaymath}
    \xymatrix{
      \Hom{\Rep{Q}{\cM}}{\f_i(M)}{X} \ar@<0.5ex>[r]^-{u} & 
      \Hom{\cM}{M}{\e_i(X)} \ar@<0.5ex>[l]^-{v}
    }
  \end{displaymath}  
  as follows. The map $u$ sends a morphism $\lambda \colon \f_i(M) \to X$ of representations to the morphism $u(\lambda):=\lambda(i)\circ\varepsilon_{e_i}$ in $\cM$, that is, the composition of the morphisms
  \begin{equation}
    \label{eq:u}
    \xymatrix{
      M = M_{e_i} \ar[r]^-{\varepsilon_{e_i}} & \coprod_{p \in Q(i,i)} M_p = \f_i(M)(i) \ar[r]^-{\lambda(i)} & X(i) = \e_i(X)\;,
    }
  \end{equation}  
  where $e_i$ is the trivial path at vertex $i$. To define the map $v$, let $\alpha \colon M \to \e_i(X) = X(i)$ be a morphism in $\cM$. For every vertex $j \in Q_0$ we define a morphism $\lambda(j) \colon \f_i(M)(j) \to X(j)$ as follows. If there are no paths from $i$ to $j$, then $Q(i,j)$ is empty and hence $\f_i(M)(j)$ is the initial object in $\cM$. In this case, $\lambda(j)$ is the unique morphism from the initial object~to~$X(j)$. Suppose that there exists a path from $i$ to $j$.  Any such path $p \in Q(i,j)$ yields~a~morphism $X(p) \colon X(i) \to X(j)$, and we define $\lambda(j)$ to be the unique morphism that makes the following diagrams commutative:
\begin{equation}
  \label{eq:lambda-1}
  \begin{gathered}
  \xymatrix{
     M \ar[d]_-{\varepsilon_p} \ar[r]^-{\alpha} & X(i) \ar[d]^-{X(p)}
     \\
     \f_i(M)(j) \ar[r]^{\lambda(j)} & X(j)\,.
   }
  \end{gathered}    
  \qquad (p \in Q(i,j))
\end{equation}   
To see that the hereby constructed family $\{\lambda(j)\}_{j \in Q_0}$ yields a morphism of representations $v(\alpha):=\lambda \colon \f_i(M) \to X$, we must argue that for every arrow \mbox{$a \colon\! j \to k$} in $Q$, the diagram
  \begin{equation}
    \label{eq:morphism}
    \begin{gathered}
    \xymatrix@C=2.5pc{
      \f_i(M)(j) \ar[d]_{\lambda(j)} \ar[r]^-{\f_i(M)(a)} & \f_i(M)(k) \ar[d]^{\lambda(k)}
      \\
      X(j) \ar[r]^-{X(a)} & X(k)
    }
    \end{gathered}    
  \end{equation}   
  is commutative. This is clear if there are no paths from $i$ to $j$, as in this case $\f_i(M)(j)$ is the initial object in $\cM$. If there exists some path from $i$ to $j$, then commutativity of \eqref{morphism} amounts, by the universal property of the coproduct, to showing that $X(a)\circ\lambda(j)\circ\varepsilon_p = \lambda(k)\circ\f_i(M)(a)\circ\varepsilon_p$ for every $p \in Q(i,j)$. This follows from the defining properties \eqref{lambda-1} of~$\lambda$ and \eqref{f} of $\f_i(M)$, indeed, one has:
\begin{displaymath}  
   X(a)\circ\lambda(j)\circ\varepsilon_p = X(a)\circ X(p)\circ\alpha = X(ap)\circ\alpha = \lambda(k)\circ\varepsilon_{ap} = \lambda(k)\circ\f_i(M)(a)\circ\varepsilon_p\;.
\end{displaymath}  

It is clear that the hereby constructed maps $u$ and $v$ are natural in $M$ and $X$, and it remains to prove that they are inverses of each other: 

Let $\alpha \colon M \to X(i)$ be a morphism and set $\lambda:=v(\alpha)$. By \eqref{u} the morphism $u(\lambda)=uv(\alpha)$ is $\lambda(i)\circ\varepsilon_{e_i}$, which by \eqref{lambda-1} is $X(e_i)\circ\alpha = \alpha$. Hence the composition $uv$ is the identity. 

Conversely, let $\lambda \colon \f_i(M) \to X$ be a morphism and set $\alpha:=u(\lambda)=\lambda(i)\circ\varepsilon_{e_i}$. To prove that $\tilde{\lambda} := v(\alpha) = vu(\lambda)$ is equal to $\lambda$, it must be argued that $\tilde{\lambda}(j)$ and $\lambda(j)$ is the same morphism $\f_i(M)(j) \to X(j)$ for every $j \in Q_0$. If there are no paths from $i$ to $j$, then $\f_i(M)(j)$ is the initial object in $\cM$, so evidently $\tilde{\lambda}(j)=\lambda(j)$. If there exists a path from $i$ to $j$, then for every such path $p \in Q(i,j)$ we have
\begin{displaymath}
  \tilde{\lambda}(j)\circ\varepsilon_p = X(p)\circ\alpha = 
  X(p)\circ\lambda(i)\circ\varepsilon_{e_i} = \lambda(j)\circ\f_i(M)(p)\circ\varepsilon_{e_i} = \lambda(j)\circ\varepsilon_p\;, 
\end{displaymath}
where the first equality is by the defining property \eqref{lambda-1} of $\tilde{\lambda} = v(\alpha)$, the second equality is by the definition of $\alpha$, the third equality holds as $\lambda$ is a morphism of quiver representations, and the fourth and last equality follows from \lemref{f-path}. By the universal property of the coproduct, we now conclude that $\tilde{\lambda}(j)=\lambda(j)$.

\proofoftag{b} Consider the evaluation functor \smash{$\e_i=\e_i^{Q,\cM} \colon \Rep{Q}{\cM} \to \cM$}. In view of \ref{op}, its opposite functor \smash{$(\e_i^{Q,\cM})^\mathrm{op}$} can be identified with the evaluation functor
\begin{displaymath}
  \e_i^{Q^\mathrm{op},\cM^\mathrm{op}} \colon \Rep{Q^\mathrm{op}}{\cM^\mathrm{op}} 
  \longrightarrow \cM^\mathrm{op}\;.
\end{displaymath}  
By part (a), this functor has a left adjoint, namely \smash{$\f_i^{Q^\mathrm{op},\cM^\mathrm{op}}$}\!, so it follows from \lemref{adjoint} below that the functor \smash{$(\f_i^{Q^\mathrm{op},\cM^\mathrm{op}})^\mathrm{op}$} is a right adjoint of $\e_i=\e_i^{Q,\cM}$. However, \smash{$(\f_i^{Q^\mathrm{op},\cM^\mathrm{op}})^\mathrm{op}$} is equal to $\g_i$ by \ref{g}.
\end{proof}

\begin{lem}
  \label{lem:adjoint}
  Let $F \colon \cC \to \cD$ be a functor. If the opposite functor $F^\mathrm{op} \colon \cC^\mathrm{op} \to \cD^\mathrm{op}$ has a left adjoint $G \colon \cD^\mathrm{op} \to \cC^\mathrm{op}$, then the functor $G^\mathrm{op} \colon \cD \to \cC$ is a right adjoint of $F$.
\end{lem}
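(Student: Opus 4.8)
The plan is purely formal: I would unwind the hypothesis that $G$ is a left adjoint of $F^\mathrm{op}$ as a natural isomorphism of hom-sets, and then rewrite that isomorphism using the tautological identity $\Hom{\cE^\mathrm{op}}{A}{B}=\Hom{\cE}{B}{A}$ (valid in any category $\cE$), together with the facts that $F^\mathrm{op}$ and $G^\mathrm{op}$ agree with $F$ and $G$ on objects and that $(H^\mathrm{op})^\mathrm{op}=H$ for any functor $H$; see \cite[Chap.~II\S2]{Mac}. Concretely, the adjunction $G\dashv F^\mathrm{op}$ supplies, for each object $c$ of $\cC$ and each object $d$ of $\cD$, a bijection
\[
  \theta_{c,d}\colon \Hom{\cC^\mathrm{op}}{G(d)}{c}\;\xrightarrow{\ \cong\ }\;\Hom{\cD^\mathrm{op}}{d}{F^\mathrm{op}(c)}\,,
\]
natural in both arguments. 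Applying $\Hom{\cE^\mathrm{op}}{A}{B}=\Hom{\cE}{B}{A}$ to each side, and using $F^\mathrm{op}(c)=F(c)$ and $G^\mathrm{op}(d)=G(d)$ on objects, this becomes a bijection
\[
  \theta_{c,d}\colon \Hom{\cC}{c}{G^\mathrm{op}(d)}\;\xrightarrow{\ \cong\ }\;\Hom{\cD}{F(c)}{d}\,,
\]
and a family of such bijections, natural in $c$ and $d$, is precisely the data exhibiting $G^\mathrm{op}$ as a right adjoint of $F$. So the only thing left to check is the naturality.

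For that I would observe that, viewed as functors valued in sets, $\Hom{\cC^\mathrm{op}}{G(-)}{-}$ and $\Hom{\cD^\mathrm{op}}{-}{F^\mathrm{op}(-)}$ are both functors $\cD\times\cC^\mathrm{op}\to\mathbf{Set}$ — covariant in the $\cD$-variable and contravariant in the $\cC$-variable — while $\Hom{\cC}{-}{G^\mathrm{op}(-)}$ and $\Hom{\cD}{F(-)}{-}$ are functors $\cC^\mathrm{op}\times\cD\to\mathbf{Set}$ with the same variances; under the identifications above the two pairs are literally the same functors (up to the symmetry of the product). Consequently the naturality square for $\theta$ attached to a morphism $c\to c'$ in $\cC^\mathrm{op}$ (respectively $d\to d'$ in $\cD^\mathrm{op}$) is exactly the naturality square attached to the corresponding morphism $c'\to c$ in $\cC$ (respectively $d'\to d$ in $\cD$), so naturality over $\cC^\mathrm{op}$ and $\cD^\mathrm{op}$ is the same statement as naturality over $\cC$ and $\cD$, and the proof is complete.

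I do not expect a genuine obstacle here; the lemma is formal, and the only point that requires a moment's attention is the bookkeeping of the variances in the preceding paragraph. If one prefers to avoid hom-set manipulations altogether, an equally short alternative is to take the counit $GF^\mathrm{op}\Rightarrow\mathrm{id}_{\cC^\mathrm{op}}$ and unit $\mathrm{id}_{\cD^\mathrm{op}}\Rightarrow F^\mathrm{op}G$ of the given adjunction, pass to opposite natural transformations to obtain a candidate unit $\mathrm{id}_{\cC}\Rightarrow G^\mathrm{op}F$ and counit $FG^\mathrm{op}\Rightarrow\mathrm{id}_{\cD}$, and note that the two triangle identities of the original adjunction transform verbatim into the two triangle identities for $F\dashv G^\mathrm{op}$.
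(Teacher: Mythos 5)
Your proof is correct and takes essentially the same route as the paper: rewrite the hom-set bijection coming from the adjunction $G\dashv F^\mathrm{op}$ using $\Hom{\cE^\mathrm{op}}{A}{B}=\Hom{\cE}{B}{A}$ and the fact that $F^\mathrm{op}$, $G^\mathrm{op}$ agree with $F$, $G$ on objects. The paper compresses the naturality bookkeeping into the phrase ``by the definitions,'' whereas you spell it out (and add a unit/counit alternative), but it is the same argument.
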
                                                          

\begin{proof}
  As $G$ is a left adjoint of $F^\mathrm{op}$, there is a bijection $\Hom{\cC^\mathrm{op}}{GY}{X} \cong \Hom{\cD^\mathrm{op}}{Y}{F^\mathrm{op}X}$, which is natural in $X \in \cC$ and $Y \in \cD$.
  By the definitions, this is the same as a bijection $\Hom{\cC}{X}{G^\mathrm{op}Y} \cong \Hom{\cD}{FX}{Y}$,  which expresses that $G^\mathrm{op}$ is a right adjoint of $F$.
\end{proof}

It is convenient to recall some of Grothendieck's axioms for abelian categories.

\begin{ipg}
\label{Grothendiecks-axioms}
An abelian category satisfies AB3 if it has small coproducts, equivalently, if it is co\-com\-plete. It satisfies AB4 it if satisfies AB3 and any coproduct of monomorphisms is a monomorphism. The axioms AB$3^*$ and AB$4^*$ are dual to AB3 and AB4.
\end{ipg}

As noted in \ref{inherited-structures}, the category $\Rep{Q}{\cM}$ inherits various types of categorical properties from $\cM$. The next result, which is a consequence of \thmref{adjoints-of-e}, has the same flavor.

\begin{cor}
  \label{cor:enough-projectives}
  Let $\cM$ be any abelian category and let $Q$ be any quiver.
  \begin{prt}
  \item \hspace*{-0.4ex}Assume that $\cM$ satisfies \textnormal{AB3}. If $\cM$ has enough projectives, then so does $\Rep{Q}{\cM}$.
  
  \item \hspace*{-0.4ex}Assume that $\cM$ satisfies \textnormal{AB3$^*$}. If $\cM$ has enough injectives, then so does $\Rep{Q}{\cM}$.  
  
  \end{prt}
\end{cor}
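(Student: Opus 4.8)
The plan for part (a) is to exploit the adjunction $\f_i \dashv \e_i$ furnished by \thmref{adjoints-of-e}, which is available since AB3 gives $\cM$ small coproducts. First I would observe that each $\f_i$ sends projectives to projectives: the evaluation functor $\e_i$ is exact by \ref{inherited-structures}, so for $P$ projective in $\cM$ the functor $\Hom{\Rep{Q}{\cM}}{\f_i(P)}{-} \cong \Hom{\cM}{P}{\e_i(-)}$ is a composite of exact functors, whence $\f_i(P)$ is projective in $\Rep{Q}{\cM}$. Now, given an arbitrary $X \in \Rep{Q}{\cM}$, for each vertex $i \in Q_0$ choose an epimorphism $\pi_i \colon P_i \twoheadrightarrow X(i)$ in $\cM$ with $P_i$ projective, and let $\mu^{(i)} \colon \f_i(P_i) \to X$ be the morphism adjoint to $\pi_i$ (that is, $\mu^{(i)} = v(\pi_i)$ in the notation of the proof of \thmref{adjoints-of-e}). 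Since $\cM$ satisfies AB3, hence is cocomplete, and colimits in $\Rep{Q}{\cM}$ are computed vertex-wise by \ref{inherited-structures}, the coproduct $P := \coprod_{i \in Q_0} \f_i(P_i)$ exists in $\Rep{Q}{\cM}$; let $\mu \colon P \to X$ be the morphism whose component at the summand $\f_i(P_i)$ is $\mu^{(i)}$.

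It then remains to verify that $P$ is projective and that $\mu$ is an epimorphism. For projectivity, a coproduct of projectives is projective by a direct lifting argument: a morphism out of $\coprod_{i} \f_i(P_i)$ is the same as a family of morphisms out of the $\f_i(P_i)$, and each of these lifts along any prescribed epimorphism; so no appeal to AB4 or AB4$^*$ is needed here. For the epimorphism claim, since exactness in $\Rep{Q}{\cM}$ is tested vertex-wise (\ref{inherited-structures}), it suffices to show that $\e_j(\mu)$ is epi in $\cM$ for every $j \in Q_0$. As $\e_j$ preserves coproducts, the restriction of $\e_j(\mu)$ along the canonical injection $\f_j(P_j)(j) \hookrightarrow \e_j(P)$ equals $\e_j(\mu^{(j)}) = \mu^{(j)}(j)$, and the explicit form of the adjunction in the proof of \thmref{adjoints-of-e} (namely, $u(\lambda) = \lambda(i)\circ\varepsilon_{e_i}$ and $v(\alpha)(j)\circ\varepsilon_p = X(p)\circ\alpha$) gives $\mu^{(j)}(j)\circ\varepsilon_{e_j} = X(e_j)\circ\pi_j = \pi_j$, an epimorphism. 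Hence $\mu^{(j)}(j)$ is epi, so $\e_j(\mu)$ is epi, so $\mu$ is epi, and $\Rep{Q}{\cM}$ has enough projectives.

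Part (b) I would deduce by duality: by \ref{op} one has $\Rep{Q}{\cM}^\mathrm{op} = \Rep{Q^\mathrm{op}}{\cM^\mathrm{op}}$, the category $\cM^\mathrm{op}$ is abelian and satisfies AB3 (since $\cM$ satisfies AB3$^*$), and $\cM^\mathrm{op}$ has enough projectives precisely because $\cM$ has enough injectives. Applying part (a) to the quiver $Q^\mathrm{op}$ and the category $\cM^\mathrm{op}$ shows that $\Rep{Q^\mathrm{op}}{\cM^\mathrm{op}} = \Rep{Q}{\cM}^\mathrm{op}$ has enough projectives, i.e.\ that $\Rep{Q}{\cM}$ has enough injectives.

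The argument is essentially formal, and I do not anticipate a serious obstacle; the one point that genuinely requires the work done earlier is the verification that the assembled map $\mu$ out of the coproduct is an epimorphism, which is where the explicit description of the adjunction unit from \thmref{adjoints-of-e} is used together with exactness of $\e_j$ and the fact that it preserves coproducts.
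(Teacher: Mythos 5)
Your proposal is correct and follows essentially the same path as the paper: establish that each $\f_i$ sends projectives to projectives via $\Hom{\Rep{Q}{\cM}}{\f_i(P)}{-} \cong \Hom{\cM}{P}{\e_i(-)}$, cover $X$ by the coproduct $\coprod_i \f_i(P_i)$ of adjuncts of vertex-wise projective covers, and check the map is an epimorphism vertex by vertex, with part (b) obtained by applying (a) to $Q^{\mathrm{op}}$ and $\cM^{\mathrm{op}}$. The only cosmetic difference is that you verify the epimorphism claim by the explicit formula $\mu^{(j)}(j)\circ\varepsilon_{e_j} = \pi_j$ from the proof of \thmref{adjoints-of-e}, whereas the paper instead invokes the general facts that $\e_j(\epsilon_j^X)$ is a split epimorphism and $\e_j\f_j$ is right exact; these are two ways of stating the same computation.
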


\begin{proof}
  \proofoftag{a} As explained in \ref{inherited-structures}, each evaluation functor $\e_i$ is exact, and by \thmref{adjoints-of-e} it has a left adjoint $\f_i$. It follows that if $P$ is a projective object in $\cM$, then $\f_i(P)$ is projective in $\Rep{Q}{\cM}$ since the functor $\Hom{\Rep{Q}{\cM}}{\f_i(P)}{-} \cong \Hom{\cM}{P}{\e_i(-)}$ is exact. Now, let $X$ be any object in $\Rep{Q}{\cM}$. Since $\cM$ has enough projectives there exists for each $i \in Q_0$ an epimorphism $\pi_i \colon P_i \twoheadrightarrow X(i)=\e_i(X)$ in $\cM$ with $P_i$ projective. Let $\rho$ be the unique morphism in $\Rep{Q}{\cM}$ that makes the following diagrams commutative:
\begin{displaymath}  
  \begin{gathered}
  \xymatrix{
    \f_i(P_i) \ar[d]_-{\textnormal{$i^\mathrm{th}$ injection}} \ar[r]^-{\f_i(\pi_i)} & \f_i\e_i(X) \ar[d]^-{\epsilon_i^X}
    \\
    \bigoplus_{j \in Q_0}\f_j(P_j) \ar[r]^-{\rho} & X
  }
  \end{gathered}
  \qquad (i \in Q_0)\,,
\end{displaymath}  
where $\epsilon_i$ is the counit of the adjunction $\f_i \dashv \e_i$. As noted above, each $\f_j(P_j)$ is projective in $\Rep{Q}{\cM}$ and hence so is the coproduct $\bigoplus_{j \in Q_0}\f_j(P_j)$. We claim that $\rho$ is an epimorphism. It suffices to show that $\rho(i)=\e_i(\rho)$ is an epimorphism for every $i \in Q_0$, as cokernels in $\Rep{Q}{\cM}$ are computed vertex-wise; see \ref{inherited-structures}. By applying $\e_i$ to the diagram above, we see that $\e_i(\rho)$ will be an epimorphism if $\e_i(\epsilon_i^X)\circ \e_i\f_i(\pi_i)$ is an epimorphism. However, $\e_i\f_i(\pi_i)$ is an epimorphism as $\pi_i$ is an epimorphism and the functor $\e_i\f_i$ is right exact (as already noted, $\e_i$ is exact, and $\f_i$ is right exact since it is a left adjoint). And it is well-known, see e.g.~\cite[Chap.~IV, Thm.~1]{Mac}, that $\e_i(\epsilon_i^X)$ is a split epimorphism with right-inverse \smash{$\eta_i^{\e_i(X)}$}, where
$\eta_i$ is the unit of the adjunction $\f_i \dashv \e_i$.

  \proofoftag{b} Dual to the proof of (a). Alternatively, apply part (a) to the opposite quiver $Q^\mathrm{op}$ and the opposite category $\cM^\mathrm{op}$ and invoke \ref{op}.
\end{proof}

\section{Adjoints of the stalk functor $\s_i$}
\label{sec:adj-s}

As stated in \secref{Quivers}, we work with an arbitrary quiver $Q$. Furthermore, in this section, $\cM$ denotes any abelian category. We will show that if $\cM$ satisfies AB3, respectively, AB3$^*$ (see \ref{Grothendiecks-axioms}), then the stalk functor $\s_i \colon \cM \to \Rep{Q}{\cM}$ from \ref{e-and-s} has a left adjoint $\c_i$, respectively, a right adjoint $\k_i$. For the next construction, recall the notation from \ref{finiteness-conditions}.

\begin{ipg}
  \label{c}
  Assume that $\cM$ satisfies AB3 and fix any vertex $i \in Q_0$. For each $X \in \Rep{Q}{\cM}$ we denote by $\varphi^X_i$ the unique morphism in $\cM$ that makes the following diagrams commutative:
\begin{displaymath}
  \begin{gathered}
  \xymatrix{
    X(\sou{a}) \ar[d]_-{\varepsilon_a} \ar[dr]^-{X(a)} & {} \\
    \bigoplus_{a \in \arrt{i}} X(\sou{a}) \ar[r]_-{\varphi^X_i} & X(i)
  }
  \end{gathered}  
  \qquad (a \in \arrt{i})\,.
\end{displaymath}
Here $\varepsilon_a$ denotes the canonical injection. It is clear that the assignment $X \mapsto \varphi^X_i$ is a functor from $\Rep{Q}{\cM}$ to the category of morphisms in $\cM$, and thus one has a functor
\begin{displaymath}
  \c_i = \c_i^{Q,\cM} \colon \Rep{Q}{\cM} \longrightarrow \cM
  \qquad \text{given by} \qquad X \longmapsto \Coker{\varphi^X_i}\;.
\end{displaymath}
\end{ipg}

\begin{rmk}
  \label{rmk:target-finite}
  For the construction of the functors $\c_i$ to work, it is not necessary to require that $\cM$ has \textsl{all} small coproducts; it suffices to assume that the coproduct exists in $\cM$ for every set of objects $\{M_u\}_{u \in U}$ with cardinality $|U| = |\arrt{i}|$ for some $i \in Q_0$. 
  
  A quiver $Q$ is called \emph{target-finite} if every vertex in $Q$ is the target of at most finitely many arrows, that is, if the set $\arrt{i}$ is finite for every vertex $i$. For such a quiver, the~functors $\c_i \colon \cM \to \Rep{Q}{\cM}$ exist for \textsl{any} abelian category $\cM$.
\end{rmk}

\begin{exa}
    Let $Q$ be the quiver 
  \begin{displaymath}
    \xymatrix@C=1.2pc{
      \underset{1}{\bullet} \ar@<0.5ex>[r] \ar@<-0.5ex>[r] & \underset{2}{\bullet}}.
  \end{displaymath}
  For an $\cM$-valued representation \smash{$X = \xymatrix@C=1.2pc{X(1) \ar@<0.5ex>[r]^-{\alpha} \ar@<-0.5ex>[r]_-{\beta} & X(2)}$}of $Q$ we have
  \begin{displaymath}
    \c_1(X) = \Coker\big(0 \to X(1)\big) = X(1)
    \quad \text{ and } \quad
    \c_2(X) = \Coker\bigg(
    \xymatrix{{\def\arraystretch{0.9}\begin{matrix} X(1) \\ \oplus \\ X(1) \end{matrix}} \ar[r]^-{\left(\begin{smallmatrix} \alpha & \beta \end{smallmatrix}\right)} & X(2)}\!\!\!\bigg)\,.
  \end{displaymath}
  For this quiver, the functors $\c_1$ and $\c_2$ exist for any abelian category $\cM$; cf.~\rmkref{target-finite}.
\end{exa}

\begin{ipg}
  \label{k}
  Assume that $\cM$ satisfies AB3$^*$ and fix any vertex $i \in Q$. For each $X \in \Rep{Q}{\cM}$ we denote by $\psi^X_i$ the unique morphism in $\cM$ that makes the following diagrams commutative:
\begin{displaymath}
  \begin{gathered}
  \xymatrix{
    X(i) \ar[dr]_-{X(a)} \ar[r]^-{\psi^X_i} & 
    \prod_{a \in \arrs{i}} X(\tar{a}) \ar[d]^-{\pi_a} \\
    {} & X(\tar{a})\,.
  }
  \end{gathered}  
  \qquad (a \in \arrs{i})
\end{displaymath}
Here $\pi_a$ denotes the canonical projection. It is clear that we get a functor
\begin{displaymath}
  \k_i = \k_i^{Q,\cM} \colon \Rep{Q}{\cM} \longrightarrow \cM
  \qquad \text{given by} \qquad X \longmapsto \Ker{\psi^X_i}\;.
\end{displaymath}
In analogy with the considerations in \ref{g}, one sees that $\k_i=\k_i^{Q,\cM}$ is equal to $(\c_i^{Q^\mathrm{op},\cM^\mathrm{op}})^\mathrm{op}$.
\end{ipg}

\begin{thm}
  \label{thm:adjoints-of-s}
  Let $\cM$ be any abelian category, let $i$ be any vertex in a quiver $Q$, and consider the stalk functor $\s_i \colon \cM \to \Rep{Q}{\cM}$ from \ref{e-and-s}. The following assertions hold.
\begin{prt}
\item If $\cM$ satisfies \textnormal{AB3}, then the functor $\c_i$ from \ref{c} is a left adjoint of $\s_i$.

\item If $\cM$ satisfies \textnormal{AB3$^*$}, then the functor $\k_i$ from \ref{k} is a right adjoint of $\s_i$.

\end{prt}
\end{thm}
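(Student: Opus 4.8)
The plan is to establish (a) by exhibiting a bijection
\[
  \Hom{\cM}{\c_i(X)}{M} \;\cong\; \Hom{\Rep{Q}{\cM}}{X}{\s_i(M)}
\]
natural in $X \in \Rep{Q}{\cM}$ and $M \in \cM$, and then to deduce (b) from (a) by the same duality device already used in the proof of \thmref{adjoints-of-e}(b).

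For (a), fix $X$ and $M$. The starting point is that a morphism $\lambda \colon X \to \s_i(M)$ of representations is completely determined by its component $\lambda(i) \colon X(i) \to M$: since $\s_i(M)(j) = 0$ for $j \neq i$, all other components vanish. Next, among the commutativity squares that $\lambda$ must satisfy, every square attached to an arrow whose target is \emph{not} $i$ is vacuous (such an arrow is sent to a zero morphism by $\s_i(M)$ and its codomain component of $\lambda$ is zero), while the square attached to an arrow $a \in \arrt{i}$ --- including a loop at $i$, should one occur --- reads $\lambda(i) \circ X(a) = 0$. By the universal property of the coproduct $\bigoplus_{a \in \arrt{i}} X(\sou{a})$ these equations, taken together, amount to the single equation $\lambda(i) \circ \varphi^X_i = 0$; and by the universal property of the cokernel $\c_i(X) = \Coker{\varphi^X_i}$ such morphisms $\lambda(i)$ correspond, via precomposition with the canonical surjection $\pi^X_i \colon X(i) \twoheadrightarrow \c_i(X)$, bijectively to morphisms $\c_i(X) \to M$. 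Splicing these two bijections gives the desired isomorphism, and naturality in $X$ and $M$ follows from the functoriality of $\c_i$ and $\s_i$ together with the naturality of the two universal properties invoked.

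The one step that needs to be carried out with some care is the reverse passage: given $g \colon \c_i(X) \to M$, one must check that the family $\lambda$ defined by $\lambda(i) = g \circ \pi^X_i$ and $\lambda(j) = 0$ for $j \neq i$ genuinely is a morphism of representations. This reduces to the case analysis over arrows sketched above; the only substantive instances are arrows $a \in \arrt{i}$, where one needs $g \circ \pi^X_i \circ X(a) = 0$, and this holds because $X(a)$ factors through $\varphi^X_i$ (by the defining property of $\varphi^X_i$ in \ref{c}) while $\pi^X_i \circ \varphi^X_i = 0$. I expect this verification --- pure bookkeeping rather than anything conceptual --- to be the only real obstacle; one could equally well phrase the whole argument via an explicit unit $X \to \s_i\c_i(X)$ (the $i$-th component being $\pi^X_i$) and counit $\c_i\s_i(M) \to M$ (which turns out to be an isomorphism, since $\varphi^{\s_i(M)}_i = 0$), but the bijection formulation seems the most economical.

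For (b), I would argue as in the proof of \thmref{adjoints-of-e}(b): in view of \ref{op} the opposite of the stalk functor $\s_i = \s_i^{Q,\cM}$ is the stalk functor $\s_i^{Q^{\mathrm{op}},\cM^{\mathrm{op}}}$, and $\cM^{\mathrm{op}}$ satisfies AB3 because $\cM$ satisfies AB3$^*$. Applying (a) over $Q^{\mathrm{op}}$ and $\cM^{\mathrm{op}}$ shows that $\c_i^{Q^{\mathrm{op}},\cM^{\mathrm{op}}}$ is a left adjoint of $\s_i^{Q^{\mathrm{op}},\cM^{\mathrm{op}}}$, so by \lemref{adjoint} the functor $(\c_i^{Q^{\mathrm{op}},\cM^{\mathrm{op}}})^{\mathrm{op}}$ is a right adjoint of $\s_i$; and $(\c_i^{Q^{\mathrm{op}},\cM^{\mathrm{op}}})^{\mathrm{op}}$ is $\k_i$ by \ref{k}. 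Alternatively, one may simply dualize the proof of (a) verbatim, using kernels and $\psi^X_i$ in place of cokernels and $\varphi^X_i$.
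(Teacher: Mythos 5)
Your proposal is correct and follows essentially the same route as the paper's proof: construct the bijection $\Hom{\cM}{\c_i(X)}{M} \cong \Hom{\Rep{Q}{\cM}}{X}{\s_i(M)}$ by using the universal property of the coproduct to collapse the commutativity constraints to the single equation $\lambda(i)\circ\varphi^X_i=0$ and then the universal property of the cokernel, and deduce (b) by the opposite-quiver/opposite-category device. The only cosmetic difference is that you also sketch a unit/counit reformulation, which the paper does not mention.
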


\begin{proof}
  \proofoftag{a} For $X \in \Rep{Q}{\cM}$ and $M \in \cM$ we construct below a pair of natural maps
  \begin{displaymath}
    \xymatrix{
      \Hom{\cM}{\c_i(X)}{M} \ar@<0.5ex>[r]^-{u} & 
      \Hom{\Rep{Q}{\cM}}{X}{\s_i(M)} \ar@<0.5ex>[l]^-{v}
    }.
  \end{displaymath}  
  By definition, see \ref{c}, one has $\c_i(X)=\Coker{\varphi^X_i}$, so there is a right exact sequence,
  \begin{displaymath}
    \xymatrix{
      \bigoplus_{a \in \arrt{i}} X(\sou{a}) \ar[r]^-{\varphi^X_i} & 
      X(i) \ar[r]^-{\rho^X_i} & \c_i(X) \ar[r] & 0
    },
  \end{displaymath}  
  where \smash{$\rho^X_i$} is the canonical morphism. 
  
  The map $u$ sends a morphism $\alpha \colon \c_i(X) \to M$ in $\cM$ to the morphism $\lambda \colon X \to \s_i(M)$ defined as follows: For every $j \in Q_0$ with $j \neq i$ one has $\s_i(M)(j)=0$ and we set $\lambda(j)=0$. One also has $\s_i(M)(i)=M$ and we set $\lambda(i) = \alpha\rho^X_i$. We must argue that $\lambda$ is a morphism of representations of $Q$, that is, we must show that  $\lambda(k)\circ X(a) = \s_i(M)(a)\circ\lambda(j)$ for every arrow \mbox{$a \colon\! j \to k$}. Since $\s_i(M)(a)=0$ (always) and $\lambda(k)=0$ for $k\neq i$, the only thing that needs to be checked is that $\lambda(i)\circ X(a)=0$ for all arrows \mbox{$a \colon\! j \to i$}, that is, for all $a \in \arrt{i}$. However, for every such arrow $a$ we have by definition $\lambda(i)\circ X(a)=\alpha\rho^X_i\varphi^X_i\varepsilon_a=\alpha\mspace{1mu}0\mspace{1mu}\varepsilon_a=0$.
  
  For a morphism $\lambda \colon X \to \s_i(M)$ in $\Rep{Q}{\cM}$ we have 
$\lambda(k)\circ X(a) = 0$ for every arrow \mbox{$a \colon\! j \to k$} in $Q$. In particular, the morphism $\lambda(i) \colon X(i) \to M$ satisfies $\lambda(i)\circ\varphi^X_i\varepsilon_a=\lambda(i)\circ X(a)=0$ for every $a \in \arrt{i}$. By the universal property of the coproduct, it follows that $\lambda(i)\circ\varphi^X_i=0$. Thus by the universal property of the cokernel, $\lambda(i)$ factors uniquely through the morphism $\rho^X_i \colon X(i) \to \c_i(X)=\Coker{\varphi^X_i}$. That is, there exists a unique morphism \smash{$\overline{\lambda(i)} \colon \c_i(X) \to M$} such that \smash{$\overline{\lambda(i)}\circ\rho^X_i = \lambda(i)$}. We define $v(\lambda)$ to be this morphism \smash{$\overline{\lambda(i)}$}.

It is clear that the hereby constructed maps $u$ and $v$ are natural in $X$ and $M$, and that they are inverses of each other.

\proofoftag{b} Dual to the proof of (a). Alternatively, in view of \ref{k} and \lemref{adjoint}, part (b)~fol\-lows directly by applying (a) to the opposite quiver $Q^\mathrm{op}$ and the opposite category $\cM^\mathrm{op}$.\!~\qedhere
\end{proof}

\section{Isomorphisms of groups of extensions}
\label{sec:Ext}

In this section, we extend the adjunctions in \thmref[Theorems~]{adjoints-of-e} and \thmref[]{adjoints-of-s} to the level of Ext. The following lemma is the key to our results.

\begin{lem}
  \label{lem:Ext1-general}
  Let $F \colon \cA \to \cB$ and $G \colon \cB \to \cA$ be functors between abelian categories where $F$ is a left adjoint of $G$. Fix an integer $n \geqslant 0$ and objects $A \in \cA$ and $B \in \cB$. Assume that:
\begin{prt}
\item[\textnormal{(1)}] The functor $F$ maps every exact sequence $0 \to GB \to D_1 \to \cdots \to D_n \to A \to 0$ in~$\cA$ to an exact sequence $0 \to FGB \to FD_1 \to \cdots \to FD_n \to FA \to 0$, and

\item[\textnormal{(2)}] The functor $G$ maps every exact sequence $0 \to B \to E_1 \to \cdots \to E_n \to FA \to 0$ in $\cB$ to an exact sequence $0 \to GB \to GE_1 \to \cdots \to GE_n \to GFA \to 0$.

\end{prt}  
Then there is an isomorphism of abelian groups, $\Ext{\cB}{n}{FA}{B} \cong \Ext{\cA}{n}{A}{GB}$.
\end{lem}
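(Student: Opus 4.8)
The plan is to compute both Ext groups using explicit $n$-extension representatives (Yoneda Ext) and to transport extensions back and forth along the adjunction $F \dashv G$, using hypotheses (1) and (2) to guarantee that the transported sequences remain exact. Concretely, given an exact sequence $\xi \colon 0 \to GB \to D_1 \to \cdots \to D_n \to A \to 0$ in $\cA$ representing a class in $\Ext{\cA}{n}{A}{GB}$, hypothesis (1) says $F\xi \colon 0 \to FGB \to FD_1 \to \cdots \to FD_n \to FA \to 0$ is exact in $\cB$; composing the left-hand end $FGB \to FD_1$ with the counit $\varepsilon_B \colon FGB \to B$ and forming the pushout, one obtains an $n$-extension of $FA$ by $B$, hence a class in $\Ext{\cB}{n}{FA}{B}$. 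This defines a map $\Ext{\cA}{n}{A}{GB} \to \Ext{\cB}{n}{FA}{B}$; symmetrically, hypothesis (2) together with the unit $\eta_A \colon A \to GFA$ defines a map in the other direction (apply $G$ to an $n$-extension of $FA$ by $B$, then pull back along $\eta_A$). I would first check these constructions respect the Baer sum and the Yoneda equivalence relation on $n$-extensions, so that they descend to well-defined group homomorphisms.

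The next step is to verify that the two maps are mutually inverse. This is where the triangle identities for the adjunction enter: starting from $\xi$ in $\cA$, applying $F$, pushing out along $\varepsilon_B$, then applying $G$ and pulling back along $\eta_A$, one should recover a sequence Yoneda-equivalent to $\xi$. The key algebraic input is the commutative diagram relating $\xi$, $GF\xi$, the unit $\eta \colon \mathrm{id}_\cA \Rightarrow GF$ evaluated on the terms of $\xi$, and the identity $G\varepsilon_B \circ \eta_{GB} = \mathrm{id}_{GB}$; naturality of $\eta$ gives a morphism of $n$-extensions from $\xi$ to $GF\xi$, and chasing the pullback/pushout square shows the round trip is the identity on Yoneda classes. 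The other composite is handled dually using $\varepsilon F \circ F\eta = \mathrm{id}_F$.

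An alternative, cleaner route — which I would mention but perhaps not pursue in full — is dimension shifting: pick a projective resolution $P_\bullet \to A$ in $\cA$ (the lemma is presumably applied in a setting with enough projectives) and an injective resolution $B \to I_\bullet$ in $\cB$, and use hypotheses (1) and (2) to move between $\Hom$ complexes, reducing to the case $n=0$ where the statement is just the adjunction isomorphism $\Hom{\cB}{FA}{B} \cong \Hom{\cA}{A}{GB}$ together with a comparison of the relevant cycles/boundaries. However, this requires the ambient categories to have enough projectives/injectives and requires the hypotheses to apply to the specific (possibly infinite) resolutions, so the Yoneda-extension argument is more self-contained and is what I would write up.

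The main obstacle I anticipate is the bookkeeping in the inverse-map verification: one must be careful that the pushout along $\varepsilon_B$ and the pullback along $\eta_A$ interact correctly with the Yoneda equivalence relation (which is generated by spans of morphisms of $n$-extensions, not just isomorphisms), and that hypotheses (1)--(2) are invoked at exactly the points where exactness of an $F$- or $G$-image is needed — in particular for the \emph{pushed-out} or \emph{pulled-back} sequences, not merely for $F\xi$ and $G\xi$ themselves. Making the diagram chase rigorous for general $n$ (as opposed to $n=1$) is the step that demands the most care.
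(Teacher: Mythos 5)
Your proposal is correct and follows essentially the same route as the paper: define $v$ as $F(-)$ followed by pushout along the counit $\varepsilon_B$, define $u$ as $G(-)$ followed by pullback along the unit $\eta_A$, use hypotheses (1) and (2) to guarantee that $F(-)$ and $G(-)$ land in genuine $n$-extensions, and verify via the triangle identities that $u$ and $v$ are mutually inverse (the paper too notes this verification is ``tedious but straightforward'' and leaves it as an exercise, citing the $n=0$ case to Mac Lane).
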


\begin{proof}
By the assumptions, the functors $F$ and $G$ yield well-defined group \mbox{homomorphisms} $F(-) \colon \Ext{\cA}{n}{A}{GB} \to \Ext{\cB}{n}{FA}{FGB}$ and $G(-) \colon \Ext{\cB}{n}{FA}{B} \to \Ext{\cA}{n}{GFA}{GB}$. Let $\eta$ be the unit and let $\varepsilon$ be the counit of the adjunction $F \dashv G$ and consider the group homomorphisms $u$ and $v$ given by the following compositions:
\begin{displaymath}
  \begin{gathered}
  \xymatrix@C=-2pc{
    \Ext{\cB}{n}{FA}{B} \ar[dr]_(0.4){G(-)} \ar[rr]^-{u} & & \Ext{\cA}{n}{A}{GB}
    \\
    & \Ext{\cA}{n}{GFA}{GB} \ar[ur]_(0.6){\Ext{\cA}{n}{\eta_A}{GB}} &
  }
  \end{gathered}  
  \qquad \text{and} \qquad
  \begin{gathered}
  \xymatrix@C=-2pc{
    \Ext{\cB}{n}{FA}{B} & & \Ext{\cA}{n}{A}{GB} \ar[ll]_-{v} \ar[dl]^(0.4){F(-)}
    \\
    & \Ext{\cB}{n}{FA}{FGB} \ar[ul]^(0.6){\Ext{\cB}{n}{FA}{\varepsilon_B}}\,. &
  }
  \end{gathered}    
\end{displaymath}  
It is tedious but straightforward to verify that $u$ and $v$ are inverses of each other (for $n=0$ this is a well-known fact, see \cite[Chap.~IV.1, Thm.~1]{Mac}), and we leave it as an exercise.
\end{proof}

The next result concerns the evaluation functor $\e_i$ and its adjoints $\f_i$ and $\g_i$ (\secref{adj-e}). 

\begin{prp}
  \label{prp:Ext-e}
  Let $\cM$ be any abelian category and let $i$ be any vertex in a quiver $Q$. 
\begin{prt}
\item Assume that $\cM$ satisfies \textnormal{AB4}. For all objects $M \in \cM$ and $X \in \Rep{Q}{\cM}$ and all integers $n \geqslant 0$ there is an isomorphism,
\begin{displaymath}
  \Ext{\Rep{Q}{\cM}}{n}{\f_i(M)}{X} \cong 
  \Ext{\cM}{n}{M}{\e_i(X)}\,.
\end{displaymath}  

\item Assume that $\cM$ satisfies \textnormal{AB4$^*$}. For all objects $M \in \cM$ and $X \in \Rep{Q}{\cM}$ and all integers $n \geqslant 0$ there is an isomorphism,
\begin{displaymath}
  \Ext{\Rep{Q}{\cM}}{n}{X}{\g_i(M)} \cong 
  \Ext{\cM}{n}{\e_i(X)}{M}\,.
\end{displaymath}  
\end{prt}
\end{prp}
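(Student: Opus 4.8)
The plan is to invoke \lemref{Ext1-general} for the adjunctions between $\e_i$ and its two adjoints supplied by \thmref{adjoints-of-e}, after checking that the relevant functors are exact; part (b) is then deduced from part (a) by passing to the opposite quiver and category.

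\emph{Part (a).} Since AB4 implies AB3, the category $\cM$ has small coproducts, so $\f_i$ is defined (\ref{f}) and is a left adjoint of $\e_i$ by \thmref{adjoints-of-e}(a). I would apply \lemref{Ext1-general} with $\cA=\cM$, $\cB=\Rep{Q}{\cM}$, $F=\f_i$, $G=\e_i$, $A=M$, $B=X$. To verify conditions (1) and (2) of that lemma it is enough to show that $\f_i$ and $\e_i$ are exact functors, since exact functors preserve \emph{every} exact sequence, in particular the ones of the special shape occurring in (1) and (2). The functor $\e_i$ is exact by \ref{inherited-structures}. For $\f_i$, recall from \ref{f} that $\f_i(M)(j)=\coprod_{p\in Q(i,j)}M_p$ is a coproduct of copies of $M$; thus for each $j\in Q_0$ the composite $\e_j\circ\f_i\colon\cM\to\cM$ is naturally isomorphic to the coproduct functor $\coprod_{Q(i,j)}$ (a $|Q(i,j)|$-fold copower of the identity). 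As exactness in $\Rep{Q}{\cM}$ is tested vertex-wise (\ref{inherited-structures}), $\f_i$ is exact provided each such copower functor is exact; and a copower functor is always right exact (being a left adjoint of a diagonal), while AB4 guarantees that it preserves monomorphisms, hence kernels — so it is exact. With both hypotheses of \lemref{Ext1-general} in force, the lemma gives $\Ext{\Rep{Q}{\cM}}{n}{\f_i(M)}{X}\cong\Ext{\cM}{n}{M}{\e_i(X)}$, as claimed.

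\emph{Part (b).} One option is to repeat the argument with the adjunction $\e_i\dashv\g_i$ of \thmref{adjoints-of-e}(b), applying \lemref{Ext1-general} with $\cA=\Rep{Q}{\cM}$, $\cB=\cM$, $F=\e_i$, $G=\g_i$, $A=X$, $B=M$, and using that $\g_i$ is exact under AB4$^*$ (its evaluation at $j$ is the $|Q(j,i)|$-fold power functor, which is exact by AB4$^*$, dually to the above). More economically, part (b) follows from part (a) applied to the opposite quiver $Q^\mathrm{op}$ and the opposite abelian category $\cM^\mathrm{op}$: the latter satisfies AB4 because $\cM$ satisfies AB4$^*$, one has $\Rep{Q^\mathrm{op}}{\cM^\mathrm{op}}=\Rep{Q}{\cM}^\mathrm{op}$ by \ref{op}, and $\g_i^{Q,\cM}$ corresponds under this identification to $\f_i^{Q^\mathrm{op},\cM^\mathrm{op}}$ by \ref{g}; translating the isomorphism of part (a) back through the identification $\Ext{\cC^\mathrm{op}}{n}{-}{-}=\Ext{\cC}{n}{-}{-}$ (with the two arguments interchanged) yields the stated isomorphism.

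\emph{Anticipated obstacle.} Essentially everything is formal once \lemref{Ext1-general} and \thmref{adjoints-of-e} are available; the one point requiring care is the exactness of $\f_i$ (resp.\ $\g_i$), i.e.\ the use of AB4 (resp.\ AB4$^*$) to upgrade right-exactness of a possibly infinite coproduct (resp.\ left-exactness of an infinite product) to full exactness at each vertex. If $Q$ is locally path-finite these copowers are finite and the AB4 hypothesis is not needed at this step.
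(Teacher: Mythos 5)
Your proof is correct and follows essentially the same route as the paper: invoke \lemref{Ext1-general} for the adjunction $\f_i\dashv\e_i$, observe that $\e_i$ is exact and that AB4 upgrades the right-exact left adjoint $\f_i$ to an exact functor (you spell out the vertex-wise copower argument that the paper leaves implicit), and obtain (b) by duality via $Q^\mathrm{op}$ and $\cM^\mathrm{op}$.
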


\begin{proof}
  \proofoftag{a} As $\cM$ satisfies AB3, the left adjoint $\f_i$ of $\e_i$ exists by \thmref{adjoints-of-e}. The functor $\f_i$ is certainly right exact, as it is a left adjoint, but it is even exact: this follows directly from the construction \ref{f} of $\f_i$ and the assumption AB4 that any coproduct of monomorphisms is a monomorphism. The asserted isomorphism now follows from \lemref{Ext1-general}.
  
  \proofoftag{b} Dual to the proof of (a). Alternatively, apply part (a) directly to the opposite quiver $Q^\mathrm{op}$ and the opposite category $\cM^\mathrm{op}$.
\end{proof}

\begin{rmk}
  For the conclusion in \prpref{Ext-e}(a) to hold, it is not always necessary to require that $\cM$ satisfies AB4. For example, if $Q$ is a locally path finite quiver, then the functor $\f_i$ exists and it is exact for \textsl{any} abelian category $\cM$, cf.~\rmkref{fewer}.
\end{rmk}

The next result concerns the stalk functor $\s_i$ and its adjoints $\c_i$ and $\k_i$ (\secref{adj-s}).

\begin{prp}
  \label{prp:Ext-s}
  Let $\cM$ be any abelian category and let $i$ be any vertex in a quiver $Q$. 
  \begin{prt}
  
  \item Assume that $\cM$ satisfies \textnormal{AB3}. Let $X \in \Rep{Q}{\cM}$ be a representation for which~$\varphi^X_i$ is a monomorphism and let $M \in \cM$ be any object. Then there is an isomorphism,
\begin{displaymath}
  \Ext{\Rep{Q}{\cM}}{1}{X}{\s_i(M)} \cong 
  \Ext{\cM}{1}{\c_i(X)}{M}\,.
\end{displaymath}  

  \item Assume that $\cM$ satisfies \textnormal{AB3$^*$}. Let $X \in \Rep{Q}{\cM}$ be a representation for which~$\psi^X_i$ is an epimorphism and let $M \in \cM$ be any object. Then there is an isomorphism,
\begin{displaymath}
  \Ext{\Rep{Q}{\cM}}{1}{\s_i(M)}{X} \cong 
  \Ext{\cM}{1}{M}{\k_i(X)}\,.
\end{displaymath}  

    \end{prt}

\end{prp}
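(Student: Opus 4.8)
The plan is to derive both isomorphisms from \lemref{Ext1-general}, applied with $n=1$. For part~(a) I would invoke that lemma with $\cA=\Rep{Q}{\cM}$, $\cB=\cM$, $F=\c_i$, $G=\s_i$, $A=X$ and $B=M$; this is permissible because $\c_i$ is a left adjoint of $\s_i$ by \thmref{adjoints-of-s}(a) (which uses AB3), and the conclusion of the lemma is then exactly the asserted isomorphism $\Ext{\cM}{1}{\c_i(X)}{M}\cong\Ext{\Rep{Q}{\cM}}{1}{X}{\s_i(M)}$. So the task reduces to verifying hypotheses~(1) and~(2) of \lemref{Ext1-general} when $n=1$.

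Hypothesis~(2) is immediate: the stalk functor $\s_i$ is exact by \ref{inherited-structures}, so it carries every exact sequence in $\cM$ to an exact sequence in $\Rep{Q}{\cM}$. Hypothesis~(1) is the substantive point, and it is here that the assumption that $\varphi^X_i$ is a monomorphism is consumed. Given an arbitrary short exact sequence $0\to\s_i(M)\to D\to X\to 0$ in $\Rep{Q}{\cM}$, I must show that applying $\c_i$ yields a short exact sequence $0\to\c_i\s_i(M)\to\c_i(D)\to\c_i(X)\to 0$. Recalling from \ref{c} that $\c_i(Y)=\Coker{\varphi^Y_i}$ and that $Y\mapsto\varphi^Y_i$ is natural, I would put $P_i(Y)=\bigoplus_{a\in\arrt{i}}Y(\sou{a})$ and consider the commutative diagram in $\cM$ with exact rows whose top row is $P_i(\s_i(M))\to P_i(D)\to P_i(X)\to 0$, whose bottom row is the short exact sequence $0\to M\to D(i)\to X(i)\to 0$ obtained by applying the exact functor $\e_i$ (here $\e_i\s_i(M)=M$), and whose vertical maps are $\varphi^{\s_i(M)}_i$, $\varphi^D_i$, $\varphi^X_i$. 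The top row is exact because $P_i(-)$ is a coproduct of the exact evaluation functors $\e_{\sou{a}}$, and a coproduct of short exact sequences remains exact since coproducts preserve cokernels; crucially, this needs only AB3, not AB4. The snake lemma now produces an exact sequence $\Ker{\varphi^X_i}\to\c_i\s_i(M)\to\c_i(D)\to\c_i(X)\to 0$, and since $\varphi^X_i$ is a monomorphism one has $\Ker{\varphi^X_i}=0$; therefore $0\to\c_i\s_i(M)\to\c_i(D)\to\c_i(X)\to 0$ is exact, as required. (One may note in passing that $\c_i\s_i(M)\cong M$ because $\varphi^{\s_i(M)}_i=0$, but this is not needed.) This verification of~(1) is the only non-formal step and is, I expect, the main obstacle: one has to pick the right two-row diagram, check that its top row is exact invoking AB3 alone, and see precisely how the snake lemma converts the hypothesis that $\varphi^X_i$ is a monomorphism into the monomorphism $\c_i\s_i(M)\hookrightarrow\c_i(D)$.

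For part~(b) I would not repeat the argument but instead apply part~(a) to the opposite quiver $Q^{\mathrm{op}}$ and the opposite category $\cM^{\mathrm{op}}$, which satisfies AB3 because $\cM$ satisfies AB3$^*$; this uses \ref{op}. Under passage to the opposite, $\c_i$ turns into $(\k_i)^{\mathrm{op}}$ by \ref{k}, the stalk functor is self-dual in the evident sense, and, since $\arrs{i}$ for $Q$ coincides with $\arrt{i}$ for $Q^{\mathrm{op}}$, the morphism $\varphi^X_i$ computed in $\Rep{Q^{\mathrm{op}}}{\cM^{\mathrm{op}}}$ is $\psi^X_i$ read in $\cM^{\mathrm{op}}$; hence the requirement of part~(a) that this morphism be a monomorphism in $\cM^{\mathrm{op}}$ is precisely the hypothesis of part~(b) that $\psi^X_i$ be an epimorphism in $\cM$. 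Since extension groups in an opposite category are obtained by interchanging the two arguments, part~(a) then delivers the desired isomorphism $\Ext{\Rep{Q}{\cM}}{1}{\s_i(M)}{X}\cong\Ext{\cM}{1}{M}{\k_i(X)}$. Alternatively, the snake-lemma argument of part~(a) can be carried out verbatim with products in place of coproducts, using that products are left exact in every abelian category.
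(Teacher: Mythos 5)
Your proof is correct and takes essentially the same route as the paper's: apply \lemref{Ext1-general} with $n=1$ to the adjunction $\c_i \dashv \s_i$ from \thmref{adjoints-of-s}, discharge hypothesis~(2) by exactness of $\s_i$, and verify hypothesis~(1) via the Snake Lemma applied to the two-row diagram with vertical maps $\varphi^{\s_i(M)}_i$, $\varphi^D_i$, $\varphi^X_i$, using that $\Ker{\varphi^X_i}=0$. Part~(b) is then obtained by duality, exactly as the paper does; your observation that the isomorphism $\c_i\s_i(M)\cong M$ is not actually needed for the lemma to apply is a small, accurate refinement.
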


\begin{proof}
  \proofoftag{a} We will apply \lemref{Ext1-general} with $n=1$ to the adjunction $\c_i \dashv \s_i$ from \thmref{adjoints-of-s}. The functor $\s_i$ is exact so it satisfies the hypothesis in \lemref{Ext1-general}(2). To see that $\c_i$ satisfies \lemref[]{Ext1-general}(1) we must argue that $\c_i$ maps every short exact sequence $0 \to \s_i(M) \to D \to X \to 0$
  in $\Rep{Q}{\cM}$ to a short exact sequence in $\cM$ (this is not true for any $X$, but we shall see that it is true in our case where \smash{$\varphi^X_i$} is assumed to be a monomorphism). Such a short exact sequence induces the following commutative diagram in $\cM$ with exact rows,
\begin{equation}
  \label{eq:SL}
  \begin{gathered}
  \xymatrix@C=1.3pc{
    {}
    & 
    \bigoplus_{a \in \arrt{i}} \s_i(M)(\sou{a}) \ar[d]^-{\varphi^{ \s_i(M)}_i} \ar[r] 
    & 
    \bigoplus_{a \in \arrt{i}} D(\sou{a}) \ar[d]^-{\varphi^D_i} \ar[r]
    & 
    \bigoplus_{a \in \arrt{i}} X(\sou{a}) \ar[d]^-{\varphi^X_i} \ar[r] 
    & 0
    \\
    0 \ar[r] & \s_i(M)(i) \ar[r] & D(i) \ar[r] & X(i) \ar[r] & 0.
  }
  \end{gathered}
\end{equation}    
(We are not guaranteed that a coproduct of monomorphisms in $\cM$ is a monomorphism, as we have not assumed that $\cM$ satisfies AB4. Thus, the left-most morphism in the top row of \eqref{SL} is not necessarily monic.) By assumption, \smash{$\Ker{\varphi^X_i}=0$}, so the exact kernel-cokernel sequence that arises from applying the Snake Lemma to \eqref{SL} shows that the sequence
\begin{displaymath}
  0 \longrightarrow \Coker{\varphi^{\s_i(M)}_i} \longrightarrow \Coker{\varphi^D_i} \longrightarrow \Coker{\varphi^X_i} \longrightarrow 0
\end{displaymath}
is exact. 
%(Note that \smash{$\Coker{\varphi^{\s_i(M)}_i}\!=M$} since \smash{$\varphi^{\s_i(M)}_i$} is the zero morphism and  $\s_i(M)(i)=M$.) 
By definition, this sequence is nothing but $0 \to \c_i\s_i(M) \to \c_i(D) \to \c_i(X) \to 0$, and since $\c_i\s_i(M) \cong M$ this completes the proof.

  \proofoftag{b} Dual to the proof of (a). Alternatively, apply part (a) directly to the opposite quiver $Q^\mathrm{op}$ and the opposite category $\cM^\mathrm{op}$.
\end{proof}

\enlargethispage{5ex}

\begin{ipg}
  \label{cone}
  Fix objects $X \in \Rep{Q}{\cM}$ and $M \in \cM$ and fix a vertex $i \in Q_0$. Given any family $\Xi = \{\xi_a\}_{a \in \arrt{i}}$ of morphisms $\xi_a \colon X(\sou{a}) \to M$ in $\cM$ we construct a representation
  \begin{displaymath}
    C = C(X,M,i,\Xi) \in \Rep{Q}{\cM}
  \end{displaymath}
  as follows. For a vertex $j \in Q_0$ we set
  \begin{displaymath}
    C(j) = X(j) \ \ \text{ for } \ \ j \neq i
    \qquad \text{ and } \qquad
    C(j) = C(i) = 
    {\def\arraystretch{0.9}
    \begin{matrix}
      X(i) \\ \oplus \\ M
    \end{matrix}
    }
    \ \ \text{ for } \ \ j=i\;.
  \end{displaymath}  
  The morphism $C(a) \colon C(j) \to C(k)$  associated to an arrow
  \mbox{$a \colon\! j \to k$} in $Q$ is, depending on four different cases, defined as shown in the following table: \vspace*{1ex}
  \begin{displaymath}
    \begin{array}{|c|c|}
      \hline
      \textnormal{($1^\circ$) If $j \neq i$ and $k \neq i$\,:} & 
      \textnormal{($3^\circ$) If $j \neq i$ and $k=i$\,:}
      \\
      \xymatrix@C=3pc{X(j) \ar[r]^-{X(a)} & X(k)} & 
      \xymatrix@C=3pc{X(j) \ar[r]^-{\left(\begin{smallmatrix} X(a) \\ \xi_a \end{smallmatrix}\right)} & {\def\arraystretch{0.9}\begin{matrix} X(i) \\ \oplus \\ M \end{matrix}}} 
      \\ \hline 
      \textnormal{($2^\circ$) If $j=i$ and $k \neq i$\,:} &
      \textnormal{($4^\circ$) If $j=i$ and $k=i$\,:}
      \\
      \xymatrix@C=3pc{{\def\arraystretch{0.9}\begin{matrix} X(i) \\ \oplus \\ M \end{matrix}} \ar[r]^-{\left(\begin{smallmatrix} X(a) & 0 \end{smallmatrix}\right)} & X(k)} & 
      \xymatrix@C=3pc{{\def\arraystretch{0.9}\begin{matrix} X(i) \\ \oplus \\ M \end{matrix}} \ar[r]^-{\left(\begin{smallmatrix} X(a) & 0 \\ \xi_a & 0 \end{smallmatrix}\right)} & {\def\arraystretch{0.9}\begin{matrix} X(i) \\ \oplus \\ M \end{matrix}}}  
      \\ \hline    
    \end{array}
   \vspace*{1ex}    
  \end{displaymath}
  The hereby constructed representation $C$ fits into a short exact sequence in $\Rep{Q}{\cM}$,
\begin{equation}
  \label{eq:ses}
  \xymatrix@C=1.5pc{
    0 \ar[r] & \s_i(M) \ar[r]^-{\iota} & C \ar[r]^-{\pi} & X \ar[r] & 0
  }, 
\end{equation}  
where $\iota(j)$ and $\pi(j)$ are defined as follows:
\begin{displaymath}
  \begin{array}{ccc}
  \textnormal{For $j\neq i$\,:} & & \textnormal{For $j=i$\,:}
  \\
  {
  \xymatrix@R=2.0pc@C=2.5pc{
    \s_i(M)(j) \ar@{=}[d] \ar[r]^-{\iota(j)} & C(j) \ar@{=}[d] \ar[r]^-{\pi(j)} & X(j) \ar@{=}[d]
    \\
    0 \ar[r]^-{0} & X(j) \ar[r]^-{1_{X(j)}} & X(j)
  }
  } 
  & \quad \quad &
  {
  \xymatrix@R=0.9pc@C=2.5pc{
    \s_i(M)(i) \ar@{=}[d] \ar[r]^-{\iota(i)} & C(i) \ar@{=}[d] \ar[r]^-{\pi(i)} & X(i)\phantom{\;.} \ar@{=}[d]
    \\
    M \ar[r]^-{\left(\begin{smallmatrix} 0 \\ 1_M \end{smallmatrix}\right)} & {\def\arraystretch{0.9}\begin{matrix} X(i) \\ \oplus \\ M \end{matrix}} \ar[r]^-{(\begin{smallmatrix} 1_{X(i)} & 0 \end{smallmatrix})} & X(i)\;.
  }
  } 
  \end{array}  
\end{displaymath}
To see that $\iota$ and $\pi$ are in fact morphisms of representations, i.e.~that the diagram
\begin{displaymath}
  \xymatrix{
    \s_i(M)(j) \ar[d]_-{\s_i(M)(a)=0} \ar[r]^-{\iota(j)} & C(j) \ar[d]^-{C(a)} \ar[r]^-{\pi(j)} & X(j) \ar[d]^-{X(a)}
    \\
    \s_i(M)(k) \ar[r]^-{\iota(k)} & C(k) \ar[r]^-{\pi(k)} & X(k)
  }
\end{displaymath} 
is commutative for every arrow \mbox{$a \colon\! j \to k$} in $Q$, one simply checks all four cases ($1^\circ$)--($4^\circ$) in the table above.
\end{ipg}

%\enlargethispage{10ex}

\begin{prp}
  \label{prp:varphi-is-mono}
  Let $\cM$ be any abelian category and let $i$ be any vertex in a quiver $Q$. For any objects $X \in \Rep{Q}{\cM}$ and $M \in \cM$ the following conclusions hold. 
   \begin{prt}
   
   \item Assume that $\cM$ satisfies \textnormal{AB3}. If one has $\Ext{\smash{\Rep{Q}{\cM}}}{1}{X}{\s_i(M)}=0$, then the homomorphism $\Hom{\cM}{\varphi^X_i}{M}$ is surjective. 
   
   Thus, if $\cM$ has enough injectives and $\Ext{\smash{\Rep{Q}{\cM}}}{1}{X}{\s_i(I)}=0$ for each injective $I \in \cM$, then $\varphi^X_i$ is a monomorphism.
   
   \item Assume that $\cM$ satisfies \textnormal{AB3$^*$}. If one has  \smash{$\Ext{\Rep{Q}{\cM}}{1}{\s_i(M)}{X}=0$}, then the ho\-momorphism $\Hom{\cM}{M}{\psi^X_i}$ is surjective. 
   
   Thus, if $\cM$ has enough projectives and \smash{$\Ext{\Rep{Q}{\cM}}{1}{\s_i(P)}{X}=0$} for each projective $P \in \cM$, then $\psi^X_i$ is an epimorphism.

   \end{prt}   
\end{prp}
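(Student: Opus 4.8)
The plan is to read off the surjectivity in (a) directly from the construction in \ref{cone}. Recall that $\Hom{\cM}{\varphi^X_i}{M}$ is the map $\beta\mapsto\beta\circ\varphi^X_i$ from $\Hom{\cM}{X(i)}{M}$ to $\Hom{\cM}{\bigoplus_{a\in\arrt i}X(\sou a)}{M}$, and that a morphism in the target corresponds, by the universal property of the coproduct, to a family $\Xi=\{\xi_a\colon X(\sou a)\to M\}_{a\in\arrt i}$. Since $\varphi^X_i\circ\varepsilon_a=X(a)$ by \ref{c}, what must be shown is: for every such $\Xi$ there is a morphism $\gamma\colon X(i)\to M$ with $\gamma\circ X(a)=\xi_a$ for all $a\in\arrt i$, because then $\gamma\circ\varphi^X_i$ is exactly the morphism determined by $\Xi$.

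First I would apply \ref{cone} to this family $\Xi$, obtaining the representation $C=C(X,M,i,\Xi)$ together with the short exact sequence \eqref{ses}, that is, $0\to\s_i(M)\xrightarrow{\iota}C\xrightarrow{\pi}X\to0$ in $\Rep{Q}{\cM}$. Since $\Ext{\Rep{Q}{\cM}}{1}{X}{\s_i(M)}=0$, this sequence splits, so I fix a section $\sigma\colon X\to C$ with $\pi\circ\sigma=1_X$. At a vertex $j\neq i$ one has $C(j)=X(j)$ and $\pi(j)=1_{X(j)}$, forcing $\sigma(j)=1_{X(j)}$; at the vertex $i$ one has $C(i)=X(i)\oplus M$ and $\pi(i)=\left(\begin{smallmatrix}1_{X(i)}&0\end{smallmatrix}\right)$, so $\sigma(i)=\left(\begin{smallmatrix}1_{X(i)}\\\gamma\end{smallmatrix}\right)$ for a unique morphism $\gamma\colon X(i)\to M$.

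Next I would use that $\sigma$ is a morphism of representations. For $a\in\arrt i$ the naturality square of $\sigma$ along $a$ reads $\sigma(i)\circ X(a)=C(a)\circ\sigma(\sou a)$. Reading $C(a)$ off the table in \ref{cone} — case $(3^\circ)$ when $\sou a\neq i$, where $C(a)=\left(\begin{smallmatrix}X(a)\\\xi_a\end{smallmatrix}\right)$ and $\sigma(\sou a)=1$, and case $(4^\circ)$ when $\sou a=i$ (a loop at $i$), where $C(a)=\left(\begin{smallmatrix}X(a)&0\\\xi_a&0\end{smallmatrix}\right)$ and $\sigma(\sou a)=\sigma(i)$ — a one-line matrix computation shows that in both cases the equation reduces to $\gamma\circ X(a)=\xi_a$. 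Hence $\gamma$ is the desired morphism and $\Hom{\cM}{\varphi^X_i}{M}$ is surjective. For the final sentence of (a): assuming $\cM$ has enough injectives and the stated vanishing for all injectives $I$, the first part gives that $\Hom{\cM}{\varphi^X_i}{I}$ is surjective for every injective $I$; then, writing $K=\Ker\varphi^X_i$ with inclusion $\kappa$, embedding $K$ into an injective $I$ by a monomorphism $\mu$, extending $\mu$ along the monomorphism $\kappa$ to some $\nu\colon\bigoplus_{a\in\arrt i}X(\sou a)\to I$ (injectivity of $I$), and writing $\nu=\theta\circ\varphi^X_i$ by surjectivity, one gets $\mu=\nu\circ\kappa=\theta\circ\varphi^X_i\circ\kappa=0$, whence $K=0$ and $\varphi^X_i$ is a monomorphism.

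Finally, part (b) I would deduce from (a) by applying it to the opposite quiver $Q^{\mathrm{op}}$ and the opposite category $\cM^{\mathrm{op}}$: under the identification $\Rep{Q^{\mathrm{op}}}{\cM^{\mathrm{op}}}=\Rep{Q}{\cM}^{\mathrm{op}}$ of \ref{op}, the morphism $\varphi$ over $(Q^{\mathrm{op}},\cM^{\mathrm{op}})$ becomes $\psi^X_i$ over $(Q,\cM)$, the functor $\c_i$ over $(Q^{\mathrm{op}},\cM^{\mathrm{op}})$ becomes $\k_i$ over $(Q,\cM)$ by \ref{k}, AB3$^*$ for $\cM$ is AB3 for $\cM^{\mathrm{op}}$, an epimorphism in $\cM$ is a monomorphism in $\cM^{\mathrm{op}}$, and enough projectives in $\cM$ is enough injectives in $\cM^{\mathrm{op}}$. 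The only step that needs care is the bookkeeping in the matrix identities $\sigma(i)\circ X(a)=C(a)\circ\sigma(\sou a)$ — in particular confirming that the loop case $(4^\circ)$ yields the same relation $\gamma\circ X(a)=\xi_a$ as the generic case $(3^\circ)$ — but no genuine obstacle is expected, since the representation $C$ in \ref{cone} was built precisely so that a splitting of \eqref{ses} is the same datum as a factorization of $\Xi$ through $\varphi^X_i$.
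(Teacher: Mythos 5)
Your proposal is correct and follows essentially the same route as the paper: apply Construction~\ref{cone} to the family $\Xi$ determined by the given morphism, split the resulting short exact sequence \eqref{ses} using $\Ext^1=0$, read off the needed morphism from the $M$-coordinate of $\sigma(i)$, and use that $\sigma$ is a morphism of representations (the naturality squares for $a\in\arrt{i}$) to obtain $\gamma\circ X(a)=\xi_a$ and hence $\gamma\circ\varphi^X_i=\alpha$. The only material difference is that you spell out the routine deduction of the ``Thus'' sentence (embed $\Ker\varphi^X_i$ into an injective, extend along the kernel inclusion, factor through $\varphi^X_i$), which the paper leaves implicit, and for (b) you use the same duality argument via $Q^{\mathrm{op}}$ and $\cM^{\mathrm{op}}$ that the paper invokes.
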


\begin{proof} 
  \proofoftag{a} We must show that for every morphism $\alpha$ there exists a morphism $\beta$ that makes the following diagram in $\cM$ commutative:
\begin{equation}
  \label{eq:ab}
  \begin{gathered}
  \xymatrix{
    \bigoplus_{a \in \arrt{i}} X(\sou{a}) \ar[d]_-{\alpha} \ar[r]^-{\varphi^X_i} & X(i) \ar@{..>}[dl]^-{\beta}
    \\
    \,M\,. & {}
  }
  \end{gathered}  
\end{equation}  
We write \smash{$\varepsilon_a \colon X(\sou{a}) \to \bigoplus_{a \in \arrt{i}} X(\sou{a})$} for the canonical injections and apply \ref{cone} to the morphisms $\xi_a= \alpha\varepsilon_a$ to obtain the short exact sequence \eqref{ses}. As \smash{$\Ext{\Rep{Q}{\cM}}{1}{X}{\s_i(M)}=0$} this sequence splits, and hence there is a morphism $\sigma \colon X \to C$ in $\Rep{Q}{\cM}$ which is a right-inverse of $\pi$. Recall that for $j \in Q_0$ with $j \neq i$ we have $\pi(j)=1_{X(j)}$ and, consequently, $\sigma(j)=1_{X(j)}$ as well. The morphism $\sigma(i)$ has two coordinate maps, say,
\begin{displaymath}
\xymatrix@C=4pc{X(i) \ar[r]^-{\sigma(i) \,=\, \left(\begin{smallmatrix} \gamma \\ \beta \end{smallmatrix}\right)} & C(i) = {\def\arraystretch{0.9} \begin{matrix} X(i) \\ \oplus \\ M \end{matrix}}. } 
\end{displaymath}
As $\sigma(i)$ is a right-inverse of $\pi(i)$, it follows that
\begin{displaymath}
  1_{X(i)} = \pi(i)\sigma(i) = \begin{pmatrix} 1_{X(i)} & \!\!\!\!0 \end{pmatrix}\!\!
\begin{pmatrix} \gamma \\ \beta \end{pmatrix} = \gamma\;.
\end{displaymath}
Since $\sigma \colon X \to C$ is a morphism of representations, we have for every arrow $a \in \arrt{i}$, say, \mbox{$a \colon\! j \to i$}, a commutative diagram:

%%%%%%%%%%%%%%%%%%%%%%%%%%%%%%

\begin{displaymath}
  \begin{array}{ccc}
  \hspace*{-10ex}\textnormal{For $j\neq i$, see \ref{cone}($3^\circ$):}  & & 
  \hspace*{-10ex}\textnormal{For $j= i$, see \ref{cone}($4^\circ$):}
  \vspace*{0.5ex} \\
  {
  \xymatrix@C=3.3pc{
    X(j) \ar[d]_-{X(a)} \ar[r]^-{\sigma(j)\,=\,1_{X(j)}} & C(j) = X(j)
    \ar@<3ex>[d]^-{C(a) \,=\, \left(\begin{smallmatrix} X(a) \\ \alpha\varepsilon_a \end{smallmatrix}\right)}
    \\
    X(i) 
    \ar[r]^-{\sigma(i) \,=\, \left(\!\!\begin{smallmatrix} 1_{X(i)} \\ \beta \end{smallmatrix}\!\!\right)} 
    & 
    C(i)= {\def\arraystretch{0.9}\begin{matrix} X(i) \\ \oplus \\ M \end{matrix}}
  }
  }
  & & 
  {
  \xymatrix@R=1.05pc@C=3.3pc{
    X(i) \ar[d]_-{X(a)} 
    \ar[r]^-{\sigma(i) \,=\, \left(\!\!\begin{smallmatrix} 1_{X(i)} \\ \beta \end{smallmatrix}\!\!\right)} 
    & C(i) = {\def\arraystretch{0.9}\begin{matrix} X(i) \\ \oplus \\ M \end{matrix}}
    \ar@<3ex>[d]^-{C(a) \,=\, \left(\begin{smallmatrix} X(a) & 0 \\ \alpha\varepsilon_a & 0 \end{smallmatrix}\right)}
    \\
    X(i) 
    \ar[r]^-{\sigma(i) \,=\, \left(\!\!\begin{smallmatrix} 1_{X(i)} \\ \beta \end{smallmatrix}\!\!\right)} 
    & 
    C(i)= {\def\arraystretch{0.9}\begin{matrix} X(i) \\ \oplus \\ M \end{matrix}}
  }
  }   
  \end{array}
\end{displaymath}  
In either case, it follows that $\beta X(a) = \alpha \varepsilon_a$. By the definition \ref{c} of $\varphi^X_i$ we have $X(a) = \varphi^X_i\varepsilon_a$, and hence $\beta \varphi^X_i\varepsilon_a = \alpha \varepsilon_a$ for all $a \in \arrt{i}$. By the universal property of the coproduct, it follows that $\beta \varphi^X_i = \alpha$, so \eqref{ab} is commutative as desired.

  \proofoftag{a} Dual to the proof of (a). Alternatively, apply part (a) directly to the opposite quiver $Q^\mathrm{op}$ and the opposite category $\cM^\mathrm{op}$.
\end{proof}

\enlargethispage{3ex}

\section{Cotorsion pairs}
\label{sec:cotorsion}

We collect some results about cotorsion pairs in abelian categories that we will need. In this section, $\cM$ is any abelian category.

For objects $M,N \in \cM$ and an integer $n \geqslant 0$ we denote by $\Ext{\cM}{n}{M}{N}$ the $n^\mathrm{th}$ Yoneda Ext group, whose elements are equivalence classes of $n$-exten\-sions of $N$ by $M$. It is well-known that if  $\cM$ has enough projectives, respectively, enough injectives, then $\Ext{\cM}{n}{M}{N}$ can be computed by using a projective resolution of $M$, respectively, an injective resolution of $N$, see e.g.~Hilton--Stammbach \cite[Chap.~IV\S9]{HiltonStammbach}.

For a class $\cC$ of objects in $\cM$ and $n \geqslant 1$, we set
\begin{align*}
  \cC^{\perp_n} &= \{\,N \in \cM \,|\, \Ext{\cM}{n}{C}{N}=0 \,\text{ for all }\, C \in \cC\,\} \quad \text{and}
  \\
  {}^{\perp_n}\cC &= \{\,M \in \cM \,|\, \Ext{\cM}{n}{M}{C}=0 \,\text{ for all }\, C \in \cC\,\}\;.  
\end{align*}
We set $\cC^\perp=\cC^{\perp_1}$ and $\cC^{\perp_\infty}=\bigcap_{\mspace{1mu}n=1}^\infty\cC^{\perp_n}$ and similarly  ${}^\perp\cC={}^{\perp_1}\cC$ and ${}^{\perp_\infty}\cC=\bigcap_{\mspace{1mu}n=1}^\infty{}^{\perp_n}\cC$.

A \emph{cotorsion pair} in $\cM$ is a pair $(\cA,\cB)$ of classes of objects in $\cM$ for which equalities $\cA^\perp = \cB$ and $\cA = {}^\perp\cB$ hold.

For a class $\cC$ of objects in $\cM$, the cotorsion pair \emph{generated} by $\cC$ is $\mathfrak{G}_{\cC} = ({}^\perp(\cC^\perp),\cC^\perp)$ and the cotorsion pair \emph{cogenerated} by $\cC$ is $\mathfrak{C}_{\cC} = ({}^\perp\cC,({}^\perp\cC)^\perp)$. We use here the terminology of G{\"o}bel and Trlifaj \cite[Def.~2.2.1]{GobelTrlifaj}. Beware that some authors---e.g.~Enochs and Jenda \cite[Def.~7.1.2]{rha} and {\v{S}}aroch and Trlifaj \cite[Introduction]{SarochTrlifaj}---use the term ``generated'' (respectively, ``cogenerated'') for what we have called ``cogenerated'' (respectively, ``generated'').

The following terminology is standard; see for example \cite[Def.~2.2.8]{GobelTrlifaj}.

\begin{ipg}
  \label{resolving}
  Let $\cC$ be a class of objects in $\cM$. If $\cM$ has enough projectives (respectively, enough injectives), then $\cC$ is called \emph{resolving} (respectively, \emph{coresolving}) if it contains all projective (respectively, all injective) objects in $\cM$ and is closed under extensions and kernels of epimorphisms (respectively, extensions and cokernels of monomorphisms).
\end{ipg}

\begin{ipg}
  \label{hereditary}
  A cotorsion pair $(\cA,\cB)$ in $\cM$ is called \emph{hereditary} if $\Ext{\cM}{n}{A}{B}=0$ for all $A \in \cA$, $B \in \cB$, and all $n \geqslant 1$. That is, $(\cA,\cB)$ is hereditary if $\cA^{\perp_\infty} \supseteq \cB$, equivalently, if $\cA \subseteq {}^{\perp_\infty}\cB$, and in the affirmative case one has $\cA^{\perp_\infty} = \cB$ and $\cA = {}^{\perp_\infty}\cB$.

  A result by Garc{\'{\i}}a Rozas' \cite[Thm.~1.2.10]{GR99} (see also \cite[Lem.~2.2.10]{GobelTrlifaj}) asserts that for~a cotorsion pair $(\cA,\cB)$ in the category  \mbox{$\cM=\mathrm{Mod}\mspace{3mu}R$} of (left) modules over a ring $R$, the fol\-lowing conditions are equivalent:
  \begin{eqc}
  \item $(\cA,\cB)$ is hereditary.
  \item $\cA$ is resolving (see \ref{resolving}).
  \item $\cB$ is coresolving (see \ref{resolving}).  
  \end{eqc}      
  An inspection of the proof of this result reveals that ($i$)$\,\Leftrightarrow\,$($ii$) holds in any abelian category $\cM$ with enough projectives and, similarly, ($i$)$\,\Leftrightarrow\,$($iii$) holds if $\cM$ has enough injectives.
\end{ipg}

\begin{ipg}
  \label{complete}
  A cotorsion pair $(\cA,\cB)$ in $\cM$ is \emph{complete} if it satisfies the following two conditions:
  \begin{eqc}
  \item The cotorsion pair $(\cA,\cB)$ \emph{has enough projectives}, that is, for every $M \in \cM$ there exists an exact sequence $0 \to B \to A \to M \to 0$ with $A \in \cA$ and $B \in \cB$.
  \item The cotorsion pair $(\cA,\cB)$ \emph{has enough injectives}, that is, for every $M \in \cM$ there exists an exact sequence $0 \to M \to B \to A \to 0$ with $A \in \cA$ and $B \in \cB$.  
  \end{eqc}  
  Salce's Lemma (which goes back to \cite{Salce}) asserts that ($i$) and ($ii$) are equivalent in the~case where \mbox{$\cM=\mathrm{Ab}$} is the category of abelian groups. The proof of this lemma, see for example \cite[Prop.~7.1.7]{rha} or \cite[Lem.~2.2.6]{GobelTrlifaj}, shows that if the abelian category $\cM$ has enough injectives, then ($i$)$\,\Rightarrow\,$($ii$); and if $\cM$ has enough projectives, then ($ii$)$\,\Rightarrow\,$($i$).
  
  Let $\cM$ be a Grothendieck category.
If $(\cA,\cB)$ is a cotorsion pair in $\cM$ generated by a set (as opposed to a proper class), then \cite[Prop.~5.8]{Stovicek2013} (or \cite[Thm.~10]{PCEJTr01} in the special case where \mbox{$\cM = \mathrm{Mod}\mspace{3mu}R$}) implies that $(\cA,\cB)$ satisfies condition ($ii$) above. As already noted, ($i$) follows from ($ii$) if $\cM$ has enough projectives, so we get:

\emph{If $\cM$ is a Grothendieck category with enough projectives, then every cotorsion pair in $\cM$ which is generated by a set is complete.}\footnote{\,Actually, one does not need to assume that the Grothendieck category $\cM$ has enough~projectives, indeed, by \cite[Thm.~5.16]{Stovicek2013} it is enough that the left half of the cotorsion pair contains a generator~of~$\cM$.}

Under certain assumptions, including G{\"o}del's Axiom of Constructibility (V$\mspace{2mu}=\mspace{2mu}$L), cotorsion pairs in \mbox{$\mathrm{Mod}\mspace{3mu}R$} that are cogenerated by a set will also be complete; see {\v{S}}aroch and Trlifaj \cite[Thms.~1.3 and 1.7]{SarochTrlifaj}.
\end{ipg}

\begin{ipg}
  \label{direct-lambda-sequence}
   Let $\lambda$ be an ordinal. A $\lambda$-direct system $\{f_{\beta\alpha} \colon M_\alpha \to M_\beta\}_{\alpha \leqslant \beta \leqslant \lambda}$ in $\cM$, that is, a well-ordered direct system in $\cM$ indexed by $\lambda$, can be (partially) illustrated as follows:
  \begin{equation}
    \label{eq:direct-lambda-sequence}
    \begin{gathered}
    \xymatrix{
      M_0 \ar@/_1.3pc/[rr]^-{f_{20}} \ar@/_1.8pc/[rrr]^(0.6){f_{30}} \ar@/_2.8pc/[rrrrr]^-{f_{\omega,0}} \ar@/_3.7pc/[rrrrrr]^(0.6){f_{\omega+1,0}} \ar[r]^-{f_{10}} & M_1 \ar[r]^-{f_{21}} & M_2 \ar[r]^-{f_{32}} & M_3 \ar[r]^-{f_{43}} &\cdots 
      \ar[r] & M_\omega \ar[r]^-{f_{\omega+1,\omega}} & M_{\omega+1} \ar[r]^-{f_{\omega+2,\omega+1}} & \cdots
    }. 
    \end{gathered}    
   \vspace*{1ex}    
  \end{equation}
  Such a system is called a \emph{direct $\lambda$-sequence} if for each limit ordinal $\mu\leqslant\lambda$, the object $M_\mu$ together with the morphisms $f_{\mu\alpha} \colon M_\alpha \to M_\mu$ for $\alpha<\mu$, is a colimit of the 
direct subsystem $\{f_{\beta\alpha} \colon M_\alpha \to M_\beta\}_{\alpha \leqslant \beta <\mu}$. In symbols: \smash{$M_\mu = \varinjlim_{\alpha<\mu}M_\alpha$}.
  
  A \emph{continuous direct $\lambda$-sequence} is a direct $\lambda$-sequence \eqref{direct-lambda-sequence} for which all the morphisms $f_{\beta\alpha} \colon M_\alpha \to M_\beta$ ($\alpha \leqslant \beta \leqslant \lambda$) are monic. 

  A \emph{$\cC$-filtration} of an object $M \in \cM$ is a continuous direct $\lambda$-sequence \eqref{direct-lambda-sequence} with $M_0=0$ and $M_\lambda=M$ and such that $\Coker{f_{\alpha+1,\alpha}} \in \cC$ for all $\alpha<\lambda$.
\end{ipg}

\begin{rmk}
\label{rmk:Stovicek}
In the paper \cite{Stovicek2013} by {\v{S}}{\v{t}}ov{\'{\i}}{\v{c}}ek, cotorsion pairs are studied in the context of exact categories. We are only dealing with abelian categories\footnote{\,Every abelian category has a canonical structure as an exact category in which all short exact sequences are considered to be conflations (hence the inflations are exactly the monomorphisms and the deflations are exactly the epimorphisms).}, but even for such categories, our definition of a $\cC$-filtration is stronger than the one found in \cite[Def.~3.7]{Stovicek2013}. Indeed, in \emph{loc.~cit.}~it is only required that the morphisms $f_{\alpha+1,\alpha} \colon M_\alpha \to M_{\alpha+1}$ are inflations (in our case,  monomorphisms) with $\Coker{f_{\alpha+1,\alpha}} \in \cC$---not that \emph{all} the morphisms $f_{\beta\alpha} \colon M_\beta \to M_\alpha$ are inflations (= monomorphisms). However, several of the results about $\cC$-filtrations found in \cite{Stovicek2013} (for example, Lem.~3.10 and Prop.~5.7) require the exact category in which the result takes place to satisfy the axiom (Ef1), which means that arbitrary transfinite compositions, in the sense of \cite[Def.~3.2]{Stovicek2013}, of inflations (= monomorphisms) exist and are themselves inflations (= monomorphisms). In such a category, \emph{all} morphisms $f_{\beta\alpha} \colon M_\beta \to M_\alpha$ in a $\cC$-filtration in the sense of {\v{S}}{\v{t}}ov{\'{\i}}{\v{c}}ek \cite[Def.~3.7]{Stovicek2013} are actually inflations (= monomorphisms). In other words, in an abelian category satisfying (Ef1), there is no difference between our definition \ref{direct-lambda-sequence} of a $\cC$-filtration and the one found in {\v{S}}{\v{t}}ov{\'{\i}}{\v{c}}ek \cite[Def.~3.7]{Stovicek2013}.
\end{rmk}

In the case where \mbox{$\cM=\mathrm{Mod}\mspace{3mu}R$} is the category of (left) modules over a ring $R$, the next result, which is called ``Eklof's Lemma'', is indeed due to Eklof \cite[Thm.~1.2]{Eklof77} or \cite[Lem.~1]{PCEJTr01}. 

  If $\cM$ is an exact category satisfying (Ef1), then \lemref{Eklof} can be found in {\v{S}}{\v{t}}ov{\'{\i}}{\v{c}}ek~\cite[Prop.~5.7]{Stovicek2013} (see also Saor{\'{\i}}n and {\v{S}}{\v{t}}ov{\'{\i}}{\v{c}}ek \cite[Prop.~2.12]{SaorinStovicek}). In our version of Eklof's Lemma (\lemref[]{Eklof} below) we are working with any cocomplete abelian category $\cM$, and such a category does not necessarily satisfy (Ef1) (as $\cM$ is cocomplete, we do have that transfinite compositions of monomorphisms exist, but the resulting composition is not necessarily monic). However, as discussed above, we are also working with a stronger meaning of the notion ``filtration'' compared to {\v{S}}{\v{t}}ov{\'{\i}}{\v{c}}ek \cite{Stovicek2013}, and this makes up for the lack of (Ef1).

\begin{lem}[Eklof]
  \label{lem:Eklof}
  Let $\cM$ be a cocomplete abelian category. Let $\cC$ be a class of objects in $\cM$ and let $M$ be an object in $\cM$. If $M$ has a ${}^\perp\cC$-filtration, then $M$ belongs to ${}^\perp\cC$.
\end{lem}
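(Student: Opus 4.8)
The plan is to fix an object $C \in \cC$ and show that every short exact sequence $\mathcal{S}\colon 0 \to C \to E \xrightarrow{\pi} M \to 0$ splits; since $C$ is arbitrary this yields $\Ext{\cM}{1}{M}{C}=0$ for all $C\in\cC$, i.e.\ $M \in {}^\perp\cC$. Write the given ${}^\perp\cC$-filtration as a continuous direct $\lambda$-sequence $\{f_{\beta\alpha}\colon M_\alpha \to M_\beta\}_{\alpha\leqslant\beta\leqslant\lambda}$ with $M_0 = 0$, $M_\lambda = M$, and $K_\alpha := \Coker{f_{\alpha+1,\alpha}} \in {}^\perp\cC$ for every $\alpha < \lambda$. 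I will construct, by transfinite induction on $\alpha \leqslant \lambda$, morphisms $s_\alpha \colon M_\alpha \to E$ such that $\pi s_\alpha = f_{\lambda\alpha}$ and $s_\beta f_{\beta\alpha} = s_\alpha$ whenever $\alpha \leqslant \beta$ are among the ordinals already treated. Then $s_\lambda \colon M \to E$ satisfies $\pi s_\lambda = f_{\lambda\lambda} = 1_M$, so $\mathcal{S}$ splits.

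For $\alpha = 0$ take $s_0 = 0$ (recall $M_0 = 0$). For a limit ordinal $\mu \leqslant \lambda$, the morphisms $\{s_\alpha\}_{\alpha<\mu}$ form a cocone on the direct subsystem indexed by ordinals below $\mu$, and since $M_\mu = \varinjlim_{\alpha<\mu} M_\alpha$ there is a unique induced $s_\mu \colon M_\mu \to E$ with $s_\mu f_{\mu\alpha} = s_\alpha$ for all $\alpha<\mu$; precomposing both $\pi s_\mu$ and $f_{\lambda\mu}$ with each $f_{\mu\alpha}$ gives $f_{\lambda\alpha}$, so uniqueness in the colimit forces $\pi s_\mu = f_{\lambda\mu}$. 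Compatibility with the earlier $s_\gamma$ is immediate from the construction.

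The successor step, from $\alpha$ to $\alpha + 1$, is the heart of the argument. Form the pullbacks $E_\alpha := E \times_M M_\alpha$ and $E_{\alpha+1} := E \times_M M_{\alpha+1}$ along $f_{\lambda\alpha}$ and $f_{\lambda,\alpha+1}$; since pullbacks in an abelian category preserve epimorphisms and kernels, they fit into short exact sequences $0 \to C \to E_\alpha \to M_\alpha \to 0$ and $0 \to C \to E_{\alpha+1} \to M_{\alpha+1} \to 0$, and $E_\alpha$ is also the pullback of $E_{\alpha+1}$ along $f_{\alpha+1,\alpha}$. The pair $(s_\alpha, 1_{M_\alpha})$ induces a section of $E_\alpha \to M_\alpha$, so the class of $\mathcal{S}$ restricts to $0$ in $\Ext{\cM}{1}{M_\alpha}{C}$. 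Applying $\Hom{\cM}{-}{C}$ to the short exact sequence $0 \to M_\alpha \xrightarrow{f_{\alpha+1,\alpha}} M_{\alpha+1} \to K_\alpha \to 0$ and using $K_\alpha \in {}^\perp\cC$, hence $\Ext{\cM}{1}{K_\alpha}{C}=0$, shows that the restriction map $\Ext{\cM}{1}{M_{\alpha+1}}{C} \to \Ext{\cM}{1}{M_\alpha}{C}$ is injective; therefore $E_{\alpha+1}$ splits, say by $u \colon M_{\alpha+1} \to E_{\alpha+1}$. This $u$ need not agree on $M_\alpha$ with the lift $t_\alpha \colon M_\alpha \to E_{\alpha+1}$ of $s_\alpha$ supplied by the pullback: the difference $u f_{\alpha+1,\alpha} - t_\alpha$ is annihilated by the projection to $M_{\alpha+1}$, hence factors through $C$ as $\iota_C d$ for some $d \colon M_\alpha \to C$, where $\iota_C\colon C\to E_{\alpha+1}$ is the inclusion. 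Now the \emph{same} vanishing $\Ext{\cM}{1}{K_\alpha}{C}=0$ makes $\Hom{\cM}{M_{\alpha+1}}{C} \to \Hom{\cM}{M_\alpha}{C}$ surjective, so $d$ extends to $\tilde d \colon M_{\alpha+1} \to C$; replacing $u$ by $u - \iota_C\tilde d$ gives a section of $E_{\alpha+1} \to M_{\alpha+1}$ that restricts to $t_\alpha$ on $M_\alpha$. Composing with the pullback projection $E_{\alpha+1} \to E$ defines $s_{\alpha+1}$, and one checks $\pi s_{\alpha+1} = f_{\lambda,\alpha+1}$ and $s_{\alpha+1} f_{\alpha+1,\alpha} = s_\alpha$; compatibility with the earlier $s_\gamma$ for $\gamma \leqslant \alpha$ follows by factoring $f_{\alpha+1,\gamma}$ through $f_{\alpha+1,\alpha}$.

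The only genuine content is the successor step, and within it the twofold use of $\Ext{\cM}{1}{K_\alpha}{C}=0$: once as injectivity of a restriction map on $\Ext^1$, to produce \emph{some} section of the pulled-back extension, and once as surjectivity of a restriction map on $\Hom$, to adjust that section so that it is \emph{compatible} with the one built at the previous stage. Securing this compatibility is exactly what lets the limit steps go through, so it is the point requiring care; the base case, the limit case, and the bookkeeping with pullbacks are routine. I should also note that the six-term exact sequences of Yoneda $\Ext$ invoked above exist in an arbitrary abelian category, so no enough-projectives or enough-injectives hypothesis on $\cM$ is needed — only cocompleteness, which is what makes sense of the colimits appearing in the filtration.
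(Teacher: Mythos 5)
Your proof is correct and follows essentially the same route as the one the paper cites (Enochs--Jenda \cite[Thm.~7.3.4]{rha}): restrict the extension along the filtration via pullbacks (the abelian-category formulation of the preimages $g^{-1}(M_\alpha)$ mentioned in the paper's remark), and build a compatible system of sections by transfinite induction, using the $\Hom$--$\Ext^1$ six-term sequence for $0\to M_\alpha\to M_{\alpha+1}\to K_\alpha\to 0$ at successor steps. Your care in using the surjectivity of $\Hom{\cM}{M_{\alpha+1}}{C}\to\Hom{\cM}{M_\alpha}{C}$ to force agreement with the previously built section is exactly the point the paper is trusting the reader to supply, and your observation that Yoneda $\Ext^1$ needs no projectives or injectives is also what justifies working in a general cocomplete abelian category.
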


\begin{proof}
  We leave it to the reader to verify that the proof of \cite[Thm.~7.3.4]{rha} (which deals with the case \mbox{$\cM=\mathrm{Mod}\mspace{3mu}R$}) also works in the present more general setting. Here we just note that, as in the proof of \cite[Thm.~7.3.4]{rha}, we can form the preimage $g^{-1}(M_\alpha)$ of the subobject~$M_\alpha \subseteq M_\beta$ with respect to the morphism $g \colon G \to M_\beta$. Indeed, first of all, $M_\alpha$~really is a sub\-object of $M_\beta$, that is, the morphism $M_\alpha \to M_\beta$ is monic, since this is part of what it means to be a filtration in our sense \ref{direct-lambda-sequence}. Hence we can define the preimage $g^{-1}(M_\alpha)$ to be the kernel of the composite morphism \smash{$G \stackrel{g}{\twoheadrightarrow} M_\beta \twoheadrightarrow M_\beta/M_\alpha$}. 
\end{proof}

\begin{ipg}
  \label{inverse-lambda-sequence}
   Let $\lambda$ be an ordinal. A $\lambda$-inverse system $\{g_{\alpha\beta} \colon M_\beta \to M_\alpha\}_{\alpha \leqslant \beta \leqslant \lambda}$ in $\cM$, that is, a well-ordered inverse system in $\cM$ indexed by $\lambda$, can be (partially) illustrated as follows:
  \begin{equation}
    \label{eq:inverse-lambda-sequence}
    \begin{gathered}
    \xymatrix{
       \cdots \ar[r]^-{g_{\omega+1,\omega+2}} & M_{\omega+1} \ar@/_3.7pc/[rrrrrr]^(0.4){g_{0,\omega+1}} \ar[r]^-{g_{\omega,\omega+1}} & M_\omega \ar@/_2.8pc/[rrrrr]^-{g_{0,\omega}} \ar[r] & \cdots \ar[r]^-{g_{34}} & M_3 \ar@/_1.8pc/[rrr]^(0.4){g_{03}} \ar[r]^-{g_{23}} & M_2 \ar@/_1.3pc/[rr]^-{g_{02}} \ar[r]^-{g_{12}} & M_1 \ar[r]^-{g_{01}} & M_0
    }.
    \end{gathered}        
    \vspace*{1ex}
  \end{equation}   
  Such a system is called an \emph{inverse $\lambda$-sequence} if for each limit ordinal $\mu\leqslant\lambda$, the object $M_\mu$ together with the morphisms $g_{\alpha\mu} \colon M_\mu \to M_\alpha$ for $\alpha<\mu$, is a limit of the 
inverse subsystem $\{g_{\alpha\beta} \colon M_\beta \to M_\alpha\}_{\alpha \leqslant \beta <\mu}$. In symbols: \smash{$M_\mu = \varprojlim_{\alpha<\mu}M_\alpha$}.
  
  A \emph{continuous inverse $\lambda$-sequence} is an inverse $\lambda$-sequence \eqref{inverse-lambda-sequence} for which all the morphisms $g_{\alpha\beta} \colon M_\beta \to M_\alpha$ ($\alpha \leqslant \beta \leqslant \lambda$) are epic.

  A \emph{$\cC$-cofiltration} of an object $M \in \cM$ is a continuous inverse $\lambda$-sequence \eqref{inverse-lambda-sequence} with $M_0=0$ and $M_\lambda=M$ and such that $\Ker{g_{\alpha,\alpha+1}} \in \cC$ for all $\alpha<\lambda$.
\end{ipg}

In the case where \mbox{$\cM=\mathrm{Mod}\mspace{3mu}R$} is the category of (left) modules over a ring $R$, the next result is due to Trlifaj \cite[Lem.~2.3]{Trlifaj2003}. Having established the above version (\lemref[]{Eklof}) of Eklof's Lemma, the following more general version of Trlifaj's result can be inferred directly from \lemref{Eklof} by duality.

\begin{lem}[Trlifaj]
  \label{lem:Trlifaj}
  Let $\cM$ be a complete abelian category. Let $\cC$ be a class of objects in $\cM$ and let $M$ be an object in $\cM$. If $M$ has a $\cC^\perp$-cofiltration, then $M$ belongs to $\cC^\perp$.  
\end{lem}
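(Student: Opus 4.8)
The plan is to obtain this as the formal dual of Eklof's Lemma (\lemref{Eklof}), arguing in the opposite category. First I would observe that if $\cM$ is a complete abelian category — i.e.\ an abelian category with small products, equivalently with all small limits — then $\cM^\mathrm{op}$ is a cocomplete abelian category, so \lemref{Eklof} is available for $\cM^\mathrm{op}$.

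Next I would translate the data. Using the canonical identification $\Ext{\cM}{1}{X}{Y} \cong \Ext{\cM^\mathrm{op}}{1}{Y}{X}$, one sees that for a class $\cC$ of objects (viewed in both $\cM$ and $\cM^\mathrm{op}$) the right orthogonal $\cC^\perp$ formed in $\cM$ coincides with the left orthogonal ${}^\perp\cC$ formed in $\cM^\mathrm{op}$. Moreover, an inverse $\lambda$-sequence in $\cM$ is literally a direct $\lambda$-sequence in $\cM^\mathrm{op}$: epimorphisms turn into monomorphisms, kernels of the structure maps turn into cokernels, and limits at limit ordinals turn into colimits, so the continuity condition in \ref{inverse-lambda-sequence} matches the continuity condition in \ref{direct-lambda-sequence}. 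Consequently a $\cC^\perp$-cofiltration of $M$ in $\cM$ is precisely a ${}^\perp\cC$-filtration of $M$ in $\cM^\mathrm{op}$ (the boundary conditions $M_0 = 0$, $M_\lambda = M$ are unchanged, and the requirement $\Ker{g_{\alpha,\alpha+1}} \in \cC^\perp$ becomes the requirement that the cokernel of the corresponding map in $\cM^\mathrm{op}$ lies in ${}^\perp\cC$).

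Finally I would invoke \lemref{Eklof} in $\cM^\mathrm{op}$: since $M$ carries a ${}^\perp\cC$-filtration there, it belongs to ${}^\perp\cC$ as computed in $\cM^\mathrm{op}$, which is exactly $\cC^\perp$ as computed in $\cM$, as claimed. I do not expect any serious obstacle: the only thing that takes care is the bookkeeping — keeping straight which orthogonal is which under dualization and checking that our (stronger) notion of filtration from \ref{direct-lambda-sequence} dualizes verbatim to our notion of cofiltration from \ref{inverse-lambda-sequence} — but all of this is routine, and it is precisely why the statements were set up with the ``continuous'' (all structure maps monic, resp.\ epic) hypotheses built in.
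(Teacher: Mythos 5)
Your proposal is correct and takes precisely the same route as the paper: both pass to $\cM^{\mathrm{op}}$ (cocomplete since $\cM$ is complete), identify the $\cC^\perp$-cofiltration in $\cM$ with a ${}^\perp\cC$-filtration in $\cM^{\mathrm{op}}$, and then invoke Eklof's Lemma (\lemref{Eklof}). The paper's proof is just a terser version of the same dualization argument.
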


\begin{proof}
  Consider $M$ as an object and $\cC$ as a class of objects in the opposite category $\cM^\mathrm{op}$ (which is cocomplete as $\cM$ is complete). The given $\cC^\perp$-cofiltration of $M$ in $\cM$ yields a ${}^\perp\cC$-filtration of $M$ in $\cM^\mathrm{op}$, so by \lemref{Eklof} we get that $M$ belongs to ${}^\perp\cC$ in $\cM^\mathrm{op}$, which is nothing but $\cC^\perp$ in $\cM$.
\end{proof}

\section{Cotorsion pairs in the category of quiver representations}
\label{sec:main}

In this section, $Q$ is any quiver and $\cM$ is any abelian category. 

\begin{dfn}
  \label{dfn:classes}
  For a class $\cC$ of objects in $\cM$ we set 
  \begin{align*}
  \f_*(\cC) &= \{\f_i(C) \,| \ C \in \cC \text{ and } i \in Q_0 \}\,, \\
  \g_*(\cC) &= \{\g_i(C) \,| \ C \in \cC \text{ and } i \in Q_0 \}\,, \text{ and} \\
  \s_*(\cC) &= \{\s_i(C) \,| \ C \in \cC \text{ and } i \in Q_0 \}\,.
\end{align*}
Here $\f_i$ and $\g_i$ are the left and right adjoints of the evaluation functor $\e_i$ (provided that they exist, see~\thmref{adjoints-of-e}) and $\s_i$ is the stalk functor (see \ref{e-and-s}). We also set  
  \begin{align*}
    \Rep{Q}{\cC} &= \big\{\mspace{1mu}X \in \Rep{Q}{\cM} \,\big|\, \text{$X(i) \in \cC$ for all $i \in Q_0$} \big\}\,,
    \\   
    \upPhi(\cC) &= \left\{\!
    X \in \Rep{Q}{\cM} \left|\!\!\!
    \begin{array}{ll}
      \text{$\varphi^X_i$ is a monomorphism and} \\
      \text{$\Coker{\varphi^X_i} \in \cC$ for all $i \in Q_0$} 
    \end{array}
    \right.
    \!\!\!\!\!\right\}, \quad \text{and}
    \\    
    \upPsi(\cC) &= 
    \left\{\!
    X \in \Rep{Q}{\cM} \left|\!\!\!  
    \begin{array}{ll}
      \text{$\psi^X_i$ is an epimorphism and} \\
      \text{$\Ker{\psi^X_i} \in \cC$ for all $i \in Q_0$} 
    \end{array}
    \right.
    \!\!\!\!\!\right\}.    
  \end{align*}  
\end{dfn}

Note that \emph{a priori} the classes $\upPhi(\cA)$ and $\upPsi(\cB)$ from Theorem A (where $Q$ is left rooted) and Theorem B (where $Q$ is right rooted) in the Introduction (\secref{Introduction}) look different from what we have defined above. Indeed, representations in $\upPhi(\cA)$ as defined in the Introduction must satisfy $X(i) \in \cA$ for all $i \in Q_0$. However, as explained by the next result, this seeming difference is not real. Recall that left and right rooted quivers are defined in \ref{left-rooted} and \ref{right-rooted}.

\begin{prp}
  \label{prp:values}
  Let $\cM$ be an abelian category that satisfies \textnormal{AB3} and \textnormal{AB3$^*$}, and let $\cC$ be a class of objects in $\cM$.
  
  \begin{prt}
  
  \item If the quiver $Q$ is left rooted and if $\cC$ is closed under extensions and coproducts~in~$\cM$, then every $X \in \upPhi(\cC)$ has values in $\cC$, that is, $X(i) \in \cC$ for all $i \in Q_0$.

  \item If the quiver $Q$ is right rooted and if $\cC$ is closed under extensions and products~in~$\cM$, then every $X \in \upPsi(\cC)$ has values in $\cC$, that is, $X(i) \in \cC$ for all $i \in Q_0$.
  
  \end{prt}
\end{prp}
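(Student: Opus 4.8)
The plan is to prove part (a) by transfinite induction along the sequence $\{V_\alpha\}$ of \ref{left-rooted}, and then to deduce part (b) by dualizing, i.e.\ by applying (a) to the opposite quiver $Q^{\mathrm{op}}$ and the opposite category $\cM^{\mathrm{op}}$.

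For (a), I would fix $X \in \upPhi(\cC)$ and prove, by transfinite induction on the ordinal $\alpha$, that $X(i) \in \cC$ for every $i \in V_\alpha$; since $Q$ is left rooted there is an ordinal $\lambda$ with $V_\lambda = Q_0$, so this yields the claim. The start $\alpha = 0$ is vacuous because $V_0 = \emptyset$, and a limit ordinal $\alpha$ is immediate since then $V_\alpha = \bigcup_{\beta<\alpha}V_\beta$, so every $i \in V_\alpha$ already lies in some $V_\beta$ with $\beta < \alpha$. The substance is the successor step: assuming $X(j)\in\cC$ for all $j\in V_\beta$, take $i \in V_{\beta+1}$. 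By \corref{no-arrow} together with \lemref{Ascending} (equivalently, by the simplified description of $V_{\beta+1}$ recorded just before \lemref{Ascending}), every arrow $a$ with target $i$ has $\sou{a} \in V_\beta$, so $X(\sou{a}) \in \cC$ by the induction hypothesis. If $\arrt{i} = \emptyset$, then $\varphi^X_i$ is the morphism $0 \to X(i)$, whence $X(i) = \Coker{\varphi^X_i}\in\cC$ directly from $X \in \upPhi(\cC)$. Otherwise $\bigoplus_{a\in\arrt{i}}X(\sou{a})$ is a (nonempty) coproduct of objects of $\cC$, hence lies in $\cC$; and since $X \in \upPhi(\cC)$ the map $\varphi^X_i$ is a monomorphism, so there is a short exact sequence
\[
0 \longrightarrow \textstyle\bigoplus_{a\in\arrt{i}}X(\sou{a}) \stackrel{\varphi^X_i}{\longrightarrow} X(i) \longrightarrow \Coker{\varphi^X_i} \longrightarrow 0
\]
in $\cM$ whose two outer terms belong to $\cC$; closure of $\cC$ under extensions then forces $X(i)\in\cC$. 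This completes the induction.

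For (b), I would use the observation after \ref{right-rooted} that $Q$ is right rooted exactly when $Q^{\mathrm{op}}$ is left rooted, together with the fact that $\cM^{\mathrm{op}}$ again satisfies AB3 and AB3$^*$ and that $\cC$ being closed under extensions and products in $\cM$ is the same as $\cC$ being closed under extensions and coproducts in $\cM^{\mathrm{op}}$. By \ref{op} and the identification in \ref{k} of $\k_i^{Q,\cM}$ with $(\c_i^{Q^{\mathrm{op}},\cM^{\mathrm{op}}})^{\mathrm{op}}$, a representation lies in $\upPsi(\cC)$ computed in $\Rep{Q}{\cM}$ if and only if it lies in $\upPhi(\cC)$ computed in $\Rep{Q^{\mathrm{op}}}{\cM^{\mathrm{op}}} = \Rep{Q}{\cM}^{\mathrm{op}}$: indeed, $\psi^X_i$ being an epimorphism in $\cM$ corresponds to the relevant ``$\varphi$'' for $(Q^{\mathrm{op}},\cM^{\mathrm{op}})$ being a monomorphism, and $\Ker{\psi^X_i}$ corresponds to the relevant cokernel. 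Hence (b) follows by applying (a) over $Q^{\mathrm{op}}$ and $\cM^{\mathrm{op}}$.

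I do not anticipate a serious obstacle; the only delicate points are (i) arranging the induction so that the defining property of $V_{\beta+1}$ genuinely guarantees that all sources of arrows into $i$ lie in $V_\beta$—this is precisely what \corref{no-arrow} and \lemref{Ascending} provide—and (ii) handling the degenerate case $\arrt{i} = \emptyset$ (sources and isolated vertices) separately, where one reads $X(i) = \Coker{\varphi^X_i}$ off directly rather than through an extension; alternatively, reading ``closed under coproducts'' as including the empty coproduct gives $0 \in \cC$ and then the short exact sequence argument applies uniformly at every vertex.
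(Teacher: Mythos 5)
Your proof is correct and follows essentially the same route as the paper: transfinite induction along $\{V_\alpha\}$, using \corref{no-arrow} to guarantee that sources of arrows into a vertex of $V_{\beta+1}$ lie in $V_\beta$, then closure under coproducts and extensions applied to the short exact sequence $0 \to \bigoplus_{a\in\arrt{i}} X(\sou{a}) \to X(i) \to \Coker{\varphi^X_i} \to 0$, with part (b) obtained by duality. The only difference is that you separate out the degenerate case $\arrt{i}=\emptyset$, which the paper handles implicitly by treating the empty coproduct as $0$; both readings are fine.
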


\begin{proof}
  \proofoftag{a} Let $\{V_\alpha\}$ be the transfinite sequence of subsets of $Q_0$ from \ref{left-rooted}. Since $Q$ is left rooted we have $V_\lambda=Q_0$ for some ordinal $\lambda$. Thus, it suffices to prove the assertion
  \begin{displaymath}
  \tag{$P_\alpha$}
  \textnormal{For all $i \in V_\alpha$ and all $X \in \upPhi(\cC)$ one has $X(i) \in \cC$}
  \end{displaymath}
  for every ordinal $\alpha$. We do this by transfinite induction. The assertion ($P_0$) is true as $V_0=\emptyset$. If $\alpha$ is a limit ordinal and if ($P_\beta$) holds for all $\beta<\alpha$, then ($P_\alpha$) holds as well since, in this case, one has $V_\alpha = \bigcup_{\beta<\alpha}V_\beta$. Finally assume that $\alpha+1$ is a successor ordinal and that ($P_\alpha$) holds. We must prove that ($P_{\alpha+1}$) also holds. Let $i \in V_{\alpha+1}$ and let $X \in \upPhi(\cC)$ be given. As $\varphi^X_i$ is a monomorphism, there is a short exact sequence,
  \begin{displaymath}
    \xymatrix{
      0 \ar[r] & \bigoplus_{a \in \arrt{i}} X(\sou{a}) \ar[r]^-{\varphi^X_i} & 
      X(i) \ar[r] & \Coker{\varphi^X_i} \ar[r] & 0
    }.
  \end{displaymath}    
  Since $i \in V_{\alpha+1}$ it follows from \corref{no-arrow} that $\sou{a} \in V_\alpha$ for every $a \in \arrt{i}$, so by the induction hypothesis ($P_\alpha$) and the assumption that $\cC$ is closed under coproducts, we get that \smash{$\bigoplus_{a \in \arrt{i}} X(\sou{a})$} belongs to $\cC$. We also have $\Coker{\varphi^X_i} \in \cC$, and since $\cC$ is closed under extensions, we conclude that $X(i) \in \cC$, as desired.
  
  \proofoftag{b} Dual to (a).  
\end{proof}

With the notation from \dfnref{classes}, the results in \secref{Ext} enable us to compute the following perpendicular classes in the category $\Rep{Q}{\cM}$.

\begin{prp}
  \label{prp:Ext-in-Rep}
  Let $\cC$ be a class of objects in an abelian category $\cM$.
  \begin{prt}
  \item If $\cM$ satisfies \textnormal{AB4}, then one has $\f_*(\cC)^{\perp_n} = \Rep{Q}{\cC^{\perp_n}}$.
    
  \item If $\cM$ satisfies \textnormal{AB4$^*$}, then one has ${}^{\perp_n}\g_*(\cC) = \Rep{Q}{{}^{\perp_n}\cC}$.    
   
  \item If $\cM$ satisfies \textnormal{AB3} and has enough injectives and $\cC \supseteq \Inj{\cM}$, then \mbox{${}^\perp\s_*(\cC)=\upPhi({}^\perp\cC)$}.
  
  \item If $\cM$ satisfies \textnormal{AB3$^*$} and has enough projectives and $\cC \supseteq \Proj{\cM}$, then \mbox{$\s_*(\cC)^\perp = \upPsi(\cC^\perp)$}.

  \end{prt}
\end{prp}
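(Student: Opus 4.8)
The plan is to reduce each of the four statements to the Ext-isomorphisms established in \secref{Ext}, by unwinding the definitions of the perpendicular classes.

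For part (a), note that $X \in \f_*(\cC)^{\perp_n}$ means precisely that $\Ext{\Rep{Q}{\cM}}{n}{\f_i(C)}{X}=0$ for every $C \in \cC$ and every $i \in Q_0$. Since $\cM$ satisfies AB4, \prpref{Ext-e}(a) identifies this group with $\Ext{\cM}{n}{C}{X(i)}$, so $X$ lies in $\f_*(\cC)^{\perp_n}$ if and only if $X(i) \in \cC^{\perp_n}$ for all $i \in Q_0$, which is exactly the condition $X \in \Rep{Q}{\cC^{\perp_n}}$. Part (b) follows by the same argument applied to \prpref{Ext-e}(b), or by passing to $Q^\mathrm{op}$ and $\cM^\mathrm{op}$ as in \ref{op}.

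For part (c), I would prove the two inclusions separately. First suppose $X \in {}^\perp\s_*(\cC)$, that is, $\Ext{\Rep{Q}{\cM}}{1}{X}{\s_i(C)}=0$ for all $C \in \cC$ and $i \in Q_0$. Because $\cC \supseteq \Inj{\cM}$, this vanishing holds in particular with any injective object in place of $C$; since $\cM$ has enough injectives, \prpref{varphi-is-mono}(a) then gives that $\varphi^X_i$ is a monomorphism for every $i$. With this established, \prpref{Ext-s}(a) applies and yields $\Ext{\cM}{1}{\c_i(X)}{C} \cong \Ext{\Rep{Q}{\cM}}{1}{X}{\s_i(C)}=0$ for all $C \in \cC$; since $\c_i(X)=\Coker{\varphi^X_i}$, this says $\Coker{\varphi^X_i} \in {}^\perp\cC$, so $X \in \upPhi({}^\perp\cC)$. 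Conversely, if $X \in \upPhi({}^\perp\cC)$ then each $\varphi^X_i$ is already a monomorphism, so \prpref{Ext-s}(a) applies directly and, using $\Coker{\varphi^X_i} \in {}^\perp\cC$, gives $\Ext{\Rep{Q}{\cM}}{1}{X}{\s_i(C)} \cong \Ext{\cM}{1}{\Coker{\varphi^X_i}}{C}=0$ for all $C \in \cC$ and $i \in Q_0$; hence $X \in {}^\perp\s_*(\cC)$. Part (d) is entirely dual: one runs the same argument with \prpref{varphi-is-mono}(b) and \prpref{Ext-s}(b), the hypothesis $\cC \supseteq \Proj{\cM}$ playing the role of $\cC \supseteq \Inj{\cM}$; alternatively it follows from (c) by passing to $Q^\mathrm{op}$ and $\cM^\mathrm{op}$ and using the identification of $\k_i$ in \ref{k}.

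The delicate point, and the only one, is the asymmetry in parts (c) and (d): the Ext-isomorphism of \prpref{Ext-s} is available only once $\varphi^X_i$ (resp.\ $\psi^X_i$) is known to be a monomorphism (resp.\ epimorphism), so for the left-to-right inclusion one cannot invoke it blindly. This is exactly why the hypotheses $\cC \supseteq \Inj{\cM}$ and $\cC \supseteq \Proj{\cM}$ cannot be dropped---they are what let \prpref{varphi-is-mono} upgrade membership in the left- or right-perpendicular class of the stalks into the required monomorphism or epimorphism. For the reverse inclusion this never arises, since membership in $\upPhi$ or $\upPsi$ builds that condition in from the start; and parts (a), (b) are immediate, needing nothing beyond the clean adjunction-level isomorphisms of \prpref{Ext-e}.
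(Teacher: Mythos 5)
Your proof is correct and follows exactly the same route as the paper: parts (a) and (b) unwind to \prpref{Ext-e}, while for parts (c) and (d) the nontrivial inclusion uses \prpref{varphi-is-mono} to establish the monomorphism (resp.\ epimorphism) condition before \prpref{Ext-s} can be applied, and the easy inclusion uses \prpref{Ext-s} directly. Your discussion of the asymmetry in (c) and (d) accurately pinpoints the role of the hypotheses $\cC \supseteq \Inj{\cM}$ and $\cC \supseteq \Proj{\cM}$.
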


\begin{proof}
  Parts (a) and (b) follow immediately from \prpref{Ext-e}. In part (c), the inclusion ``$\supseteq$'' follows from \prpref{Ext-s}(a), and the opposite inclusion ``$\subseteq$'' follows from \prpref[Propositions~]{varphi-is-mono}(a) and \prpref[]{Ext-s}(a). Similarly, (d) follows from \prpref[Propositions~]{Ext-s}(b) and \prpref[]{varphi-is-mono}(b).
\end{proof}

\begin{thm}
  \label{thm:4xcotorsion}
 Let $\cM$ be an abelian category that satisfies \textnormal{AB4} and \textnormal{AB4*} and which has enough projectives and injectives. Let $(\cA,\cB)$ be a cotorsion pair in $\cM$ which is generated by a class $\cA_0$ (e.g.~$\cA_0=\cA$) and cogenerated by a class~$\cB_0$ (e.g.~$\cB_0=\cB$). 
  
  \begin{prt}
  
  \item The cotorsion pair in $\Rep{Q}{\cM}$ generated by $\f_*(\cA_0)$ is 
  \begin{displaymath}  
     \mathfrak{G}_{\f_*(\cA_0)} = ({}^\perp\Rep{Q}{\cB},\Rep{Q}{\cB})\;.
  \end{displaymath}   
  If $\cB_0 \supseteq \Inj{\cM}$, then the cotorsion pair in $\Rep{Q}{\cM}$ cogenerated by $\s_*(\cB_0)$ is
  \begin{displaymath}  
    \mathfrak{C}_{\s_*(\cB_0)} = (\upPhi(\cA),\upPhi(\cA)^\perp)\;.
  \end{displaymath}
  
  \item The cotorsion pair in $\Rep{Q}{\cM}$ cogenerated by $\g_*(\cB_0)$ is 
  \begin{displaymath}  
     \mathfrak{C}_{\g_*(\cB_0)} = (\Rep{Q}{\cA},\Rep{Q}{\cA}^\perp)\;.
  \end{displaymath}      
  If $\cA_0 \supseteq \Proj{\cM}$, then the cotorsion pair in $\Rep{Q}{\cM}$ generated by $\s_*(\cA_0)$ is
    \begin{displaymath}  
      \mathfrak{G}_{\s_*(\cA_0)} = ({}^\perp\upPsi(\cB),\upPsi(\cB))\;.
  \end{displaymath}  

    \end{prt}  
\end{thm}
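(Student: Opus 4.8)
The plan is to read all four assertions straight off \prpref{Ext-in-Rep} together with the bare definitions of the cotorsion pair \emph{generated}, respectively \emph{cogenerated}, by a class (recalled in \secref{cotorsion}); no new ideas are needed, since the substantive work has already been done in Sections~\ref{sec:adj-e}--\ref{sec:Ext}. First I would record the standing facts: as $\cM$ satisfies \textnormal{AB4} and \textnormal{AB4$^*$} (hence \textnormal{AB3} and \textnormal{AB3$^*$}) and has enough projectives and injectives, the adjoints $\f_i$ and $\g_i$ of $\e_i$ exist by \thmref{adjoints-of-e}, so the classes $\f_*(\cA_0)$ and $\g_*(\cB_0)$ are defined, and every hypothesis of parts (a)--(d) of \prpref{Ext-in-Rep} is in force. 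I would also recall that, for \emph{any} class $\cC$ in any abelian category, the pairs $({}^\perp(\cC^\perp),\cC^\perp)$ and $({}^\perp\cC,({}^\perp\cC)^\perp)$ are automatically cotorsion pairs, so once a left or right half has been identified there is nothing further to verify.

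For the first assertion of (a): by definition $\mathfrak{G}_{\f_*(\cA_0)}=({}^\perp(\f_*(\cA_0)^\perp),\f_*(\cA_0)^\perp)$, while \prpref{Ext-in-Rep}(a) with $n=1$ gives $\f_*(\cA_0)^\perp=\Rep{Q}{\cA_0^\perp}$; since $(\cA,\cB)$ is generated by $\cA_0$ one has $\cA_0^\perp=\cB$, so $\f_*(\cA_0)^\perp=\Rep{Q}{\cB}$, which is the claim. For the second assertion of (a), under the hypothesis $\cB_0\supseteq\Inj{\cM}$: by definition $\mathfrak{C}_{\s_*(\cB_0)}=({}^\perp\s_*(\cB_0),({}^\perp\s_*(\cB_0))^\perp)$; the inclusion $\cB_0\supseteq\Inj{\cM}$ is exactly what is needed to apply \prpref{Ext-in-Rep}(c), which yields ${}^\perp\s_*(\cB_0)=\upPhi({}^\perp\cB_0)$; and since $(\cA,\cB)$ is cogenerated by $\cB_0$ one has ${}^\perp\cB_0=\cA$, so ${}^\perp\s_*(\cB_0)=\upPhi(\cA)$, as desired.

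Part (b) I would deduce from part (a) by passing to the opposite quiver $Q^\mathrm{op}$ and the opposite category $\cM^\mathrm{op}$: there $(\cB,\cA)$ is a cotorsion pair which is generated by $\cB_0$ and cogenerated by $\cA_0$, and under the identification $\Rep{Q^\mathrm{op}}{\cM^\mathrm{op}}=\Rep{Q}{\cM}^\mathrm{op}$ (see \ref{op}) one has $(\f_i^{Q^\mathrm{op},\cM^\mathrm{op}})^\mathrm{op}=\g_i$ (see \ref{g}), the stalk functor is self-dual, the class $\upPhi$ formed over $(Q^\mathrm{op},\cM^\mathrm{op})$ is the class $\upPsi$ formed over $(Q,\cM)$ (compare \ref{k}), and $\Inj{\cM^\mathrm{op}}=\Proj{\cM}$; applying part (a) and translating back yields both statements of (b). Alternatively, and just as quickly, one repeats the computation: \prpref{Ext-in-Rep}(b) gives ${}^\perp\g_*(\cB_0)=\Rep{Q}{{}^\perp\cB_0}=\Rep{Q}{\cA}$, and \prpref{Ext-in-Rep}(d), applicable since $\cA_0\supseteq\Proj{\cM}$, gives $\s_*(\cA_0)^\perp=\upPsi(\cA_0^\perp)=\upPsi(\cB)$.

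As for the main obstacle: at this stage there is essentially none — the argument is bookkeeping, the genuine content residing in \prpref{Ext-in-Rep} (and behind it \lemref{Ext1-general}, \prpref[Propositions~]{Ext-e}, \prpref[]{Ext-s}, \prpref[]{varphi-is-mono}, and the adjunctions of Sections~\ref{sec:adj-e}--\ref{sec:adj-s}). The only points to watch are that each application of \prpref{Ext-in-Rep} invokes the appropriate Grothendieck axiom and the appropriate containment among $\cA_0$, $\cB_0$, $\Proj{\cM}$, $\Inj{\cM}$, and that every $\perp$ appearing in a $\mathfrak{G}$ or $\mathfrak{C}$ is understood inside $\Rep{Q}{\cM}$.
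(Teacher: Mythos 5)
Your proposal is correct and takes essentially the same approach as the paper, which simply cites \prpref{Ext-in-Rep}(a,c) for part (a) and \prpref{Ext-in-Rep}(b,d) for part (b); your write-up just spells out the bookkeeping (the equalities $\cA_0^\perp=\cB$, ${}^\perp\cB_0=\cA$, and the definitions of $\mathfrak{G}$ and $\mathfrak{C}$) that the paper leaves implicit.
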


\begin{proof}
  Part (a) follows from \prpref{Ext-in-Rep}(a,c), and (b) from \prpref{Ext-in-Rep}(b,d).
\end{proof}

\begin{rmk}
  \label{rmk:generated}
  If $\cA_0$, respectively, $\cB_0$, is a set, then so is $\f_*(\cA_0)$, respectively, $\g_*(\cB_0)$. Thus, if the cotorsion pair $(\cA,\cB)$ is generated by a set, then so is $({}^\perp\Rep{Q}{\cB},\Rep{Q}{\cB})$, and if $(\cA,\cB)$ is cogenerated by a set, then so is $(\Rep{Q}{\cA},\Rep{Q}{\cA}^\perp)$. %Such cotorsion pairs are ``often'' complete; see \ref{complete}.
\end{rmk}

We will shortly show (\thmref{cotorsion-pairs-agree} below) that if $Q$ is left rooted, then the two cotorsion pairs in part (a) of the theorem above are the same and, similarly, if $Q$ is right rooted, then the two cotorsion pairs in part (b) are the same.

Suppose that the cotorsion pair $(\cA,\cB)$ has a certain property, for example, $(\cA,\cB)$ could be hereditary or complete. It is then natural to ask if the induced cotorsion pairs in \thmref{4xcotorsion} have the same property.

\begin{prp}
  \label{prp:hereditary}
  Adopt the setup and the notation from \thmref{4xcotorsion}. If the cotorsion pair $(\cA,\cB)$ is hereditary, then so are all four cotorsion pairs in \thmref{4xcotorsion}.
\end{prp}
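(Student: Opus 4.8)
The plan is to reduce the claim, for each of the four cotorsion pairs, to the characterization of hereditariness recalled in \ref{hereditary}: in an abelian category with enough projectives a cotorsion pair is hereditary exactly when its left half is resolving, and in one with enough injectives exactly when its right half is coresolving. Since $\cM$ satisfies AB4 and AB4$^*$ (hence AB3 and AB3$^*$) and has enough projectives and injectives, \corref{enough-projectives} shows that $\Rep{Q}{\cM}$ likewise has enough projectives and enough injectives, so this criterion is available in $\Rep{Q}{\cM}$. The hypothesis that $(\cA,\cB)$ is hereditary will be used only through its reformulation (again \ref{hereditary}): $\cA$ is resolving and $\cB$ is coresolving in $\cM$. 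Finally, by \thmref{4xcotorsion} each of $\Rep{Q}{\cA}$ and $\upPhi(\cA)$ is the left half of a cotorsion pair in $\Rep{Q}{\cM}$, hence a class of the form ${}^\perp\cY$, and each of $\Rep{Q}{\cB}$ and $\upPsi(\cB)$ is the right half of one, hence of the form $\cX^\perp$; so the first two automatically contain all projectives and are closed under extensions, and the last two automatically contain all injectives and are closed under extensions. Thus in each case the one remaining point is closure under kernels of epimorphisms (for the left halves), respectively under cokernels of monomorphisms (for the right halves).

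For $\Rep{Q}{\cA}$ and $\Rep{Q}{\cB}$ this is immediate and computed vertex-wise. Kernels, cokernels and exactness in $\Rep{Q}{\cM}$ are formed vertex-wise (\ref{inherited-structures}); so if $0\to K\to X\to X''\to0$ is exact in $\Rep{Q}{\cM}$ with $X,X''\in\Rep{Q}{\cA}$, then for each $i\in Q_0$ the object $K(i)$ is the kernel of an epimorphism between objects of $\cA$ and hence lies in $\cA$, as $\cA$ is resolving; so $K\in\Rep{Q}{\cA}$. Dually $\Rep{Q}{\cB}$ is closed under cokernels of monomorphisms because $\cB$ is coresolving. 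Hence $\Rep{Q}{\cA}$ is resolving and $\Rep{Q}{\cB}$ is coresolving, so $(\Rep{Q}{\cA},\Rep{Q}{\cA}^\perp)$ and $({}^\perp\Rep{Q}{\cB},\Rep{Q}{\cB})$ are hereditary.

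The substantive case is $\upPhi(\cA)$. Let $0\to K\to X\to X''\to0$ be exact in $\Rep{Q}{\cM}$ with $X,X''\in\upPhi(\cA)$; I must show that $\varphi^K_i$ is a monomorphism and $\c_i(K)=\Coker{\varphi^K_i}\in\cA$ for every $i\in Q_0$. Fix $i$. Evaluating the sequence at $\sou{a}$ for each $a\in\arrt{i}$, taking the coproduct over $\arrt{i}$, and using the naturality of $\varphi^{(-)}_i$ from \ref{c}, I obtain a commutative diagram
\begin{displaymath}
  \xymatrix@C=1.3pc{
    0 \ar[r] & \bigoplus_{a \in \arrt{i}} K(\sou{a}) \ar[d]^-{\varphi^K_i} \ar[r] & \bigoplus_{a \in \arrt{i}} X(\sou{a}) \ar[d]^-{\varphi^X_i} \ar[r] & \bigoplus_{a \in \arrt{i}} X''(\sou{a}) \ar[d]^-{\varphi^{X''}_i} \ar[r] & 0 \\
    0 \ar[r] & K(i) \ar[r] & X(i) \ar[r] & X''(i) \ar[r] & 0\,.
  }
\end{displaymath}
The bottom row is short exact since evaluation is exact, and the top row is short exact because it is the coproduct over $\arrt{i}$ of the short exact sequences $0\to K(\sou{a})\to X(\sou{a})\to X''(\sou{a})\to0$: right exactness is automatic, and left exactness is exactly the point at which the hypothesis AB4 (a coproduct of monomorphisms is a monomorphism) is used. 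As $\varphi^X_i$ and $\varphi^{X''}_i$ are monomorphisms, the Snake Lemma yields $\Ker{\varphi^K_i}=0$ together with a short exact sequence $0\to\c_i(K)\to\c_i(X)\to\c_i(X'')\to0$. The first says $\varphi^K_i$ is a monomorphism; in the second $\c_i(X),\c_i(X'')\in\cA$ and $\cA$, being resolving, is closed under kernels of epimorphisms, so $\c_i(K)\in\cA$. Hence $K\in\upPhi(\cA)$, $\upPhi(\cA)$ is resolving, and $(\upPhi(\cA),\upPhi(\cA)^\perp)$ is hereditary.

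Finally $\upPsi(\cB)$ is handled dually: passing to $Q^\mathrm{op}$ and $\cM^\mathrm{op}$ (cf.~\ref{op} and \ref{k}) turns AB4 into AB4$^*$, interchanges $\upPhi$ and $\upPsi$, and replaces $(\cA,\cB)$ by the still-hereditary cotorsion pair $(\cB,\cA)$, so the argument just given shows that $\upPsi(\cB)$ is coresolving in $\Rep{Q}{\cM}$; equivalently, one repeats the Snake Lemma argument with products in place of coproducts, using that the relevant maps $\psi^{(-)}_i$ are epimorphisms, that a product of epimorphisms is an epimorphism (AB4$^*$), and that $\cB$ is coresolving. Therefore all four cotorsion pairs in \thmref{4xcotorsion} are hereditary. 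I expect the only genuinely delicate step to be the short exactness of the top row (respectively bottom row, dually) of the displayed diagram: that is where AB4 and AB4$^*$ are actually needed, and it is exactly what guarantees that $\varphi^K_i$ (and, dually, the relevant $\psi$) is again a monomorphism (respectively epimorphism); everything else is vertex-wise bookkeeping or a formality about perpendicular classes.
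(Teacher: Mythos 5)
Your proof is correct and follows essentially the same route as the paper: reduce via \ref{hereditary} and \corref{enough-projectives} to showing that $\Rep{Q}{\cA}$ and $\upPhi(\cA)$ are resolving and $\Rep{Q}{\cB}$ and $\upPsi(\cB)$ are coresolving, handle $\Rep{Q}{\cA}$ and $\Rep{Q}{\cB}$ vertex-wise, and use the Snake Lemma on the diagram comparing $\bigoplus_{a\in\arrt{i}}(-)(\sou{a})$ with $(-)(i)$ for the $\upPhi$ and (dually) $\upPsi$ cases. Your explicit remark that AB4 (resp.\ AB4$^*$) is what makes the top (resp.\ bottom) row short exact is a helpful detail the paper leaves implicit.
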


\begin{proof}
  Recall from \corref{enough-projectives} that the abelian category $\Rep{Q}{\cM}$ has enough projec\-tives and enough injectives, so by \ref{hereditary} we only need to show that if $\cA$ is resolving, then so are $\Rep{Q}{\cA}$ and $\upPhi(\cA)$, and if $\cB$ is coresolving, then so are $\Rep{Q}{\cB}$ and $\upPsi(\cB)$.
  
  If $\cA$ is resolving, then clearly so is $\Rep{Q}{\cA}$. To see that $\upPhi(\cA)$ is resolving, note that $\upPhi(\cA)$ is closed under extensions and contains all projective objects in $\Rep{Q}{\cM}$ as $\upPhi(\cA)$ is the left half of a cotorsion pair. It remains to see that if $0 \to X' \to X \to X'' \to 0$ is a short exact sequence in $\Rep{Q}{\cM}$ with $X,X'' \in \upPhi(\cA)$, then one also has $X' \in \upPhi(\cA)$. To this end, consider for every $i \in Q_0$ the following commutative diagram with exact rows:
\begin{displaymath}
  \xymatrix@C=1.3pc{
    0 \ar[r]
    & 
    \bigoplus_{a \in \arrt{i}} X'(\sou{a}) \ar[d]^-{\varphi^{X'}_i} \ar[r] 
    & 
    \bigoplus_{a \in \arrt{i}} X(\sou{a}) \ar[d]^-{\varphi^X_i} \ar[r]
    & 
    \bigoplus_{a \in \arrt{i}} X''(\sou{a}) \ar[d]^-{\varphi^{X''}_i} \ar[r] 
    & 0
    \\
    0 \ar[r] & X'(i) \ar[r] & X(i) \ar[r] & X''(i) \ar[r] & 0.
  }
\end{displaymath}   
By assumption, \smash{$\varphi^X_i$} and \smash{$\varphi^{X''}_i$} are monomorphisms with cokernels in $\cA$. From the Snake Lemma and the assumption that $\cA$ is resolving, it now follows that \smash{$\varphi^{X'}_i$} is a monomorphism with cokernel in $\cA$. Since this is true for every $i \in Q_0$ we conclude that $X' \in \upPhi(\cA)$.

Similar arguments show that if $\cB$ is coresolving, then so are $\Rep{Q}{\cB}$ and $\upPsi(\cB)$.
\end{proof}

As mentioned in \ref{complete}, if the category $\cM$ is Grothendieck with enough projectives and the cotorsion pair $(\cA,\cB)$ is generated by a set, then it is also complete. If $(\cA,\cB)$ is complete for this strong reason, then the induced cotorsion pair \smash{$({}^\perp\Rep{Q}{\cB},\Rep{Q}{\cB})$}---which by \thmref{cotorsion-pairs-agree} below is equal to \smash{$(\upPhi(\cA),\upPhi(\cA)^\perp)$} when $Q$ is left rooted---will also be complete, since it too is generated by a set (see \rmkref{generated}) and $\Rep{Q}{\cM}$ is Grothendieck with enough projectives (see \ref{inherited-structures} and \corref{enough-projectives}). 

Many complete cotorsion pairs in e.g.~\mbox{$\cM = \mathrm{Mod}\mspace{3mu}R$} are known to be generated by sets. For example, this is the case for the trivial cotorsion pairs \mbox{$(\Proj{R},\mathrm{Mod}\mspace{3mu}R)$} (generated by $\{0\}$) and \mbox{$(\mathrm{Mod}\mspace{3mu}R,\Inj{R})$} (generated by $\{R/\mathfrak{a} \,|\, \mathfrak{a} \subseteq R \textnormal{ ideal}\}$ beacuse of Baer's criterion). Also the flat cotorsion pair \mbox{$(\mathrm{Flat}\mspace{2.5mu}R,(\mathrm{Flat}\mspace{2.5mu}R)^\perp)$} is generated by a set, in fact, \emph{the flat cover conjecture} was settled affirmatively by proving the existence of such a generating set; see~\cite[Prop.~2]{BEE-01}.

This gives a partial answer to the following:

\begin{qst}
  Is it true that if the cotorsion pair $(\cA,\cB)$ is complete, then so are the four cotorsion pairs  in \thmref{4xcotorsion}?
\end{qst}

The next example gives a positive answer to this question in some other special cases.

\begin{exa}
  Let $Q$ be a finite quiver and let \mbox{$\cM = \mathrm{Mod}\mspace{3mu}R$}. In this case, the path ring $RQ$ is unital and the category $\Rep{Q}{\cM}$ is equivalent to \mbox{$\mathrm{Mod}\mspace{3mu}RQ$}. For a cotorsion pair $(\cA,\cB)$ in \mbox{$\cM = \mathrm{Mod}\mspace{3mu}R$} we write $(\tilde{\cA},\tilde{\cB})$ for the induced cotorsion pair $({}^\perp\Rep{Q}{\cB},\Rep{Q}{\cB})=(\upPhi(\cA),\upPhi(\cA)^\perp)$
in \mbox{$\mathrm{Mod}\mspace{3mu}RQ$}; see \thmref{cotorsion-pairs-agree}(a) below.
  
 We write $\GProj{R}$ for the class of Gorenstein projective (left) $R$-modules; see~\cite{EEnOJn95b}. Under mild assumptions on $R$ it is known that every $R$-module has a \emph{special Gorenstein projective precover} (in the sense of Xu~\cite[Prop.~2.1.3]{xu}), see e.g.~(proofs of) J{\o}rgensen~\cite[Cor.~2.13]{PJr07} and Murfet and Salarian \cite[Thm.~A.1]{DMfSSl11}, and hence $(\cA,\cB)=(\GProj{R},(\GProj{R})^\perp)$ is a complete cotorsion pair in \mbox{$\mathrm{Mod}\mspace{3mu}R$}. It not known if this cotorsion pair is generated~by~a~set! Nevertheless, in this case the induced cotorsion pair $(\tilde{\cA},\tilde{\cB})$ in \mbox{$\mathrm{Mod}\mspace{3mu}RQ$} will be complete as well, since it is nothing but $(\GProj{RQ},(\GProj{RQ})^\perp)$. This follows from \cite[Thm.~3.5.1(b)]{EshraghiHafeziSalarian}, as mentioned in the Introduction (\secref{Introduction}).
 
Similarly, under weak hypotheses, see Krause~\cite[Thm.~7.12]{HKr05}, the Gorenstein injective cotorsion pair $({}^\perp(\GInj{R}),\GInj{R})$ is complete, even though it is not known to be generated by a set. The induced cotorsion pair $(\bar{\cA},\bar{\cB})=(\Rep{Q}{\cA},\Rep{Q}{\cA}^\perp)=({}^\perp\upPsi(\cB),\upPsi(\cB))$ in \mbox{$\mathrm{Mod}\mspace{3mu}RQ$}, see \thmref{cotorsion-pairs-agree}(b) below, is also complete as it is nothing but the Goren\-stein injective cotorsion pair $({}^\perp(\GInj{RQ}),\GInj{RQ})$ in \mbox{$\mathrm{Mod}\mspace{3mu}RQ$}. See the Introduction.
\end{exa}

Recall from \ref{left-rooted} and \ref{right-rooted} the definitions of left rooted and right rooted quivers.

\begin{thm}
  \label{thm:cotorsion-pairs-agree}
  Adopt the setup and the notation from \thmref{4xcotorsion}.
  \begin{prt}

  \item If $Q$ is left rooted, then one has  
  $({}^\perp\Rep{Q}{\cB},\Rep{Q}{\cB})=(\upPhi(\cA),\upPhi(\cA)^\perp)$.
  
  \item If $Q$ is right rooted, then one has  
  $(\Rep{Q}{\cA},\Rep{Q}{\cA}^\perp)=({}^\perp\upPsi(\cB),\upPsi(\cB))$.
  
  \end{prt}
\end{thm}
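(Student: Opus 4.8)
The plan is to prove part (a) by checking that the two cotorsion pairs of \thmref{4xcotorsion}(a) have the same left-hand class, namely $\upPhi(\cA)={}^\perp\Rep{Q}{\cB}$; part (b) will then follow by applying part (a) to the opposite quiver $Q^{\mathrm{op}}$ and the opposite category $\cM^{\mathrm{op}}$, using \ref{op}, \ref{k}, and the fact that $Q^{\mathrm{op}}$ is left rooted when $Q$ is right rooted (so that $\upPhi$ and $\upPsi$, and the classes $\Rep{Q}{\cA}$ and $\Rep{Q}{\cB}$, interchange). By \prpref{Ext-in-Rep}(c) applied with $\cC=\cB$ — note $\cB\supseteq\Inj{\cM}$ and $\cM$ has enough injectives — one has $\upPhi(\cA)={}^\perp\s_*(\cB)$. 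Since each $\s_i(B)$ takes values $B\in\cB$ and $0\in\cB$, we have $\s_*(\cB)\subseteq\Rep{Q}{\cB}$, whence ${}^\perp\Rep{Q}{\cB}\subseteq{}^\perp\s_*(\cB)=\upPhi(\cA)$ for free. So the real content is the reverse inclusion: $\Ext{\Rep{Q}{\cM}}{1}{X}{Y}=0$ for all $X\in\upPhi(\cA)$ and all $Y\in\Rep{Q}{\cB}$.

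To get this, I would show that every $Y\in\Rep{Q}{\cB}$ admits a $\s_*(\cB)$-cofiltration in the sense of \ref{inverse-lambda-sequence}. Granting that, the conclusion is immediate from Trlifaj's Lemma (\lemref{Trlifaj}): $\Rep{Q}{\cM}$ is complete, since limits are computed vertexwise (\ref{inherited-structures}) and $\cM$ is complete by \textnormal{AB4*}; and for a fixed $X\in\upPhi(\cA)={}^\perp\s_*(\cB)$ every object of $\s_*(\cB)$ lies in $\{X\}^{\perp}$, so a $\s_*(\cB)$-cofiltration of $Y$ is in particular a $\{X\}^{\perp}$-cofiltration, and \lemref{Trlifaj} yields $Y\in\{X\}^{\perp}$, i.e.\ $\Ext{\Rep{Q}{\cM}}{1}{X}{Y}=0$.

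For the cofiltration I would use the transfinite sequence $\{V_\alpha\}$ of \ref{left-rooted}, which satisfies $V_\lambda=Q_0$ for some $\lambda$. Well-ordering each difference $V_{\alpha+1}\setminus V_\alpha$ and concatenating these orders level by level produces a well-ordering $Q_0=\{i_\xi\}_{\xi<\lambda'}$ such that $j$ precedes $k$ whenever there is an arrow $j\to k$: if $k\in V_{\alpha+1}\setminus V_\alpha$ then $j\in V_\alpha$ by \corref{no-arrow}, so $j$ lies at a strictly earlier level (in particular $Q$ has no loops). For $\xi\leqslant\lambda'$ set $W_\xi=\{i_\eta\mid\eta<\xi\}$. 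Each $W_\xi$ is closed under predecessors in $Q$, hence $Q_0\setminus W_\xi$ is closed under successors, so the "truncation" subrepresentation $K_\xi\subseteq Y$ with $K_\xi(j)=Y(j)$ for $j\notin W_\xi$ and $K_\xi(j)=0$ for $j\in W_\xi$ is a genuine subobject of $Y$ (one checks the inclusion respects the structure maps using successor-closedness). Then $(K_\xi)_{\xi\leqslant\lambda'}$ is a descending chain with $K_0=Y$ and $K_{\lambda'}=0$, and for each $\xi$ the quotient $K_\xi/K_{\xi+1}$ is supported only at $i_\xi$, with value $Y(i_\xi)\in\cB$ and all structure maps zero: arrows into $i_\xi$ originate at a predecessor, which lies in $W_\xi$ and so has value $0$ in $K_\xi$; arrows out of $i_\xi$ terminate at a successor, which lies outside $W_{\xi+1}$ and so has value $0$ in $K_\xi/K_{\xi+1}$; and there are no loops. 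Thus $K_\xi/K_{\xi+1}\cong\s_{i_\xi}(Y(i_\xi))\in\s_*(\cB)$, and $Y_\xi:=Y/K_\xi$ together with the evident epimorphisms is the sought cofiltration.

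The step I expect to need the most care is the continuity requirement at limit ordinals: one must verify that for a limit $\mu$ (including $\mu=\lambda'$ if $\lambda'$ is a limit) the object $Y_\mu=Y/K_\mu$ equals $\varprojlim_{\xi<\mu}Y_\xi$, and not something strictly larger — the familiar warning being that $\mathbb{Z}/\bigcap_n 2^n\mathbb{Z}$ is not $\varprojlim_n\mathbb{Z}/2^n\mathbb{Z}$. This goes through here because limits in $\Rep{Q}{\cM}$ are vertexwise and, at each vertex $j$, the tower $\bigl((Y/K_\xi)(j)\bigr)_{\xi<\mu}$ becomes constant equal to $Y(j)$ with identity transition maps once $j\in W_\xi$ (and is identically $0$ otherwise), so its inverse limit is exactly $Y_\mu(j)$. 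With this in hand the cofiltration exists, and the argument above finishes part (a); part (b) then follows by the dualization described at the outset.
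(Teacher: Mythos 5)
Your proof is correct, and it reaches the conclusion by the same high-level route as the paper (one easy inclusion by comparing cogenerators, then a cofiltration argument plus Trlifaj's Lemma for the other), but the cofiltration you build is genuinely different from the paper's. The paper keeps the transfinite sequence $\{V_\alpha\}$ from \ref{left-rooted} intact: it defines $Y_\alpha$ by zeroing out $Y$ at all vertices outside $V_\alpha$, so that the kernel of $Y_{\alpha+1}\to Y_\alpha$ is a \emph{product} $\prod_{i\in V_{\alpha+1}\smallsetminus V_\alpha}\s_i(Y(i))$, and it must then invoke closure of $\upPhi(\cA)^\perp$ under products to see that this kernel lies in $\upPhi(\cA)^\perp$. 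You instead refine $\{V_\alpha\}$ to a well-ordering of $Q_0$ compatible with the arrows (using \corref{no-arrow} and the automatic absence of loops in a left rooted quiver), so that each kernel is a \emph{single} stalk $\s_{i_\xi}(Y(i_\xi))\in\s_*(\cB)\subseteq\upPhi(\cA)^\perp$; this sidesteps the closure-under-products step entirely, at the cost of the extra bookkeeping of concatenating well-orders and the explicit continuity check at limit stages (which the paper dismisses as ``not hard to see'' but which you rightly verify, since this is exactly where the familiar $\varprojlim\mathbb{Z}/2^n\mathbb{Z}$ trap lives; your vertexwise eventual-constancy argument handles it). Both routes are sound; yours trades a closure property of the perpendicular class for a finer filtration, which is arguably cleaner since it places all the stalks directly in $\s_*(\cB)$ rather than in the larger class $\upPhi(\cA)^\perp$. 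Part (b) follows from (a) by duality in both treatments, exactly as you outline.
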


\begin{proof}
 \proofoftag{a} From \thmref{4xcotorsion} we have
 \begin{displaymath}
   \Rep{Q}{\cB} = \f_*(\cA)^\perp
   \quad \text{and} \quad
   \upPhi(\cA)={}^\perp\s_*(\cB)\;,   
 \end{displaymath}   
 and it must be shown that $\Rep{Q}{\cB}=\upPhi(\cA)^\perp$. For all objects $A \in \cA$ and $B \in \cB$ and all vertices $i,j \in Q_0$ we have
 \begin{displaymath}
   \Ext{\Rep{Q}{\cM}}{1}{\f_i(A)}{\s_j(B)} \cong 
  \Ext{\cM}{1}{A}{\e_i\s_j(B)} \cong 0\;,
 \end{displaymath}   
 where the first isomorphism follows from \prpref{Ext-e}(a) and the second isomorphism follows as $(\cA,\cB)$ is a cotorsion pair and since $\e_i\s_j(B)$ is in $\cB$ (more precisely, $\e_i\s_j(B)=0$ if $i \neq j$ and $\e_i\s_j(B)=B$ if $i=j$). This shows the inclusion $\f_*(\cA) \subseteq {}^\perp\s_*(\cB)$, and consequently 
 \begin{displaymath}
   \Rep{Q}{\cB} = \f_*(\cA)^\perp \supseteq ({}^\perp\s_*(\cB))^\perp =
   \upPhi(\cA)^\perp\;.
 \end{displaymath}   
 To show the opposite inclusion, it suffices by \lemref{Trlifaj} to argue that every $Y \in \Rep{Q}{\cB}$ has a $\upPhi(\cA)^\perp$-cofiltration. To this end, let $\{V_\alpha\}$ be the transfinite sequence of subsets of $Q_0$ from \ref{left-rooted}. As $Q$ is left rooted we have $V_\lambda=Q_0$ for some ordinal $\lambda$. For any $Y \in \Rep{Q}{\cM}$ we define, for every ordinal $\alpha \leqslant \lambda$, a representation $Y_\alpha \in \Rep{Q}{\cM}$ as follows:
\begin{displaymath}
  \begin{gathered}
  Y_\alpha(i) =
  \left\{\!\!\!
    \begin{array}{cll}
      Y(i) & \text{if} & i \in V_\alpha \\
       0 & \text{if} & i \notin V_\alpha     
    \end{array}
  \right.
  \end{gathered}
  \qquad (i \in Q_0).
\end{displaymath}
For an arrow \mbox{$a \colon\! i \to j$} in $Q$ the morphism
\begin{displaymath}
  \begin{gathered}
  \xymatrix{Y_\alpha(i) \ar[r]^-{Y_\alpha(a)} & Y_\alpha(j)} \quad \text{is} \quad
  \left\{\!\!\!
    \begin{array}{cll}
      Y(a) & \text{if} & i \in V_\alpha \text{ and } j \in V_\alpha \\    
      0 & \text{if} & i \notin V_\alpha \text{ or } j \notin V_\alpha\,.
    \end{array}
  \right.
  \end{gathered}
\end{displaymath}
Note that $Y_0=0$ since $V_0=\emptyset$ and that $Y_\lambda=Y$ since $V_\lambda=Q_0$. For ordinals $\alpha\leqslant\beta\leqslant\lambda$ we define a morphism $g_{\alpha\beta} \colon Y_\beta \to Y_\alpha$ as follows:
\begin{prt}
\item[--] If $i \in V_\alpha$ ($\subseteq V_{\beta}$ by \lemref{Ascending}), then $Y_\beta(i)=Y(i)=Y_\alpha(i)$ and we set $g_{\alpha\beta}(i)=1_{Y(i)}$. 

\item[--] If $i \notin V_\alpha$ then $Y_\alpha(i)=0$ and we set $g_{\alpha\beta}(i)=0$.
\end{prt}
To see that $g_{\alpha\beta}$ really is a morphism of quiver representations, it must be argued that for every arrow \mbox{$a \colon\! i \to j$} in $Q$, the following diagram is commutative:
\begin{equation}
  \label{eq:Yalpha}
  \begin{gathered}
  \xymatrix@C=3pc{
    Y_{\beta}(i) \ar[r]^-{g_{\alpha\beta}(i)} \ar[d]_-{Y_{\beta}(a)} & Y_{\alpha}(i) \ar[d]^-{Y_{\alpha}(a)} 
    \\
    Y_{\beta}(j) \ar[r]^-{g_{\alpha\beta}(j)} & Y_{\alpha}(j).    
  }
  \end{gathered}  
\end{equation}
If $j \notin V_{\alpha}$, then $Y_{\alpha}(j)=0$ and \eqref{Yalpha} is obviously commutative. Assume that $j \in V_{\alpha}$ ($\subseteq V_\beta$). If we do have an arrow \mbox{$a \colon\! i \to j$} in $Q$, it follows from \corref{no-arrow} that we must have~$i \in V_\alpha$. In this situation, the diagram \eqref{Yalpha} looks as follows, and it is clearly commutative:
\begin{displaymath}
  \xymatrix@C=3pc{
    Y(i) \ar[r]^-{1_{Y(i)}} \ar[d]_-{Y(a)} & Y(i) \ar[d]^-{Y(a)}
    \\
    Y(j) \ar[r]^-{1_{Y(j)}} & Y(j).    
  }
\end{displaymath}
It is not hard to see that the hereby constructed system 
$\{g_{\alpha\beta} \colon Y_\beta \to Y_\alpha\}_{\alpha \leqslant \beta \leqslant \lambda}$ is a conti\-nuous inverse $\lambda$-sequence in $\Rep{Q}{\cM}$, see \ref{inverse-lambda-sequence}, and as already noted we have $Y_0=0$ and $Y_\lambda=Y$. We will show that if $Y \in \Rep{Q}{\cB}$, then this system is a $\upPhi(\cA)^\perp$-cofiltration (of $Y$), i.e.~the representation $K_\alpha:=\Ker{g_{\alpha,\alpha+1}}$ belongs to $\upPhi(\cA)^\perp$ for all $\alpha<\lambda$. Note that
\begin{displaymath}
  \begin{gathered}
  K_\alpha(i) = \Ker{(g_{\alpha,\alpha+1}(i))} =
  \left\{
    \begin{array}{cl}
      Y(i) & \text{if} \quad i \in V_{\alpha+1}\smallsetminus V_\alpha \\
       0 & \text{otherwise}    
    \end{array}
  \right.
  \end{gathered}
  \qquad (i \in Q_0).
\end{displaymath}
We claim that for every arrow \mbox{$a \colon\! i \to j$} in $Q$, the morphism $K_\alpha(a) \colon K_\alpha(i) \to K_\alpha(j)$ is zero. Indeed, if $j \notin V_{\alpha+1}\smallsetminus V_\alpha$, then $K_\alpha(j)=0$ and hence $K_\alpha(a)$ is zero. If $j \in V_{\alpha+1}\smallsetminus V_\alpha \subseteq V_{\alpha+1}$ then, if we do have an arrow \mbox{$a \colon\! i \to j$} in $Q$, it follows from \corref{no-arrow} that $i \in V_\alpha$ and hence $i \notin V_{\alpha+1}\smallsetminus V_\alpha$. Thus one has $K_\alpha(i)=0$, and therefore $K_\alpha(a)$ is also zero in this case. It follows that
\begin{displaymath}
  K_\alpha \,=\, \textstyle\prod_{i \in V_{\alpha+1}\smallsetminus V_\alpha}\s_i(Y(i))\;.
\end{displaymath}
Now, if $Y \in \Rep{Q}{\cB}$, then each $Y(i)$ belongs to $\cB$, and consequently one has
\begin{displaymath}
  \s_i(Y(i)) \in \s_*(\cB) \subseteq ({}^\perp\s_*(\cB))^\perp = \upPhi(\cA)^\perp
\end{displaymath}
for all $i \in Q_0$. Since $\upPhi(\cA)^\perp$ is closed under products in $\cM$, it follows that $K_\alpha \in \upPhi(\cA)^\perp$.

  \proofoftag{b} Dual to (a). %\enlargethispage{4ex}
\end{proof}

At this point, the proofs of Theorems A and B from the Introduction are simply a matter of collecting the appropriate references.

\begin{proof}[Proof of Theorem A]
  Since $Q$ is left rooted, \thmref{4xcotorsion}(a) yields that $(\upPhi(\cA),\Rep{Q}{\cB})$ is a cotorsion pair in $\Rep{Q}{\cM}$, where $\upPhi(\cA)$ is given in \dfnref{classes}. It follows from \prpref{values}(a) that this class $\upPhi(\cA)$ equals the class from the Introduction, which is denoted by the same symbol. The assertions about $(\upPhi(\cA),\Rep{Q}{\cB})$ being hereditary or being generated by a set follow from \prpref{hereditary} and \rmkref{generated}.
\end{proof}

\begin{proof}[Proof of Theorem B]
  Follows from \thmref{4xcotorsion}(b), \prpref{values}(b), \prpref{hereditary}, and \rmkref{generated}; cf.~the proof of Theorem A above.
\end{proof}

\section*{Acknowledgement}
  We thank the referee for a careful reading of the paper and for useful comments.

%\enlargethispage{5ex}

\bibliographystyle{amsplain}
\bibliography{+references}

\end{document}